\documentclass[a4paper, 10pt, reqno, english, ones]{amsart}  
\usepackage[T1]{fontenc}

\usepackage[
    a4paper,
    left=3.0cm,
    right=3.0cm,
    top=3.5cm,
    bottom=3.5cm
]{geometry}

\usepackage{txfonts}

\usepackage{mathrsfs}
\usepackage{color}
\usepackage[utf8]{inputenc}
\usepackage[dvipsnames]{xcolor}
\usepackage[colorlinks=true,urlcolor={YellowOrange},linkcolor={YellowOrange},citecolor={YellowOrange}]{hyperref}  
\usepackage{float}
\usepackage{amsfonts,amssymb,amsmath}
\usepackage{amsthm}    
\usepackage{url}
\usepackage{paralist}
\usepackage{tikz}
    \usetikzlibrary{decorations.markings,arrows,shapes.callouts}
\usepackage{rotating} 
\usepackage{verbatim}
\usepackage{tikz-cd}
\usepackage{mathtools}
\usepackage{dsfont}
\usepackage{amsrefs}
\usepackage{thmtools}
\usepackage{thm-restate}

\newtheorem{theorem}{Theorem}[section]
\newtheorem{proposition}[theorem]{Proposition}
\newtheorem{lemma}[theorem]{Lemma} 
\newtheorem{conjecture}[theorem]{Conjecture}

\newtheorem{corollary}[theorem]{Corollary} 

\theoremstyle{definition}
\newtheorem{definition}[theorem]{Definition}
\theoremstyle{remark}

\def\keywords{\xdef\@thefnmark{}\@footnotetext}

\tikzcdset{row sep/my_size/.initial=-2mm}
\tikzcdset{row sep/my_size_small/.initial=-3mm}
\newcommand{\xDownarrow}[1]{%
  {\left\Downarrow\vbox  to #1{}\right.\kern-\nulldelimiterspace}
}

\begin{document}

\title[]{Multi-graded generic initial ideals, regularity, and the optimal colorful fractional Helly theorem for $d$-Leray complexes}

\author[]{Daniel McGinnis} 
\address{Discrete Mathematics Group, Institute for Basic Science (IBS), Daejeon,
South Korea}
\email{dmcginnis@ibs.re.kr}




\begin{abstract}
    A celebrated result of Bayer and Stillman from 1987 states that for a homogeneous ideal $I$ of a polynomial ring $S$, the regularities of $S/I$ and $S/\textrm{GIN}(I)$ are the same under the reverse lexicographic monomial ordering, where $\textrm{GIN}(I)$ is the generic initial ideal. If $R$ is a polynomial ring whose variables are subdivided into disjoint blocks of variables $X_1,\dots,X_c$, there is  a natural multi-grading on $R$, and one can analogously define a multi-graded version of the generic initial ideal for any multi-homogeneous ideal $I$ of $R$. However, the full strength of the Bayer--Stillman Theorem fails in the multi-graded setting; there are multi-homogeneous ideals $I$ such that the regularities are not preserved after passing to the multi-graded generic initial ideal no matter the choice of monomial ordering.

    We prove lower bounds on the regularity of $R/I$ in terms of \textit{almost regular sequences} of the multi-graded generic initial ideal of $I$ restricted to each block of variables. Again, we use the reverse lexicographic monomial ordering, but interestingly, the lower bound result requires a particular choice of ordering on the variables. 
    
    As an application, we prove the \textit{optimal fractional Helly theorem for $d$-Leray simplicial complexes}, a problem stemming from the work of Kim in 2017.
\end{abstract}

\maketitle

\setcounter{tocdepth}{1}


\section{Introduction}

Let $S=\Bbbk [x_1,\dots,x_n]$ be a polynomial ring with the standard $\mathbb{N}$-grading (throughout $\mathbb{N}$ will be understood to contain $0$) over an infinite field $\Bbbk$. Given a homogeneous ideal $I$ of $S$, one can associate the corresponding monomial ideal $\textrm{in}(I)$ when a monomial order is specified on $S$. See  Eisenbud \cite[Chapter 15]{eisenbudBook} for a detailed account of monomial orders and initial ideals. Since monomial ideals are typically easier to work with, it is desirable to determine properties of $I$ that are preserved after taking the initial ideal. For instance, the Hilbert series of $I$ and $\textrm{in}(I)$ coincide.

An important invariant of a homogeneous ideal $I$ of $S$ is the \textit{graded Betti numbers} $\beta_{i,j}(S/I)$, defined by 
\[
\beta_{i,j} = \textrm{dim}_\Bbbk\textrm{Tor}_i(\Bbbk ,S/I)_j.
\]
An equivalent definition is that $\beta_{i,j}$ is the number of copies of $S(-j)$ in the $i$'th homological degree of the minimal free resolution of $S/I$. The graded Betti numbers can also be defined in the exact same way for finitely-generated $S$-modules, see Peeva \cite{PeevaBook} for details on this subject.

The \textit{regularity} of $S/I$ (or more generally a finitely-generated $S$-module), denoted by $\textrm{reg}(S/I)$, is $\max\{j \mid \beta_{i,i+j} \neq 0 \textrm{ for some }i\}$. In general, one has that $\textrm{reg}(S/I)\leq \textrm{reg}(S/\textrm{in}(I))$, however, under additional assumptions, we have equality. This follows from a celebrated result of Bayer and Stillman. We review the necessary terminology in Section \ref{sec:GIN and RevLex}.

\begin{theorem}[Bayer--Stillman \cite{Bayer1987Criterion}]\label{thm:BayerStillman}
    Let $I$ be a homogeneous ideal of $S$ with the reverse lexicographic monomial order. There is a Zariski open set $U\subset \textrm{GL}_{\Bbbk }(n)$ and an ideal $J$ such that $J=\textrm{in}(uI)$ for all $u\in U$, and $\textrm{reg}(J) = \textrm{reg}(I)$.
\end{theorem}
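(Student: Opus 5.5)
The plan has two parts: Galligo's theorem for the existence of the generic initial ideal, and the Bayer--Stillman criterion for regularity via generic linear forms and the revlex order.

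\emph{Existence of $J$.} Fix the graded pieces $I_d$ and Plücker-type coordinates. For each degree $d$, the initial space $\textrm{in}(uI)_d$ is determined by which maximal minor is the first nonvanishing one in the revlex-sorted list of monomials. Here the minors are taken of the matrix whose rows express a basis of $uI_d$ in terms of monomials. These minors are polynomials in the entries of $u$, so there is a nonempty Zariski open $U_d$ on which $\textrm{in}(uI)_d$ is constant; this uses that $\Bbbk$ is infinite. Since a Gröbner basis lives in bounded degree, finitely many $U_d$ suffice, and their intersection is the desired $U$. Call the resulting ideal $J$. A standard Borel-fixedness argument, using that $J$ is stable under the upper-triangular Borel subgroup, is not needed for the statement itself but will be used below.

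\emph{Regularity via almost regular sequences.} Call a sequence of linear forms $\ell_1,\dots,\ell_n$ \emph{almost regular} for $M=S/I$ if, for each $i$, the module
\[
\bigl((I,\ell_1,\dots,\ell_{i-1}):\ell_i\bigr)/(I,\ell_1,\dots,\ell_{i-1})
\]
has finite length. I will prove the Bayer--Stillman criterion: for any almost regular sequence, $\textrm{reg}(S/I)$ is the maximum over $i$ of the top degree of these finite-length modules, adjusted by the appropriate shift, together with the degree-$0$ contribution. The proof goes by induction on $n$. The base case is a finite-length module, whose regularity is its top degree. For the inductive step, use the exact sequence
\[
0\to (0:_M\ell)(-1)\to M(-1)\xrightarrow{\ell} M\to M/\ell M\to 0
\]
together with local cohomology, noting that $\textrm{reg}$ is computed from $H^i_{\mathfrak m}$. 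When $(0:_M\ell)$ has finite length, one gets $\textrm{reg}(M)=\max(\textrm{reg}(0:_M\ell),\textrm{reg}(M/\ell M))$.

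Two facts then connect $I$ and $J$. First, for $u\in U$, generic coordinates make $x_n,x_{n-1},\dots,x_1$ almost regular for $S/uI$, since for generic $u$ the images of the variables are generic linear forms. Second, for revlex, the basic identities
\[
\textrm{in}(I+(x_n)) = \textrm{in}(I)+(x_n),\qquad \textrm{in}(I:x_n)=\textrm{in}(I):x_n
\]
hold, and they iterate modulo $x_n$ because $S/(x_n)$ inherits the revlex order. Together these give equality of Hilbert functions of the successive colon-quotient modules for $uI$ and for $J$. Since $J$ is Borel-fixed, the variables $x_n,\dots,x_1$ are also almost regular for $S/J$. The criterion then gives the same regularity number on both sides, so $\textrm{reg}(J)=\textrm{reg}(uI)=\textrm{reg}(I)$.

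The main obstacle is the regularity criterion itself. The delicate point is that the top degrees of the colon modules must match exactly, not just be bounded. Dimension equality of graded pieces, obtained from the revlex colon identities, is precisely what gives this match. I would establish that identity first by a direct leading-term computation: a revlex leading term divisible by $x_n$ forces the whole polynomial to be divisible by $x_n$.
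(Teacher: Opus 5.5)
Your argument is correct. Its skeleton matches the route the paper points to: the paper cites this theorem rather than proving it, but Proposition~\ref{prop:SS AlmostReg}, Proposition~\ref{prop:In Hilbert}, and the formula $\textrm{reg}(M)=\max\{s_1,\dots,s_n,s(M/\mathfrak{m}M)\}$ drawn from Theorem~\ref{thm:Ext Betti Gen} are exactly your ``Borel-fixed $\Rightarrow$ almost regular'', ``revlex colon identities'', and ``regularity criterion''.

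The real difference is how you prove the criterion. You use local cohomology and the lemma $\textrm{reg}(M)=\max\{\textrm{reg}(0:_M\ell),\textrm{reg}(M/\ell M)\}$ when $(0:_M\ell)$ has finite length, iterated down the sequence. This lemma is right: both inequalities come from the sequence $0\to (M/(0:_M\ell))(-1)\xrightarrow{\ell}M\to M/\ell M\to 0$ together with $H^i_{\mathfrak m}(M/(0:_M\ell))\cong H^i_{\mathfrak m}(M)$ for $i\geq 1$. It yields $\textrm{reg}(S/I)=\max\{s_1,\dots,s_n,0\}$ with no shift. The paper, following Aramova--Herzog, instead runs the long exact sequence (\ref{eq:LES}) in Koszul homology and uses $H_i(x_n,\dots,x_1;M)\cong\textrm{Tor}_i(\Bbbk,M)$.

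Your route is shorter if only the regularity number is wanted. The Koszul route gives more:
\begin{itemize}
\item the extremal Betti positions and values (Theorem~\ref{thm:GINextremalBettEq});
\item lower bounds for non-linear almost regular sequences such as $x_i^{d_i}$ (Corollary~\ref{cor:Reg Lower});
\item explicit cycle representatives $a\mathbf{e}_{\{i,\dots,n\}}+\cdots$ (Propositions~\ref{prop:Kosz Hom rep} and~\ref{prop:Kosz Hom iso}) that can be multiplied across color blocks via Lemma~\ref{lem:Koszul Build}.
\end{itemize}
The last item is what drives Theorem~\ref{thm:Alg Main}, and local cohomology does not supply it.

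Two small repairs are needed.
\begin{itemize}
\item Almost regularity of $x_n,\dots,x_1$ on $S/uI$ for all $u\in U$ does not follow from ``generic linear forms'', since $U$ is only the open set where $\textrm{in}(uI)=J$. You do not need a separate argument, though: equality of the Hilbert functions of the colon modules transfers finite length from $S/J$ to $S/uI$.
\item ``A Gr\"obner basis lives in bounded degree'' must be made uniform in $u$. Say instead that $J=\bigoplus_d J_d$ is an ideal generated in degrees $\leq D$. Then for $u\in\bigcap_{d\leq D}U_d$ one has $J\subseteq\textrm{in}(uI)$ with equal Hilbert functions, hence equality.
\end{itemize}
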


The ideal $J$ in Theorem \ref{thm:BayerStillman} is called the \textit{generic initial ideal} of $I$ and we denote it as $\textrm{GIN}(I)$. A strengthening of Theorem \ref{thm:BayerStillman} is that $I$ and $\textrm{GIN}(I)$ have the same \textit{extremal Betti numbers}, see  Definition \ref{def:ext betti} and Theorem \ref{thm:GINextremalBettEq}. 

Such results on homogeneous ideals have applications on the combinatorics of simplicial complexes via the Stanley-Reisner ring $S/I_\Delta$. Here $\Delta$ is a simplicial complex on the vertex set $[n]=\{1,\dots,n\}$, and $I_\Delta$ is the monomial ideal generated by monomials $x_{i_1}\dots,x_{i_k}$ where $\{i_1,\dots,i_k\}$ is not a face of $\Delta$. For example, algebraic (symmetric) shifting is a technique developed by Kalai that associates a simplicial complex $\Delta$ to a \textit{shifted complex} $\Delta^s$ that satisfies nice combinatorial properties (see Section \ref{sec:Alg Shifting}). For instance, $\Delta$ and $\Delta^s$ have the same face numbers. See also the survey \cite{Kalai2002AlgebraicShifting} for more information and history. Essentially, the construction is to apply the generic initial ideal to $I_\Delta$; we refer to the book \cite{herzog2011monomial} for a very nice detailed account of algebraic shifting. We also give a brief overview in Section \ref{sec:Alg Shifting}. Theorem \ref{thm:BayerStillman} can be used to show that $S/I_\Delta$ and $S/I_{\Delta^s}$ have the same regularity. This gives a connection between the homology of induced complexes of $\Delta$ and $\Delta^s$ via Hochster's formula:

\begin{theorem}[Hochster's Formula \cite{HochstersFormula}]\label{thm:HochstersFormula}

Let $\Delta$ be a simplicial complex on the vertex set $[n]$. Then
\[
\beta_{i,j}(S/I_\Delta) = \sum_{W\in {[n] \choose j}} \textrm{dim}_\Bbbk\tilde{H}_{j-i-1}(\Delta[W];\Bbbk ).
\]
    
\end{theorem}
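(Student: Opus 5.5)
The plan is the standard $\mathbb{Z}^n$-graded argument: compute $\textrm{Tor}_i(\Bbbk, S/I_\Delta)$ with the Koszul complex on $x_1,\dots,x_n$, and split that computation into squarefree multidegrees. Since $\textrm{Tor}$ is symmetric, we may resolve $\Bbbk$ by the Koszul complex $K_\bullet = K(x_1,\dots,x_n)$ and tensor it with $S/I_\Delta$. This gives a complex of $\mathbb{Z}^n$-graded $\Bbbk$-vector spaces whose $i$-th homology is $\textrm{Tor}_i(\Bbbk,S/I_\Delta)$. Because $I_\Delta$ is a monomial ideal, everything is $\mathbb{N}^n$-graded, so $\beta_{i,j}$ is the sum of the multigraded Betti numbers $\beta_{i,\mathbf{a}}$ over all $\mathbf{a}\in\mathbb{N}^n$ with $|\mathbf{a}|=j$.

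First I reduce to squarefree degrees. The goal is to show that $\beta_{i,\mathbf{a}}=0$ unless $\mathbf{a}$ is a $0/1$ vector, and that for $\mathbf{a}=\chi_W$ we have $\beta_{i,\chi_W}=\dim \tilde H_{|W|-i-1}(\Delta[W];\Bbbk)$. The sum over $W\in\binom{[n]}{j}$ in Theorem \ref{thm:HochstersFormula} then follows at once.

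For the reduction, I would use the alternative computation $\textrm{Tor}_i(\Bbbk,S/I_\Delta)_{\mathbf{a}} = H_i(K_\bullet\otimes S/I_\Delta)_{\mathbf{a}}$. The degree-$\mathbf{a}$ part of $K_i\otimes S/I_\Delta$ has a basis of pairs $(F,m)$. Here $F\subseteq[n]$ with $|F|=i$, and $m$ is a standard monomial of multidegree $\mathbf{a}-\chi_F$, meaning $\textrm{supp}(m)\in\Delta$. If some coordinate $a_k\ge 2$, I would give a contracting homotopy that toggles $k$ in and out of $F$, i.e.\ multiply or divide by $x_k$. This works because $x_k$ stays in the support whichever way we toggle, so whether $m$ is standard does not change. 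Hence that strand is acyclic.

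For $\mathbf{a}=\chi_W$, basis elements are pairs $(F,m)$ with $F\subseteq W$ and $m=x_{W\setminus F}$, subject to $W\setminus F\in\Delta$. Setting $\sigma=W\setminus F$, the strand becomes a complex indexed by faces $\sigma$ of $\Delta[W]$ with $|\sigma|=|W|-i$, and the Koszul differential becomes, up to sign, the simplicial coboundary. I would identify it with the augmented cochain complex $\tilde C^{|W|-i-1}(\Delta[W];\Bbbk)$, which gives $H_i=\tilde H^{|W|-i-1}(\Delta[W];\Bbbk)$. Over a field this has the same dimension as $\tilde H_{|W|-i-1}$.

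The main obstacle is bookkeeping rather than ideas. The signs in the Koszul differential have to match the simplicial coboundary, which amounts to ordering the vertices of $W$ and checking that complement-taking intertwines the two sign conventions. The homotopy in the non-squarefree case also needs care to confirm that it respects membership in $\Delta$. I would check the squarefree identification first, on small cases such as $\sigma=\emptyset$ (the augmentation, $i=|W|$), and then do the homotopy argument.
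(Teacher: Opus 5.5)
Your proposal is correct. The paper gives no proof of this statement and simply cites Hochster, so there is no competing route to compare against. Your argument is the standard one (essentially Hochster's own, and the proof in Bruns--Herzog):
\begin{enumerate}
\item compute $\textrm{Tor}$ with the Koszul complex on $S/I_\Delta$;
\item show the strands of non-squarefree degree are acyclic;
\item identify the strand of degree $\chi_W$ with the augmented cochain complex of $\Delta[W]$, with homological degree $i$ sitting in cochain degree $|W|-i-1$.
\end{enumerate}
Passing from $\tilde H^{\bullet}$ to $\tilde H_{\bullet}$ over a field is fine.

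Two points of precision for the write-up.

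\textbf{The homotopy.} It should go in one direction only. For $a_k\ge 2$, put $h(\mathbf{e}_F\otimes m)=\pm\,\mathbf{e}_{F\cup\{k\}}\otimes m/x_k$ if $k\notin F$, and $h=0$ if $k\in F$; with the Koszul sign this gives $\partial h+h\partial=\mathrm{id}$. Equivalently, the $\mathbf{a}$-strand is the mapping cone of multiplication by $x_k$ from the $(\mathbf{a}-\chi_{\{k\}})$-strand to the $\mathbf{a}$-strand of the Koszul complex on the remaining variables. That map is an isomorphism of complexes, because the $x_k$-exponent of the module part is at least $1$ on both sides, so supports and standardness are unchanged. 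Hence the cone is acyclic.

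\textbf{The signs.} Rescale $\mathbf{e}_F\otimes x_{W\setminus F}$ by $(-1)^{\sum_{l\in F}|\{w\in W:\,w<l\}|}$. This turns the Koszul differential into the simplicial coboundary exactly, since for $l\in F$ the counts $|\{p\in F:p<l\}|$ and $|\{p\in W\setminus F:p<l\}|$ add up to $|\{w\in W:w<l\}|$.

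Finally, your argument proves the finer $\mathbb{N}^n$-graded statement: concentration in squarefree degrees, and the $\chi_W$-strand computed by $\Delta[W]$. This is the form the paper actually invokes at the end of the proof of Theorem \ref{thm:Betti lower bound}, where a single graded piece of Koszul homology is identified with $\tilde H_{i-1}(\Delta)$.
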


A simplicial complex is said to be \textit{$d$-Leray} if $\tilde{H}_k(\Delta[W])=0$ for all $k\geq d$ and for all subsets of vertices $W$ (here we are taking homology with $\mathbb{Q}$ coefficients). By Hochster's formula, a simplicial complex is $d$-Leray if and only if its Stanley-Reisner ring is $d$-regular. Therefore, we have that $\Delta$ is $d$-Leray if and only if $\Delta^s$ is $d$-Leray. This, together with the fact that algebraic shifting preserves the face numbers,  can be used to prove the following upper bound theorem for $d$-Leray complexes, see Lemma \ref{lem: 1- color comb main} for a proof. Here, $f_k(\Delta)$ is the number of $k$-dimensional faces of $\Delta$.

\begin{theorem}\label{thm:d-leray-upbound}
    Let $\Delta$ be a $d$-Leray simplicial complex with vertex set $[n]$ and whose largest face has size $r$. Then $f_d(\Delta)\leq {n\choose d+1}-{n-r+d \choose d+1}$.
\end{theorem}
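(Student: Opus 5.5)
Since $\Delta$ and $\Delta^s$ have the same face numbers and $\Delta^s$ is again $d$-Leray, the plan is to replace $\Delta$ by its algebraic shift and bound $f_d$ of a shifted $d$-Leray complex combinatorially. We also need the size of the largest face to be controlled. Since $f_{r}(\Delta)=0$ and face numbers agree, $\Delta^s$ has no face of size $r+1$ either. So we may assume $\Delta$ is shifted with respect to the order $1<2<\dots<n$: if $F\in\Delta$, $i\in F$, $j<i$, $j\notin F$, then $(F\setminus\{i\})\cup\{j\}\in\Delta$. We may also assume $\Delta$ is $d$-Leray with all faces of size at most $r$.

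\textbf{Step 1 (Leray condition on a shifted complex).} For a shifted complex, the homology of an induced subcomplex is controlled by the link/star of vertex $1$. In particular, $\Delta[W]$ for $W\ni 1$ is a cone over vertex $1$ modulo the faces $G\subseteq W\setminus\{1\}$ with $G\cup\{1\}\notin\Delta$. By the standard description, $\tilde H_{k}$ of a shifted complex has a basis indexed by faces $G$ with $1\notin G$, $|G|=k+1$, and $G\cup\{1\}\notin\Delta$. Applying this to induced subcomplexes, whose vertex $\min W$ plays the role of $1$, the $d$-Leray condition says: whenever $G\in\Delta$ and $|G|\ge d+1$, then $G\cup\{m\}\in\Delta$ for every $m<\min G$. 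Equivalently, every face of size $\ge d+1$ can be extended downward.

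\textbf{Step 2 (counting).} Let $F$ be a face of size $d+1$ with elements $a_1<\dots<a_{d+1}$. By Step 1, $F\cup\{1,\dots,a_1-1\}$ is a face, of size $d+a_1$. The bound $|{\rm face}|\le r$ then forces $a_1\le r-d$. So every $d$-face $F$ satisfies $\min F\le r-d$. Counting all $(d+1)$-subsets of $[n]$ with minimum at most $r-d$ gives
\[
f_d(\Delta)\le \binom{n}{d+1}-\binom{n-(r-d)}{d+1},
\]
which is exactly the stated bound.

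\textbf{Main obstacle.} The delicate step is Step 1: justifying that the $d$-Leray property of a shifted complex yields the downward-extension property for all faces of size $\ge d+1$, and not only for the top-dimensional ones. I would prove it by induction on $\min G$. If $G\in\Delta$ with $|G|\ge d+1$ and $m<\min G$ but $G\cup\{m\}\notin\Delta$, choose $m$ maximal and set $W=G\cup\{m\}$. By shiftedness and the maximality of $m$, every proper subset of $W$ is a face. (A subset $(G\setminus\{g\})\cup\{m\}$ is obtained from $G$ by a downward shift, hence is a face.) So $\Delta[W]$ is the boundary of a simplex of dimension $|G|$, whose $\tilde H_{|G|-1}\neq0$ with $|G|-1\ge d$. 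This contradicts the $d$-Leray property. This direct argument actually avoids needing the full homology description, so it is what I would write.
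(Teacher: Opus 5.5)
Your proof is correct and follows essentially the same route as the paper, which proves the uncolored Lemma~\ref{lem: 1- color comb main}: pass to the algebraic shift (which preserves face numbers and the $d$-Leray property), then use shiftedness together with the fact that an induced boundary of a simplex of dimension at least $d+1$ is forbidden, so that any face of size $d+1$ avoiding a fixed block of $r-d$ vertices would grow into a face of size larger than $r$. The differences are cosmetic. You use the downward shifting convention, whereas the paper's $\Delta^s$ is shifted toward larger labels, so its block is $\{n-(r-d)+1,\dots,n\}$. You also grow the given face directly instead of first shifting it to $\{n-r,\dots,n-r+d\}$. Finally, the maximality of $m$ in your Step 1 is not needed, since every facet of $G\cup\{m\}$ is a face for any $m<\min G$.
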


The motivation for Theorem \ref{thm:d-leray-upbound} comes from a conjecture of Katchalski and Perles whose statement is the same as Theorem \ref{thm:d-leray-upbound} except for complexes $\Delta$ that arise as the nerve of a family of convex sets in $\mathbb{R}^d$. Their conjecture also includes a bound on the number of $k$-dimensional faces for $k\geq d$. The upper bound is achieved by taking the nerve of the family of convex sets in $\mathbb{R}^d$ consisting of $r-d$ copies of $\mathbb{R}^d$ and $n-(r-d)$ hyperplanes in general position. This upper bound theorem for convex sets was proved independently by Kalai \cite{kalai1984intersection} and Eckhoff \cite{eckhoff1985Upper}, and the proof of Kalai initiated the method of algebraic shifting. By the Nerve Lemma, the nerve of a family of convex sets in $\mathbb{R}^d$ is, in particular, $d$-Leray. An explicit proof of Theorem \ref{thm:d-leray-upbound} using the algebraic shifting technique also appears in \cite{Bulavka2021Optimal}. 

One of the main consequences of Theorem \ref{thm:d-leray-upbound} is the \textit{optimal fractional Helly Theorem} for $d$-Leray complexes; this is a cornerstone result in discrete geometry when $\Delta$ comes from the nerve of a family of convex sets in $\mathbb{R}^d$. See \cite{amenta2017Helly} for a survey on Helly-type theorems.

\begin{theorem}[Optimal fractional Helly Theorem]\label{thm:frac helly}
    Let $\alpha \in (0,1]$, $d\geq 1$, and let $\Delta$ be a $d$-Leray simplicial complex on $n$ vertices. If $f_d(\Delta)\geq \alpha {n \choose d+1}$, then the largest face of $\Delta$ has size at least $(1-(1-\alpha)^{1/(d+1)})n$.
\end{theorem}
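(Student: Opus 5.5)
We derive the statement from the upper bound theorem, Theorem~\ref{thm:d-leray-upbound}. Let $r$ be the size of the largest face of $\Delta$. By Theorem~\ref{thm:d-leray-upbound} and the hypothesis,
\[
\alpha {n\choose d+1} \le f_d(\Delta) \le {n\choose d+1}-{n-r+d\choose d+1},
\]
and rearranging gives ${n-r+d\choose d+1}\le (1-\alpha){n\choose d+1}$. If $\alpha=1$ this forces $n-r+d<d+1$, i.e.\ $r\ge n$, so $r=n$ and we are done. Assume from now on that $\alpha<1$, and write $m=n-r$. We must show that $m\le (1-\alpha)^{1/(d+1)} n$.

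The key step is an elementary comparison of the two binomial coefficients:
\[
\frac{{m+d\choose d+1}}{{n\choose d+1}}=\prod_{i=0}^{d}\frac{m+d-i}{n-i}.
\]
Suppose for contradiction that $m> (1-\alpha)^{1/(d+1)} n$, so in particular $m>0$. Since $m\le n$, we have $d-i\ge -i$, and for $0\le i\le d$ with $n>i$,
\[
\frac{m+d-i}{n-i}\ge\frac{m-i}{n-i}\cdot\ \text{(too weak)},
\]
so instead we use the sharper inequality $\frac{m+d-i}{n-i}\ge \frac{m}{n}$. This is equivalent to $n(m+d-i)\ge m(n-i)$, i.e.\ $n(d-i)+mi\ge 0$, which holds for $0\le i\le d$. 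Some care is needed when $n\le d$: in that case ${n\choose d+1}=0$, so $f_d(\Delta)=0$, and the hypothesis $f_d(\Delta)\ge\alpha{n\choose d+1}$ is vacuous; then the claimed bound is only meaningful if it is trivial, and we handle it separately. For $n\ge d+1$ all denominators are positive. Taking the product over $i$ gives
\[
{m+d\choose d+1}\ge \left(\frac{m}{n}\right)^{d+1}{n\choose d+1}>(1-\alpha){n\choose d+1},
\]
which contradicts the rearranged bound. Hence $m\le (1-\alpha)^{1/(d+1)}n$, i.e.\ $r\ge (1-(1-\alpha)^{1/(d+1)})n$.

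The degenerate case $n\le d$ needs attention. There the hypothesis gives no information, so the conclusion must come from elsewhere. The natural observation is that a $d$-Leray complex on $n\le d$ vertices need not be a simplex: for example, its boundary has $\tilde H_{n-2}\neq 0$, and $n-2<d$. So the statement, read literally, would require $n\ge d+1$. I would either assume this implicitly, as is standard, or note that for $n\le d$ the hypothesis with $\alpha>0$ is understood to require $f_d>0$, which is impossible. Apart from this degenerate-case bookkeeping, the argument is straightforward once Theorem~\ref{thm:d-leray-upbound} is available; the real depth lies in that theorem, which we are allowed to assume.
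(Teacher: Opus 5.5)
Your proof is correct and is the standard deduction from Theorem~\ref{thm:d-leray-upbound}, due to Kalai, which the paper cites rather than reproduces: rearrange to ${n-r+d\choose d+1}\le(1-\alpha){n\choose d+1}$ and compare factorwise via $\frac{m+d-i}{n-i}\ge\frac{m}{n}$. Your caveat about $n\le d$ is also right, since the boundary of a simplex on $n\le d$ vertices violates the literal statement; and for $n\ge d+1$ the hypothesis with $\alpha>0$ gives $f_d(\Delta)>0$, hence $r\ge d+1$, so you only apply Theorem~\ref{thm:d-leray-upbound} where its bound is meaningful (for $r<d$ its right-hand side can be negative).
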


Theorem \ref{thm:frac helly} is optimal in the sense that the constant $(1-(1-\alpha)^{1/(d+1)})$ in front of $n$ is best possible. The original Helly theorem is the statement of Theorem \ref{thm:frac helly} when $\alpha=1$ and $\Delta$ is the nerve of a family of convex sets. The deduction from Theorem \ref{thm:d-leray-upbound} to Theorem \ref{thm:frac helly} can be found in \cite{kalai1984intersection}. We note that a (non-optimal) fractional Helly Theorem for convex sets was first proven in  \cite{katchalski1979Problem}.

The content of this manuscript is motivated by the following conjectured \textit{colorful} analogue of Theorem \ref{thm:d-leray-upbound}. Let $\Delta$ be a simplicial complex whose vertex set is the disjoint union $V_1\dot{\cup}\cdots\dot{\cup}V_c$. For a tuple $\mathbf{t}\in \mathbb{N}^c$, we define the flag face number $f_{\mathbf{t}}(\Delta)$ to be the number of faces of $\Delta$ that have exactly $\mathbf{t}_j$ vertices in $V_j$ for each $j$.

\begin{conjecture}[\cite{Bulavka2021Optimal}]\label{conj:Upper Main}
    Let $\Delta$ be a $d$-Leray simplicial complex on the vertex set $V_1\dot{\cup}\cdots \dot{\cup} V_{d+1}$. If $n_j=|V_j|$ and the largest face of $\Delta[V_j]$ has size at most $r_j$ for each $j$, then
    \[
    f_{(1,\dots,1)}(\Delta) \leq n_1\cdots n_{d+1} - (n_1-r_1)\cdots (n_{d+1}-r_{d+1}).
    \]
\end{conjecture}

The optimality of the upper bound in Conjecture \ref{conj:Upper Main} follows from a construction of Kim \cite{kim2017note}, where Conjecture \ref{conj:Upper Main} is implicit. Since then, Conjecture \ref{conj:Upper Main} has garnered notable interest in the community. For instance, the statement of the conjecture has been reiterated several times in various research and survey papers, \cite{barany2022helly,patak2026hellytype,Bulavka2021Optimal,chakraborti2026colorful}. Conjecture \ref{conj:Upper Main} is shown to be true in the case that $\Delta$ is the nerve of a family of convex sets in $\mathbb{R}^d$ in \cite{Bulavka2021Optimal} by adapting the method of Kalai in \cite{kalai1984intersection}, and another proof appears in \cite{chakraborti2026colorful}; actually it is proven more generally for the class of $d$-collapsible simplicial complexes, which is contained in the class of $d$-Leray complexes (see \cite{martin2013intersection}). Again, an important consequence of Conjecture \ref{conj:Upper Main} is an optimal \textit{colorful} fractional Helly theorem.

\begin{conjecture}[\cite{Bulavka2021Optimal}]\label{conj:col frac helly}

Let $\Delta$ be a $d$-Leray simplicial complex on the vertex set $V_1\dot{\cup}\cdots \dot{\cup} V_{d+1}$. Let $n_j=|V_j|$ and $\alpha \in (0,1]$ and assume that $f_{(1,\dots,1)}(\Delta) \geq \alpha n_1\cdots n_{d+1}.$ Then for some $j$, $\Delta[V_j]$ contains a face of size at least $(1-(1-\alpha)^{1/(d+1)})n_j$.
    
\end{conjecture}

\noindent The deduction from Conjecture \ref{conj:Upper Main} to Conjecture \ref{conj:col frac helly} can be found in \cite{Bulavka2021Optimal}. In Section \ref{sec:Comb main sec}, we prove Conjecture \ref{conj:Upper Main} and hence, Conjecture \ref{conj:col frac helly} holds as well. The original colorful Helly theorem is the statement of Conjecture \ref{conj:col frac helly} when $\alpha=1$ and $\Delta$ is the nerve of a family of convex sets. The question of whether the variants of Helly's Theorem extend to a $d$-Leray version is a fruitful direction in the area. For instance, the colorful Helly theorem for $d$-Leray complexes is shown to hold in \cite{kalai2005topological}, and it has proven to be an influential result within the community. See also \cite{holmsen2016intersection} for a further generalization of this result. Thus, a solution to Conjecture \ref{conj:Upper Main}, being a common generalization to the colorful Helly theorem for $d$-Leray complexes and Theorem \ref{thm:d-leray-upbound}, is a natural next step in this line of research.

Since algebraic shifting can be used to prove Theorem \ref{thm:d-leray-upbound}, one may hope to prove Conjecture \ref{conj:Upper Main} by using a colorful analogue. \textit{Colored algebraic shifting} was defined in \cite{Babson2004Face} for simplicial complexes $\Delta$ on a vertex set $V_1\dot{\cup}\cdots \dot{\cup} V_{c}$. Colored algebraic shifting preserves the flag face numbers, but an example from \cite{Murai2008Betti} shows that it does not preserve the $d$-Leray property in general; the difficulties of applying colored algebraic shifting are similarly noted in \cite{Bulavka2021Optimal}. Thus, one would have to show a more subtle relationship between $\Delta$ and its colored algebraic shifting to prove Conjecture \ref{conj:Upper Main} using this technique. This is one of the main goals of this paper.

When working in this colored setting, we work over a multi-graded polynomial ring. Let 
\[
X_j=\{x_{1,j},\dots,x_{n_j,j} \}
\]
be sets of variables with the multi-grading $(0,\dots,1,\dots,0)\in \mathbb{N}^c$ (where $1$ is in the $j$'th position) for $1\leq j\leq c$. We often identify the variables with the vertices of $\Delta$. Let $R=\Bbbk [\bigcup_{j=1}^c X_j]$ be the multi-graded polynomial ring. Throughout, we will always take the reverse lexicographic monomial order (see Section \ref{sec:GIN and RevLex}) and in each block of variables, we always impose $x_{1,j} > \cdots > x_{n_j,j}$. We will impose further conditions on the ordering of the variables throughout, and we make it explicit when we do. Given a multi-homogeneous ideal $I\subset R$, we can apply a generic block diagonal matrix $u\in G=\textrm{GL}_{\Bbbk }(n_1)\times \cdots \times \textrm{GL}_{\Bbbk }(n_c)$ (see Theorem \ref{thm:color zariski open}) and take the initial ideal to obtain the \textit{$G$-generic initial ideal} $G\textrm{-GIN}(I)=\textrm{in}(uI)$. Note that $\textrm{in}(uI)$ is still multi-homogeneous. The colored algebraic shifting of $\Delta$, which we call $\Delta^{cs}$, is essentially obtained by applying the $G$-generic initial ideal to the Stanley-Reisner ideal $I_\Delta$ (see Section \ref{sec:Alg Shifting} for more details). 

The reason why colored algebraic shifting can not immediately be applied to prove Conjecture \ref{conj:Upper Main} using the previously known properties of colored algebraic shifting in an analogous way that algebraic shifting can be used to prove Theorem \ref{thm:d-leray-upbound}, is essentially due to the fact that the full strength of the Bayer--Stillman Theorem does not hold. This is why, as previously mentioned, the colored algebraic shifting of a $d$-Leray simplicial complex may not be $d$-Leray. Most attempts at generalizing the Bayer--Stillman Theorem in the multi-graded setting involve a notion of multi-graded Betti number and multi-graded regularity. The paper \cite{maclagan2004multigraded} is one of the first major contributions in this direction. For more recent developments in this vein, see \cite{bender2026bigraded}, and the references therein. Unfortunately, the known results of this form do not seem to have any bearing on Conjecture \ref{conj:Upper Main}.  However, in this paper, we only work with the usual notions of graded Betti numbers and regularity. One of the main results of this paper is to give a lower bound on the regularity of $R/I$ based on information from $R/G\textrm{-GIN}(I)$ (see Theorem \ref{thm:Alg Main}).

\subsection{Main results}

We present the main results of the paper here. Recall that $R=\Bbbk [\bigcup_{j=1}^c X_j]$ is a multi-graded polynomial ring. Our main algebraic result provides a lower bound on the regularity of $R/I$. For a multi-homogeneous ideal $I\subset R$ and an index $1\leq j\leq c$, we set $I_j=I\cap \Bbbk[X_j]$. For a finite length $R$-module $N$, we define $s(N)$ to be its top degree. For a multi-graded $R$-module, there is a corresponding $\mathbb{N}$-grading; it will be clear from context when we refer to the multi-grading or the $\mathbb{N}$-grading. The $G$-generic initial ideal here is taken with respect to the reverse lexicographic monomial order corresponding to a particular ordering on the variables, detailed in Section \ref{sec:multi GIN}.


\begin{restatable}{theorem}{AlgMain}\label{thm:Alg Main}
     Suppose $1\leq j_1 <\cdots < j_k\leq c$ and $d_{1,j_t}\geq \cdots \geq d_{i_{j_t},j_t}\geq 1 $ are integers for $1\leq t\leq k$. Take the reverse lexicographic monomial order where the variables $\{x_{1,{j_t}},\dots,x_{i_{j_t},{j_t}} \mid 1\leq t \leq k\}$ are greater than any variable in $\{x_{i_{j_t}+1,{j_t}},\dots,x_{n_{j_t},{j_t}} \mid 1\leq t \leq k\}$. Suppose $I\subset R$ is a multi-homogeneous ideal, and let $J=G\textrm{-GIN}(I)$.  Furthermore, assume that there exist monomials 
    \[
    a_t\in \left(J_{j_t} + (x_{i_{j_t}+1,j_t},\dots,x_{n_{j_t},j_t}):_{R_{j_t}} x_{i_{j_t},j_t}^{d_{i_{j_t},j_t}}\right)_{q_t}
    \]
    such that $a_1\cdots a_k\notin J+\left(x_{i_{j_t}+1,j_t},\dots,x_{n_{j_t},j_t} \mid 1\leq t\leq k\right)$ and $q_{t}+d_{i_{j_t}-1,j_t} > \textrm{reg}(R_{j_t}/J_{j_t})=\textrm{reg}(R_{j_t}/I_{j_t})$ if $i_{j_t}\geq 2$ for  $1\leq t\leq k$.
    Then 
    \[
    \textrm{reg}(R/I)\geq q_1+\cdots+q_k. 
    \] 
\end{restatable}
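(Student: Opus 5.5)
We would prove the inequality by passing to a finite-length quotient whose socle degree we can read off from $J$, and then comparing this with $\textrm{reg}(R/I)$. Let $L$ be the ideal generated by the linear forms $u^{-1}(x_{i_{j_t}+1,j_t}),\dots,u^{-1}(x_{n_{j_t},j_t})$ for $1\le t\le k$, together with all variables of the blocks $X_j$ with $j\notin\{j_1,\dots,j_k\}$. Here $u\in G$ is generic, so these are generic linear forms inside each block.

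The first step uses the standard revlex property. Since the variables $x_{i+1,j_t},\dots$ are the smallest in the chosen order, we have
\[
\textrm{in}(uI+(\text{last variables})) = J+(\text{last variables}),
\]
and the same holds after saturating or coloning by the smallest remaining variable. This is the multi-graded analogue of Bayer--Stillman's lemma, valid because $G$ acts blockwise and the chosen variable order makes the relevant variables last.

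Because regularity does not increase under quotienting by a generic almost-regular linear form, we get $\textrm{reg}(R/I)\ge \textrm{reg}(R/(I+L))$. Iterating the standard inequality $\textrm{reg}(M)\ge \textrm{reg}(M/\ell M)$ for $\ell$ almost regular on $M$ reduces us to the ring $R'=\bigotimes_t \Bbbk[x_{1,j_t},\dots,x_{i_{j_t},j_t}]$ with ideal $I'$ whose initial ideal is $J'=J+(\text{last variables})$ restricted to $R'$.

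We next reduce to finite length. We continue modding out by $x_{i_{j_t},j_t}$ in each block, but only after coloning by high powers. The hypothesis $a_t\in (J'_{j_t}:x_{i_{j_t},j_t}^{d_{i_{j_t},j_t}})$ with $q_t+d_{i_{j_t}-1,j_t}>\textrm{reg}(R_{j_t}/J_{j_t})$ is exactly what guarantees, by the single-block Bayer--Stillman theory (Theorem \ref{thm:BayerStillman} and the extremal Betti results, Theorem \ref{thm:GINextremalBettEq}), that $a_t$ represents a nonzero socle-type element of the local cohomology $H^0_{\mathfrak m_t}$ of the block quotient $R_{j_t}/(J_{j_t}+\cdots)$ in degree $q_t$. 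Concretely, $a_t x_{i,j_t}^{d}\in J$ while $a_t\notin J$, so $a_t$ survives in the saturation quotient.

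We then combine the blocks. The product $a=a_1\cdots a_k$ is a monomial not in $J'$ but annihilated, multi-homogeneously, by powers of the last remaining variable of each block. Thus $a$ gives a nonzero element of $H^0_{\mathfrak m}(R'/\textrm{in}(I'))$ of degree $q_1+\cdots+q_k$. We need this to transfer to $R'/I'$ itself. We would argue via flat degeneration and the revlex property for the smallest variable of each block. Using the block-by-block ordering, the multi-degree $(q_1,\dots,q_k)$ component of $(I':\mathfrak{m}^\infty)/I'$ has the same dimension as the corresponding component for $J'$, which is nonzero. Finally, $\textrm{reg}\ge$ the top degree of $H^0_{\mathfrak m}$, giving $\textrm{reg}(R/I)\ge q_1+\cdots+q_k$.

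The main obstacle is this transfer in the last step. In the multi-graded setting, revlex only controls colon by the globally smallest variable, not by each block's last variable simultaneously. The particular variable ordering in the hypothesis, with all the $x_{\le i}$ variables before the $x_{>i}$ ones, is meant to let us peel off one block at a time. We would try an induction on $k$, treating one block's smallest variable at a time, and use the regularity condition $q_t+d_{i-1}>\textrm{reg}$ to ensure that earlier colons do not disturb the degree-$q_t$ piece of the other blocks.
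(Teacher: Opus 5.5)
There is a genuine gap, and it lies exactly where the multigraded setting departs from the single-graded one.

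\textbf{First gap: the reduction to $R/(I+L)$.} You justify $\textrm{reg}(R/I)\ge\textrm{reg}(R/(I+L))$ by almost regularity of the forms in $L$. But these forms are generic only inside their own block. Every associated prime of $R/I$ that contains a whole block $X_{j_t}$ contains all of them. For example, take $c=2$, $n_1=n_2=2$ and $I=(x_{1,1},x_{2,1})\cap(x_{1,2},x_{2,2})$. Every linear form $\ell$ in $X_1$ satisfies $\ell\,x_{1,2}^m\in I$ for all $m\ge 1$, so $0:_{R/I}\ell$ has infinite length.

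Without almost regularity you only get $\textrm{reg}(M/\ell M)\le\max\{\textrm{reg}(M),\textrm{reg}(0:_M\ell)-1\}$, where $0:_M\ell$ has positive dimension, and nothing you cite bounds the second term. This is exactly where the standard proof of Bayer--Stillman fails for $G$-generic coordinates. Killing the blocks $X_j$ with $j\notin\{j_1,\dots,j_k\}$ is harmless: by multihomogeneity it just restricts the minimal resolution of $R/I$ to multidegrees supported on the remaining blocks. The problem is the partial-block forms $u^{-1}(x_{p,j_t})$ with $p>i_{j_t}$.

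Tellingly, the hypothesis $q_t+d_{i_{j_t}-1,j_t}>\textrm{reg}(R_{j_t}/I_{j_t})$ plays no verifiable role in your outline. That $a_t$ gives a socle-type class of the block quotient already follows from strong stability of $J_{j_t}$. In the paper, that hypothesis is precisely what handles the last variables.

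\textbf{Second gap: the transfer from $J'$ to $I'$.} You leave this step open, and semicontinuity points the wrong way: the saturation pieces for $I'$ are at most as large as those for $J'$. As you note, revlex controls only the colon by the globally smallest variable.

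The missing idea is never to transfer a genuinely multigraded statement. Lift each $a_t$ inside its own block, where single-graded revlex theory applies (Proposition \ref{prop:Pow In Hilbert}), to some $f_t$ with $\textrm{in}(f_t)=a_t$. Then use the chosen variable order through Lemma \ref{lem:ordering lem}: for every $h$ in the ideal of the last variables, $\textrm{in}(f_1\cdots f_k+h)=a_1\cdots a_k\notin J$. Hence $f_1\cdots f_k$ survives modulo $I$ plus the last variables.

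The paper also sidesteps your first step entirely, in the following way.
\begin{enumerate}
\item Instead of quotienting $I$ by the last variables, it keeps them in the blockwise Koszul sequence $x_{n_j,j},\dots,x_{i_j+1,j},x_{i_j,j}^{d_{i_j,j}},\dots,x_{1,j}^{d_{1,j}}$. This sequence is almost regular on $R_j/I_j$, which is a single-graded fact.
\item It uses the regularity hypothesis, via Proposition \ref{prop:Kosz Hom rep}, to extend the $H_1$-class of $f_t$ to a cycle on the whole block sequence.
\item It multiplies these cycles using Lemma \ref{lem:Koszul Build}, and shows the product is not a boundary by the initial-term argument above.
\item It reads off the bound from $\textrm{Tor}$ against $R$ modulo these powers, which has finite length in the relevant multidegrees, as in Proposition \ref{prop:Reg Lower}.
\end{enumerate}
At no point is almost regularity on $R/I$ itself needed.
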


Theorem \ref{thm:Alg Main} can be used to prove certain restrictions on the possible faces of $\Delta^{cs}$ when $\Delta$ is $d$-Leray. This is the content of Theorem \ref{thm:comb main} below. For a $c$-tuple $\mathbf{t}\in \mathbb{N}^c$, let $X_{\mathbf{t},j}=\{x_{n_j-r_j+\min\{\mathbf{t}_j,d+1\},j},\dots,x_{n_j,j}\}$. For a subset $F$ of the vertex set of $\Delta$, define 
\[
g_{\mathbf{t},j}(F)=\begin{cases}
|F\cap (X_j\setminus{X_{\mathbf{t},j}})| & \textrm{if } |F\cap (X_j\setminus{X_{\mathbf{t},j}})|\geq \min\{\mathbf{t}_j,d+1\},\\
 0 &\textrm {otherwise.}
\end{cases}
\]
Throughout, ${V \choose \mathbf{t}}$ denotes the subsets of $V$ with exactly $\mathbf{t}_j$ vertices in $V_j$ for all $j$.

\begin{restatable}{theorem}{CombMain}\label{thm:comb main}
	Let $\Delta$ be a $d$-Leray simplicial complex with vertex set $V=V_1\dot{\cup} \cdots \dot{\cup} V_c$. Let $n_j=|V_j|$ and assume the largest face of $\Delta[V_j]$ has size at most $r_j$ for each $j$. If  $F\in {V \choose \mathbf{t}}$ is a set such that $\sum g_{\mathbf{t},j}(F) > d$, then $F$ is not a face of $\Delta^{cs}$ with respect to the reverse lexicographic monomial order where the variables in $\{x_{1,j},\dots,x_{n_{j}-r_{j},j}\mid 1\leq j\leq c \}$ are greater than any variable in $\{x_{n_{j}-r_{j}+1,j},\dots,x_{n_{j},j}\mid 1\leq j\leq c \}$.
\end{restatable}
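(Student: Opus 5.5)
The plan is to argue by contradiction and reduce Theorem~\ref{thm:comb main} to Theorem~\ref{thm:Alg Main} applied to $I=I_\Delta$ and $J=G\textrm{-GIN}(I_\Delta)$. Assume that $F\in{V\choose \mathbf t}$ satisfies $\sum_j g_{\mathbf t,j}(F)>d$ and that $F$ is nonetheless a face of $\Delta^{cs}$. By the description of colored shifting in Section~\ref{sec:Alg Shifting}, this means that the monomial $m_F$ attached to $F$ lies outside $J$. Here $m_F$ is the image of the squarefree monomial $x_F$ under the inverse of the squarefree (stretching) operator, applied blockwise. Let $j_1<\dots<j_k$ be the colors with $g_{\mathbf t,j}(F)\neq 0$. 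For each such color take $i_{j}=n_{j}-r_{j}$, so that the variable ordering demanded by Theorem~\ref{thm:Alg Main} is exactly the one fixed in Theorem~\ref{thm:comb main}. The goal is to produce monomials $a_t$ of degree $q_t=g_{\mathbf t,j_t}(F)$. Theorem~\ref{thm:Alg Main} then gives $\textrm{reg}(R/I_\Delta)\ge\sum_t q_t>d$, which contradicts the $d$-Leray hypothesis through Hochster's formula (Theorem~\ref{thm:HochstersFormula}).

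The single-color input comes next. Each induced complex $\Delta[V_j]$ is again $d$-Leray, so $\textrm{reg}(R_j/I_j)\le d$. Moreover $I_j=I_\Delta\cap\Bbbk[X_j]=I_{\Delta[V_j]}$, and $J_j$ is the ordinary generic initial ideal of $I_j$ in the revlex order on $X_j$. Hence by Theorem~\ref{thm:BayerStillman}, $\textrm{reg}(R_j/J_j)=\textrm{reg}(R_j/I_j)\le d$. Since $\Delta[V_j]$ has no face of size $r_j+1$, we have $\dim R_j/I_j\le r_j$. By the Bayer--Stillman description of revlex gins, it follows that modulo $(x_{i_j+1,j},\dots,x_{n_j,j})$ the ideal $J_j$ contains pure powers $x_{s,j}^{d_{s,j}}$ for $s\le i_j$, with $d_{1,j}\ge\dots\ge d_{i_j,j}$ and every $d_{s,j}\le \textrm{reg}+1\le d+1$. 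This fixes the integers $d_{s,j}$ required in Theorem~\ref{thm:Alg Main}.

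To define $a_t$, take the part of $m_F$ in color $j_t$ supported on $X_{j_t}\setminus X_{\mathbf t,j_t}$. After stretching, the variables of $X_{j_t}\setminus X_{\mathbf t,j_t}$ beyond index $i_{j_t}$ (there are $\min\{\mathbf t_{j_t},d+1\}-1$ of them) are absorbed into a power of $x_{i_{j_t},j_t}$. This rewriting should turn the piece into a monomial $a_t$ in $x_{1,j_t},\dots,x_{i_{j_t},j_t}$ of degree $q_t$. One then checks three things. First, $a_t\,x_{i_{j_t},j_t}^{d_{i_{j_t},j_t}}\in J_{j_t}+(x_{>i_{j_t}})$, which holds because the degree in $x_{i_{j_t},j_t}$ reaches the socle degree. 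Second, $q_t+d_{i_{j_t}-1,j_t}>\textrm{reg}$, which uses $q_t\ge\min\{\mathbf t_{j_t},d+1\}$ together with the structure of the pure powers. Third, $a_1\cdots a_k\notin J+(x_{>i_{j_t},j_t})$, which should follow from $m_F\notin J$ together with the Borel-fixedness of $J$ under $G$: one moves variables to smaller indices within each block, so a multiple of $m_F$ modulo the killed variables dominates $a_1\cdots a_k$.

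I expect the main obstacle to be this translation step. One has to show that the blockwise Borel moves and the stretching/unstretching bookkeeping really take the face $F$ of $\Delta^{cs}$ to a product $a_1\cdots a_k$ avoiding $J+(x_{>i})$. One also has to confirm that the threshold $\min\{\mathbf t_j,d+1\}$ in the definition of $X_{\mathbf t,j}$ and of $g_{\mathbf t,j}$ is precisely what makes the degree inequality $q_t+d_{i_{j_t}-1,j_t}>\textrm{reg}(R_{j_t}/J_{j_t})$ hold. The approach I would try first is the case $k=1$, where the statement reduces to the uncolored shifting proof of Theorem~\ref{thm:d-leray-upbound} (Lemma~\ref{lem: 1- color comb main}), and then to check that the same monomial works blockwise.
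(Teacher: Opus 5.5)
Your skeleton matches the paper's proof. You argue by contradiction, take $i_j=n_j-r_j$ so the variable order is the one in Theorem~\ref{thm:Alg Main}, build monomials $a_t$ of degree $q_t=g_{\mathbf t,j_t}(F)$, and conclude $\textrm{reg}(R/I_\Delta)>d$.

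\textbf{The gap.} Your setup does not deliver the degree hypothesis $q_t+d_{i_{j_t}-1,j_t}>\textrm{reg}(R_{j_t}/J_{j_t})$. You tie the $d_{s,j}$ to pure powers $x_{s,j}^{d_{s,j}}\in J_j$ with $d_{s,j}\le\textrm{reg}+1$. In Theorem~\ref{thm:Alg Main}, however, these exponents are free parameters: they only define the almost regular sequence $x_{n_j,j}^{d_{n_j,j}},\dots,x_{1,j}^{d_{1,j}}$. Pure-power exponents of $J_j$ can be far too small. For instance, take $d\ge 3$ and let $\Delta[V_j]$ be the boundary of a $d$-simplex disjoint from a simplex on $r_j\ge d+1$ vertices. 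Then $\textrm{reg}(R_j/J_j)=d$. On the other hand, reducing $R_j/I_j$ by $r_j$ generic linear forms leaves a module concentrated in degrees $\le 1$, so $x_{n_j-r_j,j}^2\in J_j$. Your recipe therefore allows $d_{s,j}=2$, and with $\mathbf t_j=1$ you get $q_t+d_{i_j-1,j}=3\le d$.

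The bound $q_t\ge\min\{\mathbf t_j,d+1\}$ cannot repair this. That threshold is what places the relevant monomial in $\Bbbk[x_{1,j},\dots,x_{n_j-r_j,j}]$; it is not what produces the degree inequality. A further sign that the two roles have been conflated: the minimal pure-power exponents of a strongly stable ideal are nondecreasing in $s$, the reverse of the monotonicity Theorem~\ref{thm:Alg Main} requires.

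\textbf{The missing idea.} Choose the $d_{s,j}$ in terms of $d$, so that both remaining hypotheses hold by construction.
\begin{enumerate}
\item First observe that $x_{n_j-r_j,j}^{d+1}\in J$. The paper gets this from Lemma~\ref{lem: 1- color comb main} applied to $\Delta^{cs}[V_j]=(\Delta[V_j])^s$; your dimension-plus-Bayer--Stillman argument gives the same fact.
\item This forces $\mathbf t_j\le d$ whenever $g_{\mathbf t,j}(F)\neq 0$. Hence $g_{\mathbf t,j}(F)=\mathbf t_j$ and $F\cap V_j\subseteq\{x_{1,j},\dots,x_{n_j-r_j+\mathbf t_j-1,j}\}$.
\item Color-shiftedness (moving vertices to \emph{larger} indices preserves faces) then gives the face $\bigcup_{j\le k}\{x_{n_j-r_j,j},\dots,x_{n_j-r_j+\mathbf t_j-1,j}\}$. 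Equivalently, $\prod_{j\le k}x_{n_j-r_j,j}^{\mathbf t_j}\notin J$, which settles the non-membership hypothesis cleanly.
\item Take $a_t=x_{n_{j_t}-r_{j_t},j_t}^{\mathbf t_{j_t}}$ and $d_{s,j}=d+1-\mathbf t_j$ for $s\le n_j-r_j$. The colon condition is then exactly $x_{n_j-r_j,j}^{d+1}\in J_j$. The degree condition reads $\mathbf t_j+(d+1-\mathbf t_j)=d+1>\textrm{reg}(R_j/I_j)$.
\end{enumerate}
Your own justification of non-membership does not work as phrased. A Borel-larger \emph{multiple} of $m_F$ dominating $a_1\cdots a_k$ proves nothing; you need a \emph{divisor} of $m_F$. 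Your unstretched monomials do have this property, and that would also suffice.
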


A direct application of Theorem \ref{thm:comb main} then proves Conjecture \ref{conj:Upper Main} and hence also Conjecture \ref{conj:col frac helly}.

\begin{restatable}{theorem}{ConjecturesHold}
    Conjectures \ref{conj:Upper Main} and \ref{conj:col frac helly} hold.
\end{restatable}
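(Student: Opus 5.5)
The plan is to deduce Conjecture \ref{conj:Upper Main} directly from Theorem \ref{thm:comb main}, specialized to $c=d+1$ and $\mathbf{t}=(1,\dots,1)$. Conjecture \ref{conj:col frac helly} then follows from the deduction in \cite{Bulavka2021Optimal}.

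Let $\Delta$ be $d$-Leray on $V=V_1\dot{\cup}\cdots\dot{\cup}V_{d+1}$, with $n_j=|V_j|$ and the largest face of $\Delta[V_j]$ of size at most $r_j$. We may assume $r_j\leq n_j$ for every $j$. Fix the reverse lexicographic order in which the variables $\{x_{1,j},\dots,x_{n_j-r_j,j}\mid 1\leq j\leq d+1\}$ are greater than all the remaining variables. Let $\Delta^{cs}$ be the colored algebraic shifting of $\Delta$ with respect to this order.

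The first ingredient is that colored algebraic shifting preserves flag face numbers (Section \ref{sec:Alg Shifting}, \cite{Babson2004Face}). This holds because the $G$-generic initial ideal has the same multi-graded Hilbert function as $I_\Delta$, and this is true for every choice of variable ordering in the revlex order. In particular $f_{(1,\dots,1)}(\Delta)=f_{(1,\dots,1)}(\Delta^{cs})$, so it suffices to bound the number of faces of $\Delta^{cs}$ in ${V\choose (1,\dots,1)}$.

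Now compute $g_{\mathbf{t},j}$ for $\mathbf{t}=(1,\dots,1)$. Here $\min\{\mathbf{t}_j,d+1\}=1$, so $X_{\mathbf{t},j}=\{x_{n_j-r_j+1,j},\dots,x_{n_j,j}\}$ and $X_j\setminus X_{\mathbf{t},j}=\{x_{1,j},\dots,x_{n_j-r_j,j}\}$. Take $F\in{V\choose (1,\dots,1)}$, and let $v_j$ be its unique vertex in $V_j$. Then $g_{\mathbf{t},j}(F)=1$ if $v_j$ is among the first $n_j-r_j$ vertices of $V_j$, and $g_{\mathbf{t},j}(F)=0$ otherwise.

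Hence $\sum_j g_{\mathbf{t},j}(F)>d$, that is, the sum equals $d+1$, exactly when $v_j\in X_j\setminus X_{\mathbf{t},j}$ for every $j$. There are $\prod_{j}(n_j-r_j)$ such sets $F$. By Theorem \ref{thm:comb main}, none of them is a face of $\Delta^{cs}$. Therefore
\[
f_{(1,\dots,1)}(\Delta)=f_{(1,\dots,1)}(\Delta^{cs})\leq n_1\cdots n_{d+1}-(n_1-r_1)\cdots(n_{d+1}-r_{d+1}),
\]
which is Conjecture \ref{conj:Upper Main}.

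For Conjecture \ref{conj:col frac helly}, suppose every $\Delta[V_j]$ has largest face of size $r_j<(1-(1-\alpha)^{1/(d+1)})n_j$. Then $n_j-r_j>(1-\alpha)^{1/(d+1)}n_j$. The bound above gives $f_{(1,\dots,1)}(\Delta)<n_1\cdots n_{d+1}-(1-\alpha)n_1\cdots n_{d+1}=\alpha\, n_1\cdots n_{d+1}$, which contradicts the hypothesis. This is the deduction of \cite{Bulavka2021Optimal}.

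All the real difficulty lies in Theorem \ref{thm:comb main}, which we are allowed to assume. The only point needing care here is that flag-number preservation holds for the nonstandard variable ordering used in Theorem \ref{thm:comb main}. This follows because the Hilbert function of an initial ideal does not depend on the monomial order.
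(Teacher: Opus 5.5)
Your proposal is correct and is the paper's own proof. Like the paper, you apply Theorem \ref{thm:comb main} with $c=d+1$ and $\mathbf{t}=(1,\dots,1)$ to rule out the $\prod_j(n_j-r_j)$ transversals inside $\bigcup_j\{x_{1,j},\dots,x_{n_j-r_j,j}\}$ as faces of $\Delta^{cs}$, then count, and you additionally spell out the $g_{\mathbf{t},j}$ computation and the fractional Helly deduction that the paper leaves implicit.

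One small precision: order-independence of Hilbert functions of initial ideals only handles the passage to $G$-GIN$(I_\Delta)$. Flag-number preservation also needs the squarefree-operator step $\tilde{\Phi}$, which is covered by Theorem \ref{thm:shifted props} for any order with $x_{1,j}>\cdots>x_{n_j,j}$.
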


We can obtain a more general upper bound result again as a direct application of Theorem \ref{thm:comb main}. For $\mathbf{n}=(n_1,\dots,n_c)$, $\mathbf{r}=(r_1,\dots,r_c)$ and a tuple $\mathbf{t}\in \mathbb{N}^c$, define 
\[
\mathscr{F}_{\mathbf{t}}(\mathbf{n},d,\mathbf{r}) = \left\{ F\in {V\choose \mathbf{t}} \mid \sum g_{\mathbf{t},j}(F) > d\right\}.
\]

\begin{corollary}\label{cor:gen Upper Bound}
        Let $\Delta$ be a $d$-Leray simplicial complex with vertex set $V=V_1\dot{\cup} \cdots \dot{\cup} V_c$. Let $n_j=|V_j|$ and assume the largest face of $\Delta[V_j]$ has size at most $r_j$ for each $j$. Then
        \[
        f_{\mathbf{t}}(\Delta) \leq \left|\left\{F\in {V\choose \mathbf{t}} \mid F \textrm{ contains no set in } \mathscr{F}_{\mathbf{k}}(\mathbf{n},d,\mathbf{r}) \textrm{ for any }\mathbf{k}\in \mathbb{N}^c  \right\}\right|.
        \]
\end{corollary}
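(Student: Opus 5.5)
The plan is to pass from $\Delta$ to its colored algebraic shifting $\Delta^{cs}$ with respect to the reverse lexicographic order fixed in Theorem \ref{thm:comb main}, and then to count. Colored algebraic shifting preserves flag face numbers, so $f_{\mathbf{t}}(\Delta)=f_{\mathbf{t}}(\Delta^{cs})$. It therefore suffices to show that every face of $\Delta^{cs}$ lying in ${V\choose \mathbf{t}}$ contains no set of $\mathscr{F}_{\mathbf{k}}(\mathbf{n},d,\mathbf{r})$ for any $\mathbf{k}\in\mathbb{N}^c$. The bound is then immediate, because the right-hand side counts exactly the sets in ${V\choose\mathbf{t}}$ with this property.

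To verify the property, let $F\in{V\choose\mathbf{t}}$ be a face of $\Delta^{cs}$, and suppose for contradiction that $F\supseteq F'$ with $F'\in\mathscr{F}_{\mathbf{k}}(\mathbf{n},d,\mathbf{r})$ for some $\mathbf{k}$. Then $F'\in{V\choose \mathbf{k}}$ and $\sum_j g_{\mathbf{k},j}(F')>d$. Since $\Delta^{cs}$ is a simplicial complex (its Stanley--Reisner ideal is the monomial ideal $G\textrm{-GIN}(I_\Delta)$, which is squarefree by the construction recalled in Section \ref{sec:Alg Shifting}), faces are closed under taking subsets, so $F'$ is also a face of $\Delta^{cs}$. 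But Theorem \ref{thm:comb main}, applied with the tuple $\mathbf{k}$ in place of $\mathbf{t}$, says that $F'$ is not a face of $\Delta^{cs}$. This is a contradiction.

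The only point needing care is that Theorem \ref{thm:comb main} must be applied to the \emph{same} $\Delta^{cs}$ for every tuple $\mathbf{k}$. This holds because the monomial order prescribed in Theorem \ref{thm:comb main}, placing the variables $x_{1,j},\dots,x_{n_j-r_j,j}$ above the remaining ones, depends only on $\mathbf{n}$ and $\mathbf{r}$ and not on $\mathbf{t}$. Hence a single shifted complex works simultaneously for all $\mathbf{k}$, and the argument above goes through. I do not expect any real obstacle beyond this check; all the substance lies in Theorem \ref{thm:comb main}.
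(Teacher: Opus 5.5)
Your proposal is correct and is the same direct application of Theorem \ref{thm:comb main} that the paper intends: pass to $\Delta^{cs}$, use $f_{\mathbf{t}}(\Delta)=f_{\mathbf{t}}(\Delta^{cs})$, close faces under subsets, and note that the prescribed order does not depend on $\mathbf{k}$, so one shifted complex serves for all $\mathbf{k}$. One small correction to a parenthetical: the Stanley--Reisner ideal of $\Delta^{cs}$ is $\tilde{\Phi}(G\textrm{-GIN}(I_\Delta))$, not $G\textrm{-GIN}(I_\Delta)$, which is generally not squarefree; this is harmless, since $\Delta^{cs}$ is a simplicial complex by definition.
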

\noindent Furthermore, we show in Proposition \ref{prop:Upper Bound Tight} that the simplicial complex defined by 
\[
\left\{F\subseteq {V} \mid F \textrm{ contains no set in } \mathscr{F}_{\mathbf{k}}(\mathbf{n},d,\mathbf{r}) \textrm{ for any }\mathbf{k}\in \mathbb{N}^c  \right\}
\]
is itself $d$-Leray (actually we show that it is $d$-collapsible), which implies that the bound in Corollary \ref{cor:gen Upper Bound} is tight.

Another incredible fact about algebraic shifting is that it preserves the topological Betti numbers of $\Delta$: Define $\beta_i(\Delta) = \textrm{dim}_{\mathbb{Q}}\tilde{H}_i(\Delta)$, where homology is taken with coefficients in $\mathbb{Q}$, then $\beta_i(\Delta) = \beta_i(\Delta^s)$ for $i\geq -1$. Additionally, one can read off the value of $\beta_i(\Delta)$ from the face structure of $\Delta^s$:
\[
\beta_{i-1}(\Delta) = \left| \{ F\in \Delta^s \mid |F| = i,\, F\cup \{n\} \notin \Delta^s\} \right|.
\]
For colored algebraic shifting, we have a lower-bound analogue of this result below. For a tuple $\mathbf{t}\in \mathbb{N}^c$, we impose further the following condition on the ordering of the variables: the variables in $\{x_{1,j},\dots,x_{n_j-\mathbf{t}_j,j} \}$ are greater than each variable in $\{x_{n_j-\mathbf{t}_j+1,j},\dots,x_{n_j,j} \}$ for each $j$. We fix a monomial order $>_\mathbf{t}$ that satisfies this ordering condition on the variables, and we define $\Delta_{\mathbf{t}}^{cs}$ to be the colored algebraic shifting with respect to this order.

\begin{restatable}{theorem}{bettiLowBound}\label{thm:Betti lower bound}
Let $\Delta$ be a simplicial complex with vertex set $V=V_1\dot{\cup} \cdots \dot{\cup} V_c$. Then
\[
\beta_{i-1}(\Delta) \geq \sum_{\mathbf{t}}\left|\{ F\in \Delta_{\mathbf{t}}^{cs} \cap {V \choose \mathbf{t}} \mid (F\cap V_j)\cup \{x_{n_j,j}\}\notin \Delta_{\mathbf{t}}^{cs} \textrm{ for all } j \} \right|,
\]
where the sum ranges over tuples $\mathbf{t}\in \mathbb{N}^c$ such that $\sum_{j=1}^c\mathbf{t}_j=i$.
\end{restatable}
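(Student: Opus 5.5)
We prove Theorem \ref{thm:Betti lower bound} by transporting the statement to Betti numbers of the Stanley--Reisner ring and then reading off a lower bound from the generic initial ideal. By Hochster's formula (Theorem \ref{thm:HochstersFormula}) with $W=V$, we have $\beta_{i-1}(\Delta)=\beta_{|V|-i,|V|}(R/I_\Delta)$. Fix $\mathbf{t}$ with $\sum_j\mathbf{t}_j=i$. The idea is to refine this Betti number by the multi-grading and show two things:
\begin{itemize}
\item the contribution in multidegree $(n_1,\dots,n_c)$ that is tied to $\mathbf{t}$ is at least the number of faces of $\Delta^{cs}_{\mathbf{t}}$ counted on the right-hand side;
\item contributions for distinct $\mathbf{t}$ do not overlap.
\end{itemize}
Equivalently, we will work with the Koszul/local-cohomology description. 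The top multi-graded Betti number $\beta_{|V|-i,(n_1,\dots,n_c)}(R/I)$ equals the dimension of $H_{|V|-i}$ of the Koszul complex on all the variables, in that multidegree. Since the multidegree is squarefree-full, $\beta_{i-1}(\Delta)$ is the sum over $\mathbf{t}$ of the dimensions of the pieces of $\tilde H_{i-1}(\Delta)$ filtered by color type. To make this precise, we filter the chain complex of $\Delta$ by the flag type $\mathbf{t}$ of faces, using the lexicographic/partial order on $\mathbb{N}^c$. This gives a spectral sequence whose $E^1$ terms are relative homologies, and $\beta_{i-1}(\Delta)$ is bounded below by pieces that survive.

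For the core step, we generalize the classical argument that $\beta_{i-1}(\Delta)=|\{F\in\Delta^s: |F|=i,\ F\cup\{n\}\notin\Delta^s\}|$. That argument uses the fact that for generic coordinates under revlex, $x_n$ is almost regular: $\textrm{GIN}(I):x_n$ and $I:y_n$ (with $y_n$ a generic linear form) have matching Hilbert functions, and the homology is computed by iterating Bayer--Stillman's criterion. In the colored setting we do the following for each color $j$:
\begin{itemize}
\item take a generic linear form $y_j$ in block $X_j$, whose initial variable is $x_{n_j,j}$ under revlex;
\item use the ordering condition of $>_{\mathbf{t}}$ to ensure that, within each block, the last $\mathbf{t}_j$ variables behave like a generic almost regular sequence, as in the almost regular sequence machinery used for Theorem \ref{thm:Alg Main}.
\end{itemize}
The count on the right side for a fixed $\mathbf{t}$ is then the dimension, in multidegree $\mathbf{t}$, of the space of monomials $m$ of $R/J$ (with $J=G\textrm{-GIN}(I_\Delta)$ under $>_{\mathbf{t}}$) that satisfy $m\,x_{n_j,j}\in J$ for all $j$. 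This is exactly the socle-type space
\[
\big((J:(x_{n_1,1},\dots,x_{n_c,c}))/J\big)_{\mathbf{t}}.
\]
Passing back from $J$ to $uI$ by a flat degeneration (upper semicontinuity) gives $\dim ((uI:(y_1,\dots,y_c))/uI)_{\mathbf{t}}\ge$ this count. The inequality goes the right way because the colon ideal only grows under degeneration, at least after restricting to squarefree multidegrees where the Hilbert functions agree.

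Finally, we connect $((I:(y_1,\dots,y_c))/I)_{\mathbf{t}}$ to homology. In the exterior-algebra picture of Kalai shifting, the generic forms $y_j$ correspond to generic vertex cones, one per color. The kernel of multiplication by all $y_j$ in the exterior face ring, in multidegree $\mathbf{t}$, injects into $\tilde H_{i-1}(\Delta)$ restricted to the $\mathbf{t}$-graded piece of the filtration: the boundary map $\partial=\sum_j \partial_{y_j}$ splits by color. Summing over $\mathbf{t}$ with $|\mathbf{t}|=i$ then gives the bound.

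The main obstacle is this last identification, together with the fact that different $\mathbf{t}$ use different monomial orders $>_{\mathbf{t}}$. We must check that the images for different $\mathbf{t}$ are independent in $\tilde H_{i-1}(\Delta)$. Our plan is to show that the image for $\mathbf{t}$ lies in the $\mathbf{t}$-th graded piece of the color filtration, which gives independence. The argument that establishes this is the one we are least sure of.
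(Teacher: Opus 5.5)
Your outline has the right shape: identify $\beta_{i-1}(\Delta)$ with the multidegree-$\mathbf{n}$ part of Koszul homology of $R/I_\Delta$ on all variables, produce for each $\mathbf{t}$ a subspace of dimension at least the count, and show independence across $\mathbf{t}$. However, the core algebraic identification is false. The count for $\mathbf{t}$ is not $\dim_\Bbbk((J:(x_{n_1,1},\dots,x_{n_c,c}))/J)_{\mathbf{t}}$. Take $c=1$ and $\Delta$ two isolated vertices. Then $J=\textrm{GIN}((x_1x_2))=(x_1^2)$, $\Delta^s=\Delta$, and the count is $1=\beta_0(\Delta)$, yet $(J:x_2)/J=0$. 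You are treating $J$ as if it were the squarefree ideal of $\Delta^{cs}_{\mathbf{t}}$. Translated correctly through $\tilde{\Phi}$, let $a_j$ be the degree-$\mathbf{t}_j$ monomial with $\tilde{\Phi}(a_j)$ supported on $F\cap V_j$. Then $(F\cap V_j)\cup\{x_{n_j,j}\}\notin\Delta^{cs}_{\mathbf{t}}$ means $a_jx_{n_j-\mathbf{t}_j,j}\in J$, i.e.\ $a_j\in(0:_{M_j}x_{n_j-\mathbf{t}_j,j})_{\mathbf{t}_j}$ with $M_j=R_j/(J_j+(x_{n_j-\mathbf{t}_j+1,j},\dots,x_{n_j,j}))$. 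The condition $F\in\Delta^{cs}_{\mathbf{t}}$ means $a_1\cdots a_c\notin J+(x_{n_j-\mathbf{t}_j+1,j},\dots,x_{n_j,j}\mid 1\le j\le c)$. So the relevant colon is by $x_{n_j-\mathbf{t}_j,j}$ \emph{after killing the last $\mathbf{t}_j$ variables of the block}, one color at a time. This is the module that Theorem~\ref{thm:Ext Betti Gen} attaches to the extremal top-degree position $(n_j-\mathbf{t}_j,n_j)$, i.e.\ to $\tilde{H}_{\mathbf{t}_j-1}(\Delta[V_j])$. Your condition $mx_{n_j,j}\in J$ also uses all of $m$ rather than its color-$j$ part, which gives a different set even in squarefree terms.

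The transfer from $J$ to $I_\Delta$ and the link to homology are also missing. Semicontinuity runs the other way. Since $\textrm{in}(I:y)\subseteq\textrm{in}(I):\textrm{in}(y)$, colon modules on the monomial side are \emph{at least} as large as on the $I$ side, which is useless for a lower bound. The paper instead uses the revlex identity $\textrm{in}(I+(x_{i+1},\dots,x_n):x_i)=\textrm{in}(I)+(x_{i+1},\dots,x_n):x_i$ (Proposition~\ref{prop:In Hilbert}). This holds for one variable modulo all smaller ones, which is why $>_{\mathbf{t}}$ puts the tails last, and it is not available for a simultaneous colon by one variable from each block. So the paper lifts each $a_j$ separately inside $R_j/I_j$, where $J_j=\textrm{GIN}(I_j)$, to some $f_j$ with $\textrm{in}(f_j)=a_j$. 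It turns each $f_j$ into a cycle $f_j\mathbf{e}_{A^j}+\cdots$ of $K(x_{n_j,j},\dots,x_{1,j};R_j/I_j)$ via Proposition~\ref{prop:Kosz Hom iso}, then multiplies these cycles via Lemma~\ref{lem:Koszul Build} to get a cycle in homological degree $n-i$ and multidegree $\mathbf{n}$ for $R/I_\Delta$.

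The independence you were unsure about then comes from the Koszul complex itself. It does not come from a color filtration of $\tilde H_{i-1}(\Delta)$ or from exterior-algebra vertex cones; those belong to exterior shifting, while $\Delta^{cs}$ is defined via the symmetric $G\textrm{-GIN}$. Let $A_{\mathbf{t}}$ be the indices of the non-tail variables $x_{1,j},\dots,x_{n_j-\mathbf{t}_j,j}$. In any boundary, the coefficient of $\mathbf{e}_{A_{\mathbf{t}}}$ lies in $I+(\textrm{tail variables of }\mathbf{t})$. Cycles for $\mathbf{t}'\neq\mathbf{t}$ only involve basis elements with $n_j-\mathbf{t}'_j$ indices of color $j$, so they never contain $\mathbf{e}_{A_{\mathbf{t}}}$. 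Finally, Lemma~\ref{lem:ordering lem} for $>_{\mathbf{t}}$, with one $u$ generic for all orders $>_{\mathbf{t}}$ at once, gives $\textrm{in}(f_1\cdots f_c+h)=a_1\cdots a_c\notin J$. Without these ingredients, your argument does not yet produce a single nonzero class in $\tilde H_{i-1}(\Delta)$.
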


\section{Almost regular sequences and preliminary results}

A key notion is that of an \textit{almost regular sequence}, originally defined in \cite{aramova2000Almost}. We briefly review the relevant definitions and results on this topic. We note that the definition of an almost regular sequence varies slightly here in that we do not require the sequence to consist of linear forms.

\begin{definition}
    Let $\ell_1,\dots,\ell_n$ be a sequence of homogeneous elements in $S$. Let $M$ be a finitely-generated graded $S$-module, and define $M\langle j\rangle=M/(\ell_1,\dots,\ell_j)M$. Then $\ell_1,\dots,\ell_n$ is called an \textit{almost regular sequence} of $M$ if it is a regular sequence of $S$ and $(0:_{M\langle j-1 \rangle} \ell_j)$ is a module of finite length for all $1\leq j\leq n$. 
\end{definition}

Let $M$ be a finitely-generated $S$-module with the almost regular sequence $\ell_1,\dots,\ell_n$ with degrees $k_1,\dots,k_n$, respectively. Consider the Koszul complex $K_{\bullet}(\ell_1,\dots,\ell_j;M)$, and let $H_i(j)=H_{i}(\ell_1,\dots,\ell_j;M)$ be the $i$'th Koszul homology. Additionally, for a finite length graded $S$-module $N$, we set $s(N)$ to be the top degree of $N$. We define
\begin{align*}
    m_j =\textrm{max}\{s(H_i(j)) - (k_j+\cdots+k_{j-i+1})\mid 1\leq i\leq j \}\ \textrm{ and }\ s_j = s\left((0:_{M\langle j-1 \rangle} \ell_j) \right)
\end{align*}
for $j=1,\dots,n$, and we additionally set $m_0=0$ and $s_0=0$ (we will see that $H_i(j)$ has finite length, so $s(H_i(j))$ is well-defined). The following result is a slight generalization of Theorem 1.1 in \cite{aramova2000Almost}. The proof is essentially the same as well, but we include it here for completeness.

We use the well-known fact (see for instance \cite{bruns1993cohen}) that there is a long exact sequence
    \begin{align}\label{eq:LES}
       \cdots &\longrightarrow H_{i+1}(j)\longrightarrow H_i(j-1)(-k_j)\longrightarrow H_i(j-1) \longrightarrow H_i(j)\longrightarrow \cdots\\
       &\longrightarrow H_2(j)\longrightarrow H_1(j-1)(-k_j) \longrightarrow H_1(j-1) \longrightarrow H_1(j) \longrightarrow (0:_{M\langle j-1 \rangle} \ell_j)(-k_j)\longrightarrow 0 \nonumber,
    \end{align}
    where the map $H_i(j-1)(-k_j)\longrightarrow H_i(j-1)$ is multiplication by $\ell_j$. This arises from the short exact sequence of complexes given in each homological degree by
    \[
    0\longrightarrow K_i(\ell_1,\dots,\ell_{j-1};M) \longrightarrow K_i(\ell_1,\dots,\ell_j;M) \longrightarrow K_{i-1}(\ell_1,\dots,\ell_{j-1};M)(-k_j)\longrightarrow 0.
    \]
\begin{theorem}\label{thm:Ext Betti Gen}
    Let $\ell_1,\dots,\ell_n$ be an almost regular sequence of $M$ with degrees $k_1\leq \cdots \leq k_n$. Then
    \begin{itemize}
        \item[(a)] $m_j = \textrm{max}\{ s_1,\dots,s_j\}$ for $j=1,\dots,n$. In particular, $m_1\leq \cdots \leq m_n$.
        \item[(b)] Let $i_1<\cdots<i_p$ be the indices $i$ such that $m_i-m_{i-1} > 0$. Then for all $1\leq t \leq p$ and all $j\geq i_t$ we have:
        \begin{itemize}
            \item[(i)] $H_i(j)_{k_j+\cdots+k_{j-i+1}+s}=0$ for $s>m_{i_{t-1}}$ and $i>j-i_t+1$,
            \item[(ii)] $H_{j-i_t+1}(j)_{k_j+\cdots+ k_{i_t}+m_{i_t}} \cong (0:_{M\langle i_t -1 \rangle} \ell_{i_t})_{s_{i_t}}$.
        \end{itemize}
    \end{itemize}
\end{theorem}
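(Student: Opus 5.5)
We prove (a) and (b) together by induction on $j$, following Aramova--Herzog, using the long exact sequence \eqref{eq:LES}. First, all $H_i(j)$ for $i\geq 1$ have finite length. For $j=1$ we have $H_1(1)\cong (0:_M \ell_1)(-k_1)$, which has finite length by hypothesis. For the inductive step, look at the portion of \eqref{eq:LES} around $H_i(j)$: it sits between $H_i(j-1)$ (finite length for $i\geq 1$) and $\ker(\ell_j\colon H_{i-1}(j-1)(-k_j)\to H_{i-1}(j-1))$. When $i=1$ the latter term is replaced by $(0:_{M\langle j-1\rangle}\ell_j)(-k_j)$. All these terms have finite length by induction and the almost regular hypothesis.

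For (a), we show by induction that $m_j=\max\{m_{j-1},s_j\}$, with $m_0=0$. We read off top degrees from the exact pieces
\[
H_i(j-1)\to H_i(j)\to \ker\bigl(\ell_j : H_{i-1}(j-1)(-k_j)\to H_{i-1}(j-1)\bigr)\to 0 .
\]
This gives $s(H_i(j))\le \max\{s(H_i(j-1)),\, s(H_{i-1}(j-1))+k_j\}$ for $i\ge2$. For $i=1$ it gives $s(H_1(j))\le\max\{s(H_1(j-1)), s_j+k_j\}$.

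Now subtract $k_j+\cdots+k_{j-i+1}$. The second term yields $s(H_{i-1}(j-1))-(k_{j-1}+\cdots+k_{j-i+1})\le m_{j-1}$. The first term yields $s(H_i(j-1))-(k_{j-1}+\cdots+k_{j-i})-(k_j-k_{j-i})\le m_{j-1}$, and here we use $k_j\ge k_{j-i}$, which is exactly where the monotonicity $k_1\le\cdots\le k_n$ enters. This proves $m_j\le\max\{m_{j-1},s_j\}$.

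For the reverse inequality there are two pieces. The surjection $H_1(j)\to(0:_{M\langle j-1\rangle}\ell_j)(-k_j)$ gives $m_j\ge s_j$. To get $m_j\ge m_{j-1}$, take the index $i$ achieving $m_{j-1}$ and a socle-top element $z$ of $H_i(j-1)$ in degree $D=s(H_i(j-1))$. If $\deg\ell_j>0$, then multiplication by $\ell_j$ kills $H_i(j-1)_D$ in the top degree. Hence $z(-k_j)$ lies in the kernel, and by exactness it lifts to $H_{i+1}(j)$ in degree $D+k_j$. This gives $s(H_{i+1}(j))-(k_j+\cdots+k_{j-i})\ge m_{j-1}$.

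For (b), we refine the same degree bookkeeping. Fix $t$ and induct on $j\ge i_t$. For $j=i_t$ we have $s_{i_t}=m_{i_t}>m_{i_t-1}$. In the exact sequence, every contribution from $H_\bullet(j-1)$ lives in shifted degrees at most $m_{i_t-1}=m_{i_{t-1}}$. Therefore, in the critical degree, $H_1(i_t)_{k_{i_t}+m_{i_t}}\cong(0:\ell_{i_t})_{s_{i_t}}$, and the vanishing (i) holds for $i>1$. For $j>i_t$, the top-degree part of $H_{i-1}(j-1)$ lies in the kernel of $\ell_j$. The connecting pieces then transport the isomorphism (ii) one homological degree up: $H_{j-i_t+1}(j)$ in degree $k_j+\cdots+k_{i_t}+m_{i_t}$ is isomorphic to the corresponding graded piece of $H_{j-i_t}(j-1)$, because the $H_{j-i_t+1}(j-1)$ contribution vanishes there by (i). The vanishing (i) is preserved because both flanking terms vanish in the relevant degrees, again using $k_j\ge k_{j-i}$.

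The main obstacle is the precise degree bookkeeping in (b). With nonlinear, unequal degrees the shifts no longer line up uniformly. We must check that the inequality $k_j\ge k_{j-i}$ always points in the favorable direction, so that the term $H_i(j-1)$ cannot intrude into the extremal degree. This is the point where the argument is a genuine generalization of \cite{aramova2000Almost} rather than a transcription.
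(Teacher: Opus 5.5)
Your strategy is the paper's: the long exact sequence \eqref{eq:LES}, degree bookkeeping using $k_j\ge k_{j-i}$, and induction on $j$. Part (a), part (b)(i), and the base case $j=i_t$ of (b)(ii) are fine and agree with the paper. Your forward lifting argument for $m_j\ge m_{j-1}$ is the paper's contradiction argument run directly. The gap is in the inductive step of (b)(ii).

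There you need the multiplication map $\ell_j\colon H_{j-i_t}(j-1)_{k_{j-1}+\cdots+k_{i_t}+m_{i_t}}\to H_{j-i_t}(j-1)_{k_j+\cdots+k_{i_t}+m_{i_t}}$ to be zero. You justify this by saying that the top-degree part is killed by $\ell_j$. That assumes $k_{j-1}+\cdots+k_{i_t}+m_{i_t}$ is the top degree of $H_{j-i_t}(j-1)$. The definition of $m_{j-1}$ gives this only when $m_{j-1}=m_{i_t}$, i.e.\ when $j-1<i_{t+1}$. If $j-1\ge i_{t+1}$, then $m_{j-1}>m_{i_t}$. In that case nothing established for the fixed index $t$ rules out nonzero classes of $H_{j-i_t}(j-1)$ in shifted degree $m_{i_t}+k_j$. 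Your (i) for index $t$ at stage $j-1$ only covers homological degrees $>(j-1)-i_t+1=j-i_t$, which just misses the degree you need.

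The missing ingredient is (i) for the \emph{next} jump index $t+1$. With $j-1\ge i_{t+1}$, the homological degree satisfies $j-i_t>(j-1)-i_{t+1}+1$, and the shifted degree satisfies $m_{i_t}+k_j>m_{i_t}$. Hence $H_{j-i_t}(j-1)_{k_j+\cdots+k_{i_t}+m_{i_t}}=0$.

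So you cannot prove (i) and (ii) together in a single induction on $j$ for fixed $t$. Instead, first prove (i) for every $t$; your argument for (i) uses only (a) and the same-$t$ induction, so this is possible. Then prove (ii), splitting the inductive step into the cases $j-1<i_{t+1}$ (use the definition of $m_{j-1}=m_{i_t}$) and $j-1\ge i_{t+1}$ (use (i) for $t+1$), as the paper does.
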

\begin{proof}
     For (a), we show by induction on $j$ that $m_j=\textrm{max}\{m_{j-1},s_j\}$. For $j=1$ we have $H_i(1)=0$ for $i>1$ and $H_1(1) \cong (0:_M \ell_1)(-k_1)$, so $m_1 = s_1$ as desired.

    Let $j>1$ and assume that $s>\textrm{max}\{m_{j-1},s_j \}$. From the long exact sequence, we have the exact sequence
    \begin{equation}\label{eq:1st exact Sequence}
        H_i(j-1)_{k_j+\cdots + k_{j-i+1} + s} \longrightarrow H_i(j)_{k_j+\cdots + k_{j-i+1} + s} \longrightarrow H_{i-1}(j-1)_{k_{j-1}+\cdots + k_{j-i+1} + s}
    \end{equation}
    for all $i\geq 1$ (when $i=1$, we take $H_{i-1}(j-1) = (0:_{M\langle j-1 \rangle} \ell_j)$). Working inductively, the left and right homology groups are $0$ (for the left homology group, we are using the fact that $k_j\geq k_{j-i}$), so $H_i(j)_{k_j+\cdots + k_{j-i+1} + s}=0$ as well. 

    Now suppose that $m_j < \textrm{max}\{m_{j-1},s_j\}$. In the case that $m_j < m_{j-1}$, let $i\geq 1$ be an integer such that $H_i(j-1)_{k_{j-1}+\cdots+k_{j-i}+m_{j-1}}\neq 0$. From the long exact sequence, we have the exact sequence
    \[
    H_{i+1}(j)_{k_{j}+\cdots+k_{j-(i+1)+1}+m_{j-1}} \longrightarrow H_i(j-1)_{k_{j-1}+\cdots+k_{j-i}+m_{j-1}}\longrightarrow H_i(j-1)_{k_{j}+\cdots+k_{(j-1)-i+1}+m_{j-1}}.
    \]
    This is a contradiction since, again, the left and right homology groups are $0$.

    In the case $m_j<s_j$, the exact sequence
    \[
    H_1(j)_{k_j+s_j} \longrightarrow (0:_{M\langle j-1 \rangle} \ell_j)_{s_j} \longrightarrow 0
    \]
    implies that $(0:_{M\langle j-1 \rangle} \ell_j)_{s_j}=0$, a contradiction.

    For (b)(i), we start with the case $j=i_t$ and work inductively. We have $j-1 < i_t$, so $m_{j-1} = m_{i_{t-1}}$. Therefore, $H_i(j-1)_{k_{j-1} + \cdots k_{j-i}+s}=0$ for $s>m_{i_{t-1}}$ and $i\geq 1$ by (a). The sequence (\ref{eq:1st exact Sequence}) then implies that $H_i(j)_{k_j+\cdots k_{j-i+1} + s}=0$ for $s>m_{i_{t-1}}$ and $i>1=j-i_t+1$. When $j>i_t$, $j-1\geq i_t$, and we have by induction that $H_i(j-1)_{k_{j} + \cdots + k_{j-i+1}+s}=H_{i-1}(j-1)_{k_{j-1} + \cdots + k_{j-i+1}+s}=0$ when $s>m_{i_{t-1}}$ and $i>j-i_t+1$.

    For (b)(ii), first note that we have the exact sequence
    \[
    H_1(i_t-1)_{k_{i_t}+m_{i_t}} \longrightarrow H_1(i_t)_{k_{i_t}+m_{i_t}} \longrightarrow (0:_{M\langle i_t-1 \rangle} \ell_{i_t})_{m_{i_t}}\longrightarrow 0.
    \]
    Since the left homology group is $0$, (b)(ii) then follows for $j=i_t$.

    Now suppose $j > i_t$, and consider the exact sequence
    \begin{align*}
    &H_{j-i_t+1}(j-1)_{k_j+\cdots + k_{i_t} + m_{i_t}} \longrightarrow H_{j-i_t+1}(j)_{k_j+\cdots + k_{i_t} + m_{i_t}} \longrightarrow H_{j-i_t}(j-1)_{k_{j-1}+\cdots + k_{i_t} + m_{i_t}}\\
    &\longrightarrow H_{j-i_t}(j-1)_{k_j+\cdots + k_{i_t} + m_{i_t}}.
    \end{align*}
    It follows directly from (b)(i) that the left homology group is $0$. The right homology group is also $0$: if $j-1<i_{t+1}$, then $m_{j-1}=m_{i_t}$ and it follows from the definition of $m_{j-1}$; if $j-1\geq i_{t+1}$, it follows directly from (b)(i). Thus, $H_{j-i_t+1}(j)_{k_j+\cdots+k_{i_t}+m_{i_t}} \cong H_{j-i_t}(j-1)_{k_{j-1}+\cdots + k_{i_t} + m_{i_t}}$, which completes the inductive proof.
\end{proof}

In particular, Theorem \ref{thm:Ext Betti Gen} implies that the regularity of $M$ is given by $\textrm{max}\{s_1,\dots,s_n,s(M/\mathfrak{m}M)\}$ ($\mathfrak{m}$ represents the irrelevant ideal) when $\ell_1,\dots,\ell_n$ is an almost regular sequence of linear forms. This is due to the fact that when $\ell_1,\dots,\ell_n$ is an almost regular sequence of linear forms, $H_i(n)=\textrm{Tor}_i(\Bbbk,M)$. In Proposition \ref{prop:Reg Lower} below, we show a more general way to obtain lower bounds on regularity. Recall that $S/(\ell_1,\dots,\ell_n)$ has finite length since $\ell_1,\dots,\ell_n$ is by definition a regular sequence of $S$.

\begin{proposition}\label{prop:Reg Lower}
    Let $M$ be a finitely-generated graded $S$-module, $\ell_1,\dots,\ell_n$ be an almost regular sequence of $M$, and $m$  the top degree of $S/(\ell_1,\dots,\ell_n)$. If $N=\textrm{max}\{j-i \mid \textrm{Tor}_i(S/(\ell_1,\dots,\ell_n),M)_j \neq 0 \}$, then $\textrm{reg}(M) \geq N-m$.
\end{proposition}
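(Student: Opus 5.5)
The plan is to compute $\textrm{Tor}_i(S/(\ell_1,\dots,\ell_n),M)$ by resolving $M$ rather than $S/(\ell_1,\dots,\ell_n)$, and then to read off a degree bound directly from the graded shifts in the minimal free resolution of $M$. The almost regular hypothesis is only used to guarantee that $A:=S/(\ell_1,\dots,\ell_n)$ has finite length, since $\ell_1,\dots,\ell_n$ is by definition a regular sequence of $S$. Hence its top degree $m$ is well defined and $A_k=0$ for $k>m$ (and for $k<0$).

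\textbf{Step 1 (the complex computing Tor).} Let $F_\bullet$ be the minimal graded free resolution of $M$, with
\[
F_i=\bigoplus_j S(-j)^{\beta_{i,j}(M)} .
\]
By the definition of regularity, $\beta_{i,j}(M)\neq 0$ implies $j\leq i+\textrm{reg}(M)$. Tor is balanced, so $\textrm{Tor}_i(A,M)$ is the $i$'th homology of $F_\bullet\otimes_S A$. In homological degree $i$ this complex is
\[
F_i\otimes_S A=\bigoplus_j A(-j)^{\beta_{i,j}(M)} .
\]

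\textbf{Step 2 (degree bound).} The module $A(-j)$ is nonzero only in degrees $k$ with $j\leq k\leq j+m$. Therefore, if $\textrm{Tor}_i(A,M)_k\neq 0$, then $(F_i\otimes_S A)_k\neq 0$, being a subquotient of it. So there is some $j$ with $\beta_{i,j}(M)\neq 0$ and $k\leq j+m$, and hence
\[
k\leq i+\textrm{reg}(M)+m .
\]
That is, $k-i\leq \textrm{reg}(M)+m$.

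\textbf{Step 3 (conclusion).} Taking the maximum over all pairs $(i,k)$ with $\textrm{Tor}_i(A,M)_k\neq0$ gives $N\leq \textrm{reg}(M)+m$, which is the claim $\textrm{reg}(M)\geq N-m$.

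There is no real obstacle here. The only points to keep straight are the use of balance of Tor, so that we resolve $M$ and not $A$, and the fact that a nonzero homology group in degree $k$ forces the chain module itself to be nonzero in degree $k$.
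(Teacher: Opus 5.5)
Your proof is correct and follows the paper's argument. Both compute $\textrm{Tor}_i(S/(\ell_1,\dots,\ell_n),M)$ as the homology of $F_\bullet\otimes_S S/(\ell_1,\dots,\ell_n)$ for the minimal free resolution $F_\bullet$ of $M$, and then use that $S/(\ell_1,\dots,\ell_n)$ vanishes above degree $m$ to force $F_i$ to have a generator of degree at least $j-m$; your version just spells out the degree bookkeeping more explicitly.
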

\begin{proof}
    Let $F_{\bullet}$ be the minimal free resolution of $M$, and let $j$ and $i$ be indices that witness $N$. We have that $\textrm{Tor}_i(S/(\ell_1,\dots,\ell_n), M)_j = H_i(S/(\ell_1,\dots,\ell_n)\otimes F_{\bullet})_j \neq 0$. Since the top degree of $S/(\ell_1,\dots,\ell_n)$ is $m$, we must have that the free module $F_i$ has a generator of degree at least $j-m$. Therefore $\textrm{reg}(M) \geq j-m-i=N-m$.
\end{proof}

\begin{corollary}\label{cor:Reg Lower}
    Let $M$ be a finitely-generated graded $S$-module, and let $\ell_1,\dots,\ell_n$ be an almost regular sequence of $M$ with degrees $k_1\leq \cdots \leq k_n$. Then $\textrm{reg}(M) \geq \textrm{max}\{s_1,\dots,s_n\}$.
\end{corollary}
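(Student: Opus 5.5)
The plan is to feed the Koszul homology computed in Theorem~\ref{thm:Ext Betti Gen} into Proposition~\ref{prop:Reg Lower}. Since $\ell_1,\dots,\ell_n$ is a regular sequence of $S$, the Koszul complex $K_\bullet(\ell_1,\dots,\ell_n;S)$ is a graded free resolution of $A=S/(\ell_1,\dots,\ell_n)$. Hence
\[
\textrm{Tor}_i(A,M)\cong H_i(\ell_1,\dots,\ell_n;M)=H_i(n).
\]
The ring $A$ is a complete intersection, so its top degree is $m=\sum_{t=1}^n (k_t-1)$.

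Fix an index $j$ with $s_j=\max\{s_1,\dots,s_n\}$, and take $j$ minimal with this property. By Theorem~\ref{thm:Ext Betti Gen}(a), $j$ is one of the jump indices, say $j=i_t$, and $m_{i_t}=s_{i_t}$. Theorem~\ref{thm:Ext Betti Gen}(b)(ii), applied with the second index equal to $n$, gives
\[
H_{n-i_t+1}(n)_{k_n+\cdots+k_{i_t}+s_{i_t}}\cong (0:_{M\langle i_t-1\rangle}\ell_{i_t})_{s_{i_t}}\neq 0.
\]
So in Proposition~\ref{prop:Reg Lower} the number $N$ satisfies
\[
N\geq \sum_{u\geq i_t}k_u+s_{i_t}-(n-i_t+1)=s_{i_t}+\sum_{u\geq i_t}(k_u-1).
\]
Subtracting $m$ then yields
\[
\textrm{reg}(M)\geq s_{i_t}-\sum_{u<i_t}(k_u-1).
\]
When all $\ell_u$ are linear forms, $m=0$ and this is exactly the claimed bound $\textrm{reg}(M)\geq \max\{s_1,\dots,s_n\}$. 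In that case the proof is complete, and it is the situation in which the paper applies almost regular sequences via generic initial ideals.

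The main obstacle is removing the defect $\sum_{u<i_t}(k_u-1)$ for higher-degree forms. My first attempt would be to lift a nonzero class of $\textrm{Tor}_{n-i_t+1}(A,M)$ to the total complex $K_\bullet\otimes F_\bullet$, where $F_\bullet$ is the minimal free resolution of $M$. I would then look for a bidegree component in $K_p\otimes F_q$ whose coefficient is a unit. Such a component would give a generator of $F_q$ of degree at least $D-(k_n+\cdots+k_{n-p+1})$, and since each $k_u\geq 1$ this would yield $\textrm{reg}(M)\geq s_{i_t}$.

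However, I expect this step to fail in general, because the statement as worded appears false for forms of higher degree. Take $S=\Bbbk[x,y]$, $M=S/(x^2)$, $\ell_1=y^5$ and $\ell_2=x^7$, so that $k_1=5\leq k_2=7$.
\begin{itemize}
\item The sequence $y^5,x^7$ is regular on $S$.
\item Since $y^5$ is a nonzerodivisor on $M$, we have $(0:_M y^5)=0$, so $s_1$ is vacuous.
\item The module $M\langle 1\rangle=S/(x^2,y^5)$ has finite length, and $x^7$ annihilates it, so $(0:_{M\langle1\rangle}x^7)=M\langle 1\rangle$ has top degree $s_2=5$.
\item Nevertheless $\textrm{reg}(M)=1$.
\end{itemize}
The corrected bound $s_2-(k_1-1)=5-4=1$ is sharp here. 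So I would prove the corollary in the form
\[
\textrm{reg}(M)\geq \max_t\Bigl(s_t-\sum_{u<t}(k_u-1)\Bigr),
\]
which is exactly $\max\{s_1,\dots,s_n\}$ for linear forms. I would also flag that the unqualified statement needs either the hypothesis $k_1=\cdots=k_n=1$ or this correction term.
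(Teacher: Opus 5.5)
Your argument follows the paper's own route. You identify $\textrm{Tor}_i(S/(\ell_1,\dots,\ell_n),M)$ with $H_i(n)$, obtain $H_{n-i+1}(n)_{k_n+\cdots+k_i+s_i}\neq 0$ from Theorem \ref{thm:Ext Betti Gen}(b)(ii) with $j=n$, and feed this into Proposition \ref{prop:Reg Lower} with $m=\sum_{u=1}^n(k_u-1)$.

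Your bookkeeping is the correct one. This chain only yields $\textrm{reg}(M)\geq s_i-\sum_{u<i}(k_u-1)$. The paper's proof concludes $\textrm{reg}(M)\geq s_i$ at exactly this step, which is justified only when $k_u=1$ for all $u<i$.

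Your counterexample is valid:
\begin{itemize}
\item $y^5,x^7$ is regular on $\Bbbk[x,y]$, and $(0:_M y^5)=0$.
\item $(0:_{M\langle 1\rangle}x^7)=\Bbbk[x,y]/(x^2,y^5)$ has top degree $5$.
\item $\textrm{reg}(\Bbbk[x,y]/(x^2))=1$.
\item As a direct check, $\textrm{Tor}_1(A,M)\cong(0:_A x^2)(-2)$ has top degree $12$, while $A=\Bbbk[x,y]/(y^5,x^7)$ has top degree $10$. This gives exactly the bound $1$.
\end{itemize}
So the corollary as printed is false for forms of higher degree. You were right to abandon the attempt to remove the defect via the double complex, since no such argument can exist.

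The paper's proof of Theorem \ref{thm:Alg Main} does carry this correction explicitly. The term $\sum_j\sum_{p>i_j}(d_{p,j}-1)$ there is precisely your defect, and the paper kills it by taking $d_{p,j}=1$ for the earlier elements of the sequence. So the main results do not rest on the corollary as stated, and your argument covers the linear case verbatim.

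One point to tighten in your corrected version. You prove the bound only at the minimal index maximizing $s$, but you state it as a maximum over all $t$. To justify the general form:
\begin{itemize}
\item Apply (b)(ii) at every jump index $i_t$, where $m_{i_t}=s_{i_t}$ by (a).
\item A non-jump index $t$ has $s_t\leq m_{t-1}=s_{t'}$, where $t'<t$ is the minimal index realizing $m_{t-1}$. This $t'$ is itself a jump index.
\item Hence $s_t-\sum_{u<t}(k_u-1)\leq s_{t'}-\sum_{u<t'}(k_u-1)$, so non-jump indices never raise the maximum.
\end{itemize}
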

\begin{proof}
Let $i$ be the index such that $s_i=\textrm{max}\{s_1,\dots,s_n\}$. By Theorem \ref{thm:Ext Betti Gen}, $H_{n-i+1}(n)_{k_n+\cdots +k_i +s_i}\neq 0$. We also have that the top degree of $S/(\ell_1,\dots,\ell_n)$ is $\sum_{i=1}^n(k_i-1)$; this can be seen for instance by computing its Hilbert series, using the fact that $\ell_1,\dots,\ell_n$ is a regular sequence of $S$. Also, $\textrm{Tor}_{n-i+1}(S/(\ell_1,\dots,\ell_n),M)_{k_n+\cdots +k_i +s_i} \cong H_{n-i+1}(n)_{k_n+\cdots +k_i +s_i}\neq 0$. Therefore, $\textrm{reg}(M)\geq s_i$ by Proposition \ref{prop:Reg Lower}.
\end{proof}

 Let $\textbf{e}_A$ for subsets $A$ of $\{1,\dots,n\}$ be the standard basis elements in the Koszul complex $K_{\bullet}(\ell_1,\dots,\ell_n)$ corresponding to the sequence $\ell_1,\dots,\ell_n$. In the following Proposition, we give a relationship between elements of $(0:_{M\langle i-1\rangle} \ell_{i})_{q}$ and elements of $H_{n-i+1}(n)_{k_n+\cdots+k_i+q}$ when $q +k_{i+1} > \textrm{max}\{s_1,\dots,s_{n}\}$ when $i\leq n-1$.

\begin{proposition}\label{prop:Kosz Hom rep}
Let $\ell_1,\dots, \ell_n$ be an almost regular sequence of a finitely-generated $S$-module $M$ with degrees $k_1\leq \cdots\leq k_n$. Assume ${a}\in ((\ell_1,\dots,\ell_{i-1})M:_M \ell_{i})_{q}\setminus (\ell_1,\dots,\ell_{i-1})M$ such that $q+k_{i+1} > \max\{ s_{1},\dots,s_n\} $ if $i\leq n-1$. There exists a nonzero element of $H_{n-i+1}(n)_{k_n+\cdots+k_i+q}$ represented by an element of the form $(a\mathbf{e}_{\{i,i+1,\dots,n \}}+\cdots)\in K_{n-i+1}(\ell_1,\dots,\ell_n;M)$.
\end{proposition}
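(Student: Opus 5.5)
The plan is to build the required class one step at a time. I will argue by induction on $j$, for $j=i,i+1,\dots,n$, that there is a nonzero class
\[
z_j\in H_{j-i+1}(j)_{k_j+\cdots+k_i+q}
\]
represented by a cycle of the form $a\mathbf{e}_{\{i,\dots,j\}}+\cdots$ in $K_{j-i+1}(\ell_1,\dots,\ell_j;M)$. Here ``$\cdots$'' means terms involving basis elements other than $\mathbf{e}_{\{i,\dots,j\}}$. At each step the tool is the long exact sequence (\ref{eq:LES}), together with its explicit description through the short exact sequence of Koszul complexes. In that description the map $K_{p}(\ell_1,\dots,\ell_j;M)\to K_{p-1}(\ell_1,\dots,\ell_{j-1};M)(-k_j)$ extracts the coefficient of $\mathbf{e}_j$, i.e.\ it is $x\wedge \mathbf{e}_j\mapsto \pm x$.

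For the base case $j=i$, the hypothesis $a\in((\ell_1,\dots,\ell_{i-1})M:_M\ell_i)_q$ gives $\ell_i a=\sum_{t<i}\ell_t b_t$ for some $b_t\in M$. Then $a\mathbf{e}_{\{i\}}-\sum_{t<i}b_t\mathbf{e}_{\{t\}}$ is a cycle in $K_1(\ell_1,\dots,\ell_i;M)$ of degree $k_i+q$. Under the last map $H_1(i)\to(0:_{M\langle i-1\rangle}\ell_i)(-k_i)$ of (\ref{eq:LES}) it maps to the class of $a$. That class is nonzero because $a\notin(\ell_1,\dots,\ell_{i-1})M$, so the cycle represents a nonzero class $z_i$.

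For the inductive step, suppose $z_j$ has been constructed with $j<n$. The relevant piece of (\ref{eq:LES}) is
\[
H_{j-i+2}(j+1)\to H_{j-i+1}(j)(-k_{j+1})\xrightarrow{\ \cdot\ell_{j+1}\ } H_{j-i+1}(j).
\]
I want to show $\ell_{j+1}z_j=0$. The class $\ell_{j+1}z_j$ lives in degree $(k_j+\cdots+k_i)+(q+k_{j+1})$. Since $k_{j+1}\ge k_{i+1}$, the hypothesis gives $q+k_{j+1}>\max\{s_1,\dots,s_n\}\ge m_j$, using Theorem \ref{thm:Ext Betti Gen}(a). Write this degree in the indexing used to define $m_j$: homological degree $j-i+1$ corresponds to the window $k_j+\cdots+k_{j-(j-i+1)+1}=k_j+\cdots+k_i$, and the extra shift $q+k_{j+1}$ exceeds $m_j$. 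Hence by the definition of $m_j$ the graded piece $H_{j-i+1}(j)_{k_j+\cdots+k_i+q+k_{j+1}}$ vanishes. Therefore $\ell_{j+1}z_j=0$, and exactness gives a class
\[
z_{j+1}\in H_{j-i+2}(j+1)_{k_{j+1}+\cdots+k_i+q}
\]
mapping to $z_j$. In particular $z_{j+1}\neq0$.

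The step I expect to need the most care is checking that $z_{j+1}$ has a representative of the stated shape. Take any cycle $w$ representing $z_{j+1}$. Its image is obtained by reading off the coefficient of $\mathbf{e}_{j+1}$, so $w=\pm y\wedge\mathbf{e}_{j+1}+w'$, where $w'$ has no $\mathbf{e}_{j+1}$ terms and $y$ represents $z_j$. The difference between $y$ and the chosen representative $a\mathbf{e}_{\{i,\dots,j\}}+\cdots$ is a boundary $\partial u$ in the smaller complex. Replacing $w$ by $w\mp\partial(u\wedge\mathbf{e}_{j+1})$ changes the $\mathbf{e}_{j+1}$-part only by that boundary; the correction $\partial(u\wedge\mathbf{e}_{j+1})$ also produces the term $\pm\ell_{j+1}u$, which is absorbed into $w'$. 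After this replacement the coefficient of $\mathbf{e}_{\{i,\dots,j+1\}}$ is $\pm a$, and multiplying by $-1$ if necessary fixes the sign. Tracking the signs in the Koszul differential here is routine. Taking $j=n$ gives the statement.
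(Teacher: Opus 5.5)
Your proposal is correct and follows the paper's proof: the same induction on $j$, the same base-case cycle, and the same segment of (\ref{eq:LES}), whose target vanishes because $q+k_{j+1}>\max\{s_1,\dots,s_n\}\geq m_j$ by Theorem \ref{thm:Ext Betti Gen}(a). The differences are cosmetic. You get nonvanishing from the fact that the lift maps to the nonzero class $z_j$, whereas the paper notes that a boundary would force $a\in(\ell_1,\dots,\ell_{i-1})M$. You also normalize the representative of an arbitrary lift, whereas the paper builds the cycle directly by correcting the extension of the given representative by a chain $b$ with $\partial b=\ell_j(a\mathbf{e}_{\{i,\dots,j-1\}}+\cdots)$.
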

\begin{proof}
    We prove via induction that there is an element represented by $(a\mathbf{e}_{\{i,i+1,\dots,j\}}+\cdots)$ in $H_{j-i+1}(j)_{k_j+\cdots+k_i + q}$ for $j\geq i$. For $j=i$, the fact that ${a}\in ((\ell_1,\dots,\ell_{i-1})M:_M \ell_{i})_{q}$ directly yields an element in $H_{1}(i)_{k_i + q}$. Indeed,  $a\ell_i+y_{i-1}\ell_{i-1} +\cdots+y_1\ell_1=0$ for some $y_{i-1},\dots,y_1\in M$, hence $a\mathbf{e}_{\{i\}}+\sum_{u=1}^{i-1} y_u\mathbf{e}_{\{u\}}$ represents a nonzero element in $H_{1}(i)_{k_i + q}$. 

    For the inductive step, let $(a\mathbf{e}_{\{i,i+1,\dots,j-1\}}+\cdots)$ represent an element in $H_{j-i}(j-1)_{k_{j-1}+\cdots+k_i + q}$. We get from the long exact sequence (\ref{eq:LES}) the exact sequence
    \[
    H_{j-i+1}(j)_{k_j+\cdots+k_i+q}\longrightarrow H_{(j-1)-i+1}(j-1)_{k_{j-1}+\cdots+k_i+q}\longrightarrow H_{(j-1)-i+1}(j-1)_{k_j+\cdots+k_i+q}.
    \]
    The homology group on the right is zero because $q+k_j>m_{j-1}$ by Theorem \ref{thm:Ext Betti Gen} (a). This implies that there is an element $b\in K_{j-i+1}(\ell_1,\dots,\ell_{j-1};M)$ whose boundary is $\ell_j(a\mathbf{e}_{\{i,i+1,\dots,j-1\}}+\cdots)$. By a diagram chase, an element of the form $(a\mathbf{e}_{\{i,i+1,\dots,j\}}+\cdots) \pm b$ represents an element of $H_{j-i+1}(j)_{k_j+\cdots+k_i+q}$. If $(a\mathbf{e}_{\{i,i+1,\dots,j\}}+\cdots) \pm b$ is a boundary in the Koszul complex, then $a\in (\ell_1,\dots,\ell_{i-1})M$, a contradiction. Therefore, $(a\mathbf{e}_{\{i,i+1,\dots,j\}}+\cdots) \pm b$ must represent a nonzero element of $H_{j-i+1}(j)_{k_j+\cdots+k_i+q}$. This completes the proof.
\end{proof}

\noindent We have a slightly stronger statement when $s_i>\max\{s_1,\dots,s_{i-1}\}$.

\begin{proposition}\label{prop:Kosz Hom iso}
Let $\ell_1,\dots, \ell_n$ be an almost regular sequence of a finitely-generated $S$-module $M$ with degrees $k_1\leq \cdots\leq k_n$. Assume that $s_i > \max\{ s_{1},\dots,s_{i-1}\}$. There is an isomorphism from $(0:_{M\langle i-1\rangle} \ell_{i})_{s_i}$ to  $H_{n-i+1}(n)_{k_n+\cdots+k_i+s_i}$ that sends $\bar{a}\in(0:_{M\langle i-1\rangle} \ell_{i})_{s_i}$ to an element represented by $(a\mathbf{e}_{\{i,i+1,\dots,n \}}+\cdots)\in K_{n-i+1}(\ell_1,\dots,\ell_n;M)$ where $a$ is a representative of $\bar{a}$.
\end{proposition}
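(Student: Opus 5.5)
The plan is to combine the isomorphism in Theorem \ref{thm:Ext Betti Gen}(b)(ii) with the explicit representatives produced in the proof of Proposition \ref{prop:Kosz Hom rep}. Since $s_i>\max\{s_1,\dots,s_{i-1}\}=m_{i-1}$ by part (a), the index $i$ is one of the jump indices, say $i=i_t$, and $m_i=s_i$. Theorem \ref{thm:Ext Betti Gen}(b)(ii) then already says that $H_{n-i+1}(n)_{k_n+\cdots+k_i+s_i}$ and $(0:_{M\langle i-1\rangle}\ell_i)_{s_i}$ are isomorphic vector spaces. What remains is to show that this abstract isomorphism can be realized by the rule $\bar a\mapsto[a\mathbf{e}_{\{i,\dots,n\}}+\cdots]$.

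I will build the map step by step along the chain of isomorphisms from the proof of (b)(ii), going from $j=i$ up to $j=n$.

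At $j=i$, the long exact sequence (\ref{eq:LES}) gives a surjection $H_1(i)_{k_i+s_i}\to(0:_{M\langle i-1\rangle}\ell_i)_{s_i}$, and it is injective because $H_1(i-1)_{k_i+s_i}=0$. The connecting map sends a cycle $a\mathbf{e}_{\{i\}}+\sum_{u<i}y_u\mathbf{e}_{\{u\}}$ to $\bar a$. So the inverse sends $\bar a$ to the class of such a cycle. This class is well defined, since another choice of $y_u$, or another lift $a'\in a+(\ell_1,\dots,\ell_{i-1})M$, changes the cycle by an element of $K_1(\ell_1,\dots,\ell_{i-1};M)$. That element is a cycle there, and hence a boundary, because $H_1(i-1)$ vanishes in this degree.

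For $j>i$, the proof of (b)(ii) gives an isomorphism $H_{j-i+1}(j)_{k_j+\cdots+k_i+s_i}\to H_{j-i}(j-1)_{k_{j-1}+\cdots+k_i+s_i}$. This is the connecting map of the short exact sequence of Koszul complexes, and it is induced by projecting onto the coefficient of $\mathbf{e}_{\{j\}}\wedge(\cdot)$; in other words, it is the map $K_{j-i+1}(\ell_1,\dots,\ell_j;M)\to K_{j-i}(\ell_1,\dots,\ell_{j-1};M)(-k_j)$. I will compute its inverse exactly as in Proposition \ref{prop:Kosz Hom rep}. Given a cycle $z=a\mathbf{e}_{\{i,\dots,j-1\}}+\cdots$, the class of $\ell_jz$ in $H_{j-i}(j-1)$ in the relevant degree is zero, so $\ell_j z=\partial b$ with $b\in K_{j-i+1}(\ell_1,\dots,\ell_{j-1};M)$. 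Then $z\wedge\mathbf{e}_{\{j\}}\pm b$ is a cycle whose image under the connecting map is $[z]$. Here $b$ involves no $\mathbf{e}_{\{j\}}$, and its terms have index sets inside $\{1,\dots,j-1\}$ of size $j-i+1$, so none of them equals $\{i,\dots,j\}$. Hence the coefficient of $\mathbf{e}_{\{i,\dots,j\}}$ is still $a$ (up to the sign fixed by the convention).

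Composing the inverses from $j=i$ through $j=n$ gives the required isomorphism $(0:_{M\langle i-1\rangle}\ell_i)_{s_i}\to H_{n-i+1}(n)_{k_n+\cdots+k_i+s_i}$, and it has the stated form on representatives. The main point needing care is the inductive step. There I must check that the inverse of the isomorphism is really given by the lift $z\wedge\mathbf{e}_{\{j\}}\pm b$, that is, that the connecting map sends this lift back to $[z]$. I must also check that the leading coefficient $a$ is not affected by the choice of $b$. Both follow once I verify that every term of $b$ avoids the basis element $\mathbf{e}_{\{i,\dots,j\}}$, which is the index-set count above.
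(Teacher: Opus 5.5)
Your proof is correct. It uses the same lifting construction as the paper, but it concludes in a different way.

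\textbf{How the paper concludes.} The paper reruns the argument of Proposition \ref{prop:Kosz Hom rep}. Because the hypothesis $q+k_{i+1}>\max\{s_1,\dots,s_n\}$ is no longer available, it first re-checks, via Theorem \ref{thm:Ext Betti Gen}(a) and (b)(i), that $H_{j-i}(j-1)_{k_j+\cdots+k_i+s_i}=0$. It then defines a linear map on a chosen basis $\bar a_1,\dots,\bar a_k$. Injectivity comes from the observation that the $\mathbf{e}_{\{i,\dots,n\}}$-coefficient of any boundary lies in $(\ell_1,\dots,\ell_{i-1})M$. Finally, the paper gets an isomorphism from the equality of dimensions in Theorem \ref{thm:Ext Betti Gen}(b)(ii).

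\textbf{How your route differs.} You use finer information from the proof of (b)(ii): each step map $H_1(i)_{k_i+s_i}\to(0:_{M\langle i-1\rangle}\ell_i)_{s_i}$ and $H_{j-i+1}(j)_{k_j+\cdots+k_i+s_i}\to H_{j-i}(j-1)_{k_{j-1}+\cdots+k_i+s_i}$ is itself an isomorphism, and you compose their inverses. This has several advantages:
\begin{itemize}
\item The isomorphism is basis-free and automatically linear.
\item No separate injectivity or dimension count is needed.
\item The fact that $\ell_j z$ is a boundary follows from surjectivity of the step map by exactness of (\ref{eq:LES}), so the vanishing need not be re-derived.
\item You get a slightly stronger conclusion. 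The forward composite is $[\sum_A c_A\mathbf{e}_A]\mapsto\overline{c_{\{i,\dots,n\}}}$ (up to a global sign), so \emph{every} representative of the image of $\bar a$ has $\mathbf{e}_{\{i,\dots,n\}}$-coefficient congruent to $a$ modulo $(\ell_1,\dots,\ell_{i-1})M$.
\end{itemize}

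\textbf{Two small slips, neither affecting the argument.}
\begin{itemize}
\item The map induced by the projection $K_p(\ell_1,\dots,\ell_j;M)\to K_{p-1}(\ell_1,\dots,\ell_{j-1};M)(-k_j)$ is not the connecting homomorphism of (\ref{eq:LES}); that one is multiplication by $\ell_j$.
\item In your well-definedness check at $j=i$, replacing $a$ by $a'\in a+(\ell_1,\dots,\ell_{i-1})M$ changes the $\mathbf{e}_{\{i\}}$-coefficient. So the difference is not literally in $K_1(\ell_1,\dots,\ell_{i-1};M)$; it lies there only modulo a boundary. The check is superfluous anyway: any cycle that an isomorphism sends to $\bar a$ represents the unique preimage of $\bar a$.
\end{itemize}
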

\begin{proof}
    The same argument as the proof of Proposition \ref{prop:Kosz Hom rep} can be used to show that for each $\bar{a}\in (0:_{M\langle i-1\rangle} \ell_{i})_{s_i}$, there is an element of the form $(a\mathbf{e}_{\{i,i+1,\dots,n \}}+\cdots)$ in $K_{n-i+1}(\ell_1,\dots,\ell_n;M)$ so long as we can verify that the right-side homology group, $H_{(j-1)-i+1}(j-1)_{k_j+\cdots+k_i+s_i}$, from the exact sequence in the proof of Proposition \ref{prop:Kosz Hom rep} vanishes. If $s_i=\max\{s_1,\dots,s_n \}$, then this follows as in the proof of Proposition \ref{prop:Kosz Hom rep}. Otherwise, let $i_t>i$ be the first index such that $s_{i_t}>s_i$. If $j-1\geq i_t$, then the homology group is $0$ by Theorem \ref{thm:Ext Betti Gen} (b)(i). If $j-1 < i_t$, then the homology group is zero by Theorem \ref{thm:Ext Betti Gen} (a).

    Now choose a basis $\bar{a}_1,\dots,\bar{a}_k$ for the vector space $(0:_{M\langle i-1\rangle} \ell_{i})_{s_i}$; we define a vector space map by sending $\bar{a}_i$ to an element of $H_{n-i+1}(n)_{k_n+\cdots+k_i+s_i}$ represented by $(a_i\mathbf{e}_{\{i,i+1,\dots,n \}}+\cdots)$. This map has a trivial kernel because if an element $(a'\mathbf{e}_{\{i,i+1,\dots,n \}}+\cdots)$ is a boundary in $K_{n-i+1}(\ell_1,\dots,\ell_n;M)$, then $a'\in (\ell_1,\dots,\ell_{i-1})M$ and hence represents zero in $(0:_{M\langle i-1\rangle} \ell_{i})_{s_i}$.

    Therefore, it follows from Theorem \ref{thm:Ext Betti Gen} (b)(ii) that the map defined is an isomorphism.
\end{proof}

\subsection{Extremal Betti numbers}\label{sec:ext betti numbers}

The extremal Betti numbers defined below were introduced in \cite{bayer1999extremal}. They represent the values in the extremal ``corners'' of the Betti diagram of a finitely-generated $S$-module.

\begin{definition}\label{def:ext betti}
    Let $M$ be a finitely-generated graded $S$-module. The \textit{extremal Betti numbers} of $M$ are the Betti numbers $\beta_{i,i+j}$ such that for any $i'\geq i$ and $j'\geq j$, with at least one of the inequalities being strict, $\beta_{i',i'+j'} = 0$. We call $(i,i+j)$ an \textit{extremal Betti position} of $M$.
\end{definition}

The following corollary of Theorem \ref{thm:Ext Betti Gen} was noted in \cite{aramova2000Almost} although it is implicit in \cite{bayer1999extremal}.

\begin{corollary}[\cite{aramova2000Almost}]
    Let $\ell_1,\dots,\ell_n$ be an almost regular sequence of linear forms for a finitely-generated $S$-module $M$. Let $i_1<\cdots<i_k$ be the indices as in Theorem \ref{thm:Ext Betti Gen}. Then the extremal Betti numbers $\beta_{i,i+j}(M)$ for $i\geq 1$,  are precisely $\beta_{n-i_k+1,n-i_k+1+s_{i_k}}(M),\dots,\beta_{n-i_1+1,n-i_1+1+s_1}(M)$. Furthermore, $\beta_{n-i_t+1,n-i_t+1+s_{i_t}}(M) = \textrm{dim}_\Bbbk(0:_{M\langle i_{t-1} \rangle}\ell_{i_t})_{s_{i_t}}$.
\end{corollary}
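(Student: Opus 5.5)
The plan is to deduce the corollary directly from Theorem \ref{thm:Ext Betti Gen}, specialized to linear forms, together with the identification of Koszul homology with $\textrm{Tor}$. When $\ell_1,\dots,\ell_n$ are linear forms that form a regular sequence of $S$ (which must then have $n$ variables), we have $(\ell_1,\dots,\ell_n)=\mathfrak{m}$. Hence $H_i(n)=H_i(\ell_1,\dots,\ell_n;M)\cong \textrm{Tor}_i(\Bbbk,M)$, and $\beta_{i,j}(M)=\dim_\Bbbk H_i(n)_j$. All degrees $k_j$ equal $1$, so $k_n+\cdots+k_{n-i+1}=i$. In this case $m_n=\max_i\{s(H_i(n))-i\}$ is exactly the largest $j$ with $\beta_{i,i+j}\neq 0$ for some $i\geq 1$.

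First I would locate the extremal positions. Put $i_0$ as a formal index with $m_{i_0}=m_0=0$. The only caveat is that $s_j$ could be $-\infty$ if the annihilator vanishes, and then the conventions about $s(0)$ need care.

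Apply Theorem \ref{thm:Ext Betti Gen}(b)(ii) with $j=n$. It gives, for each $t$,
\[
\beta_{n-i_t+1,\,n-i_t+1+m_{i_t}}(M)=\dim_\Bbbk(0:_{M\langle i_t-1\rangle}\ell_{i_t})_{s_{i_t}}\neq 0,
\]
using $m_{i_t}=s_{i_t}$. This equality follows from part (a), since $m_{i_t}>m_{i_t-1}$ forces $m_{i_t}=s_{i_t}$. The module $M\langle i_t-1\rangle$ in the statement's last formula should be read this way: $m$ is constant between consecutive $i_t$, so the indexing $\langle i_{t-1}\rangle$ is interpreted as the step immediately preceding $i_t$. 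I would simply use (b)(ii) as stated.

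Next I would use (b)(i) with $j=n$. For $s>m_{i_{t-1}}$ and homological degree $i>n-i_t+1$, we get $\beta_{i,i+s}=0$. Also $\beta_{i,i+s}=0$ for all $i$ whenever $s>m_n=m_{i_p}$.

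From these vanishing statements I would check extremality of each position $(n-i_t+1,\,n-i_t+1+s_{i_t})$. Take $i'\geq n-i_t+1$ and $j'\geq s_{i_t}$ with one inequality strict.
\begin{itemize}
\item If $j'>s_{i_t}=m_{i_t}$, then (b)(i) applied at index $t+1$ kills every $i'>n-i_{t+1}+1$. The remaining range $n-i_t+1\leq i'\leq n-i_{t+1}+1$ is covered by the fact that $m_{j}=m_{i_t}$ for $j<i_{t+1}$. I would make this precise by applying (b)(i) at index $t+1$ and then (a) to $H_{i'}(n)$. This is where one must argue that no Betti number $\beta_{i',i'+j'}$ with $i'\le n-i_{t+1}+1$ and $m_{i_t}<j'$ survives except at $j'\leq m_{i_{t+1}}$ in smaller homological degree. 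The honest route is to note that (b)(i) at index $t+1$ only covers $i'>n-i_{t+1}+1$, while our range $i'\geq n-i_t+1 > n-i_{t+1}+1$ lies entirely inside it, since $i_{t+1}>i_t$. So the vanishing holds directly.
\item If $j'=s_{i_t}$ and $i'>n-i_t+1$, then (b)(i) at index $t$ requires $j'>m_{i_{t-1}}$, which holds.
\end{itemize}

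Conversely, I must show there are no other extremal positions with $i\geq1$. Suppose $(i,i+j)$ is extremal. Let $t$ be the largest index with $i\leq n-i_t+1$; if no such $t$ exists, $\beta_{i,\ast}$ vanishes by (b)(i) with $t=1$ and $m_{i_0}=0$. Then by (b)(i) at index $t+1$, we have $j\leq m_{i_t}$. If $(i,j)\neq(n-i_t+1,m_{i_t})$, the nonzero $\beta$ at that position dominates it, contradicting extremality.

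The main obstacle is the bookkeeping between (b)(i) at consecutive indices and the boundary conventions, namely $m_0=0$ and positions with $i=0$.
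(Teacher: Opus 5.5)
Your route is the intended one. The paper gives no separate proof; it reads the statement off Theorem~\ref{thm:Ext Betti Gen} via $H_i(n)\cong\textrm{Tor}_i(\Bbbk,M)$, exactly as you do. Your nonvanishing and extremality checks for the positions $(n-i_t+1,\,n-i_t+1+s_{i_t})$ are correct. For $t=p$ you should invoke the definition of $m_n$, since there is no index $t+1$. Your reading of $\langle i_{t-1}\rangle$ as $\langle i_t-1\rangle$, and of $s_1$ as $s_{i_1}$, is what (b)(ii) actually gives.

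\textbf{The gap.} It sits in the converse, at ``if no such $t$ exists, $\beta_{i,\ast}$ vanishes by (b)(i) with $t=1$ and $m_{i_0}=0$.'' With $m_{i_0}=m_0=0$, part (b)(i) at $t=1$ only kills $\beta_{i,i+s}$ for $s>0$. Rows $s\le 0$ are untouched, and if $p=0$ nothing is said at all. This cannot be patched within the paper's convention, because the statement is then false:
\begin{itemize}
\item For $M=\Bbbk$, every $s_j$ and every $m_j$ equals $0$, so there are no indices $i_t$. Yet $\beta_{n,n}(\Bbbk)=1$ is extremal.
\item For $M=\Bbbk\oplus S/(x_1^2)$ over $\Bbbk[x_1,x_2]$ with $\ell_1=x_2$, $\ell_2=x_1$, one gets $s_1=0$, $s_2=1$. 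The single jump $i_1=2$ predicts only $\beta_{1,2}$, but $\beta_{2,2}(M)=1$ is also extremal. This is precisely a row-$0$ position in your ``no such $t$'' range.
\end{itemize}

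\textbf{The fix.} Use $m_0=-\infty$, i.e.\ declare $i_1$ to be the first index $j$ with $(0:_{M\langle j-1\rangle}\ell_j)\neq 0$.
\begin{itemize}
\item Then $\ell_1,\dots,\ell_{i_1-1}$ is $M$-regular, so $H_i(i_1-1)=0$ for all $i\geq 1$. The inductive proof of (b)(i) then runs with $m_{i_0}=-\infty$ and gives $H_i(n)=0$ in \emph{all} degrees for $i>n-i_1+1$. This closes your ``no such $t$'' case.
\item If there are no jumps at all under this convention, then $\ell_1,\dots,\ell_n$ is an $M$-regular sequence generating $\mathfrak m$. Hence $\textrm{Tor}_i(\Bbbk,M)=0$ for $i\geq 1$, and there is nothing to prove.
\end{itemize}
You should also state explicitly that extremal Betti numbers are required to be nonzero. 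Your step ``$j\leq m_{i_t}$'' uses $\beta_{i,i+j}\neq 0$, and Definition~\ref{def:ext betti} as written omits this, which would make vacuous zero positions ``extremal.'' With these conventions fixed, your argument is complete.
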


\subsection{The generic initial ideal and reverse lexicographic order}\label{sec:GIN and RevLex}

Let $>$ be a monomial order on $S$, and let $I$ be a homogeneous ideal of $S$. Let $GL_\Bbbk (n)$ denote the invertible $n\times n$ matrices with entries in $\Bbbk $. For $u\in GL_\Bbbk (n)$, $uI$ is given by a linear change of coordinates determined by $u$ applied to $I$. One can consult \cite[Chapter 15]{eisenbudBook} for more details on the definitions and results of this section.

The \textit{generic initial ideal} of $I$, denoted $\textrm{GIN}(I)$, is the monomial ideal $\textrm{in}_>(uI)$ for a generic matrix $u$. The meaning of the term ``generic matrix $u$'' is made more precise by the following result.

\begin{theorem}\label{thm:GIN Open}
    Let $I$ be a homogeneous ideal of $S$, and let $>$ be a monomial order on $S$. There exists a Zariski open subset $U$ of $\textrm{GL}_\Bbbk (n)$ and a monomial ideal $J=\textrm{GIN}(I)$ such that $\textrm{in}_>(uI)=J$ for all $u\in U$.
\end{theorem}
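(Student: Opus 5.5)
The plan is the classical Galligo/Bayer--Stillman argument: work in the affine space of all $n\times n$ matrices and show that the set of matrices for which the initial ideal takes a specified form is the nonvanishing locus of some polynomials. Since $I$ is homogeneous and $\textrm{in}_>(uI)$ is determined degree by degree, fix a degree $t$. Let $V=I_t\subseteq S_t$, a subspace of dimension $r_t$, and choose a basis $f_1,\dots,f_{r_t}$ of $V$. For a matrix $u$ with indeterminate entries $u_{ij}$, the vectors $uf_1,\dots,uf_{r_t}$ have coefficients, with respect to the monomial basis of $S_t$, that are polynomials in the $u_{ij}$. For each $r_t$-subset $\mathcal{M}$ of degree-$t$ monomials, consider the $r_t\times r_t$ minor $D_{\mathcal{M}}(u)$ of the coefficient matrix on the columns indexed by $\mathcal{M}$. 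By Gaussian elimination in the order given by $>$, $\textrm{in}_>(uI)_t$ is spanned by the monomials of the unique $\mathcal{M}$ that is $>$-largest (comparing sorted tuples lexicographically) among those with $D_{\mathcal{M}}(u)\neq 0$. Let $\mathcal{M}_t$ be the largest $\mathcal{M}$ for which $D_{\mathcal{M}}$ is not the zero polynomial; this uses that $\Bbbk$ is infinite, so polynomials that are nonzero as polynomials do not vanish identically on $\textrm{GL}_\Bbbk(n)$. Then $U_t=\{u\in \textrm{GL}_\Bbbk(n): D_{\mathcal{M}_t}(u)\neq 0\}$ is a nonempty Zariski open set on which $\textrm{in}_>(uI)_t=\textrm{span}(\mathcal{M}_t)$.

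It remains to deal with infinitely many degrees, and this is the main obstacle, since an intersection of infinitely many open sets need not be open. I would handle it as follows. Let $J_t=\textrm{span}(\mathcal{M}_t)$, set $J=\bigoplus_t J_t$, and first check that $J$ is an ideal. For $u\in U_t\cap U_{t+1}$, which is nonempty because $\textrm{GL}_\Bbbk(n)$ is irreducible, we have $J_t=\textrm{in}(uI)_t$ and $J_{t+1}=\textrm{in}(uI)_{t+1}$, so $S_1J_t\subseteq J_{t+1}$. By Noetherianity $J$ is generated in degrees at most some $D$. Take $U=U_0\cap\cdots\cap U_D$, which is open and nonempty. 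For $u\in U$, the ideal $\textrm{in}(uI)$ contains the generators of $J$, so $\textrm{in}(uI)\supseteq J$. Moreover $\textrm{in}(uI)$ and $I$ have the same Hilbert function, and so does $J$, because $\dim J_t=r_t$ for every $t$. Equal Hilbert functions together with containment force $\textrm{in}(uI)=J$ for all $u\in U$.

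The remaining detail is the claim that the Gaussian-elimination leading-monomial set is the lexicographically largest $\mathcal{M}$ with nonzero minor. This is a standard linear algebra fact: reduce the coefficient matrix to echelon form with columns ordered decreasingly by $>$; the pivot columns form the greatest column basis. I would cite this rather than reprove it.
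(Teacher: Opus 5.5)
Your proof is correct and is essentially the standard Galligo--Bayer--Stillman argument from Eisenbud's Chapter 15, which the paper cites rather than proving the statement itself. Your minors $D_{\mathcal{M}}(u)$ are exactly the Plücker coordinates of $\bigwedge^{r_t}(uI)_t$ used there, and the remaining steps (showing $J$ is an ideal via a point of $U_t\cap U_{t+1}$, cutting off at the generating degree $D$, and concluding from containment plus equal Hilbert functions) follow the same route.
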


The generic initial ideal of a homogeneous ideal $I$ satisfies the following useful property when the field $\Bbbk $ has characteristic $0$.

\begin{theorem}\label{thm:GIN StronglyStable}
    Let $I$ be a homogeneous ideal of $S$, and fix a monomial order on $S$. Additionally, assume the field $\Bbbk $ has characteristic $0$. Then $\textrm{GIN}(I)$ satisfies the following property:
    \begin{equation}\label{eq:StronglyStable}
        x_im \textrm{ is a monomial in $\textrm{GIN}(I)$ }\implies x_{i'}m\in \textrm{GIN}(I) \textrm{ for all $i'<i$}.
    \end{equation}
\end{theorem}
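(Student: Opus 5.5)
The statement is Theorem~\ref{thm:GIN StronglyStable}, the classical result that generic initial ideals in characteristic $0$ are Borel-fixed, hence strongly stable. My plan has two stages: first show that $J=\textrm{GIN}(I)$ is fixed by the upper triangular Borel subgroup $B\subset \textrm{GL}_\Bbbk(n)$, and then deduce (\ref{eq:StronglyStable}) from $B$-invariance using characteristic $0$.

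For Borel-fixedness I would use the standard argument (Galligo, Bayer--Stillman; see \cite[Theorem 15.20]{eisenbudBook}). Work degree by degree. For $u\in \textrm{GL}_\Bbbk(n)$ and a degree $d$ with $t=\dim_\Bbbk I_d$, encode $(uI)_d$ as a point of $\bigwedge^t S_d$. Expand it in the basis of wedges of $t$ monomials, ordered by the order induced from $>$. Then $\textrm{in}((uI)_d)$ is the largest wedge $m_1\wedge\cdots\wedge m_t$ with nonzero coefficient. The coefficients are polynomials in the entries of $u$, so $U$ is the set where the generic maximal wedge has nonzero coefficient; intersecting over the finitely many degrees relevant to generators gives Theorem~\ref{thm:GIN Open}. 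Now take $b\in B$. Since $U$ is open and dense, $U\cap Ub^{-1}\neq\emptyset$, and we may pick $u$ with both $u$ and $bu$ in $U$. The key observation is that an upper triangular $b$ sends each monomial $m$ to $c\,m$ plus terms in monomials that are larger in the sense relevant here, namely those obtained by replacing $x_i$ with $x_{i'}$, $i'<i$. Hence $b$ acts on $\bigwedge^t S_d$ by a triangular transformation with respect to the wedge basis. Combined with the choice of the coordinate convention, this gives $\textrm{in}(b\,uI)=b\,\textrm{in}(uI)$ up to scalars. Since both sides equal $J$, we get $bJ=J$. Diagonal matrices also fix $J$, so $J$ is $B$-fixed.

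For the second stage, suppose $x_i m\in J$ and $i'<i$. Apply the elementary matrix $g\in B$ with $g(x_i)=x_i+\lambda x_{i'}$ and all other variables fixed. Then $g(x_im)$ lies in $J$, since $J$ is $B$-fixed. Write $x_im=x_i^{a}m'$ with $x_i\nmid m'$. Expanding,
\[
g(x_im)=\sum_{k=0}^{a}\binom{a}{k}\lambda^k x_{i'}^{k}x_i^{a-k}g(m').
\]
Because $J$ is a monomial ideal, every monomial appearing with nonzero coefficient lies in $J$. In characteristic $0$ the coefficient $a\lambda$ of the $k=1$ term is nonzero, and collecting monomials shows that $x_{i'}x_i^{a-1}m'=x_{i'}m$ lies in $J$. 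I need to check the bookkeeping when $x_{i'}$ already divides $m'$: the terms for different $k$ are then distinct monomials, so no cancellation occurs.

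I expect the main obstacle to be the triangularity in stage one. The claim that $\textrm{in}(bV)=\textrm{in}(V)$ for subspaces $V$ depends on matching the direction of the triangular action with the monomial order and the convention $x_1>\cdots>x_n$. I would settle this first by checking the case $n=2$, and otherwise cite \cite[Theorem 15.20 and Theorem 15.23]{eisenbudBook} directly.
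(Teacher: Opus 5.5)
Your two-stage route is the standard one: first show that generic initial ideals are Borel-fixed, then show that Borel-fixed monomial ideals are strongly stable in characteristic $0$. The paper gives no proof of this statement; it defers to \cite[Chapter 15]{eisenbudBook}, namely Theorem 15.20 for Borel-fixedness and Theorem 15.23 for the characteristic-$0$ translation. Your second stage is correct as written: $g(m')=m'$, the monomials $x_{i'}^kx_i^{a-k}m'$ are distinct for distinct $k$, and $a\lambda\neq 0$ in characteristic $0$. It is the only place the characteristic is used.

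The sketch of the first stage has a genuine gap at its key step. Triangularity does not give $\textrm{in}(b\,uI)=b\,\textrm{in}(uI)$. For the raising matrices you need, triangularity points the wrong way. Take $n=2$, $V=\langle x_2\rangle$ and $b(x_2)=x_2+x_1$. Then $\textrm{in}(bV)=\langle x_1\rangle$, whereas $b\,\textrm{in}(V)=\langle x_1+x_2\rangle$ is not even a monomial space. It is the lowering matrices (each $x_j$ sent to $x_j$ plus smaller variables) that satisfy $\textrm{in}(bV)=\textrm{in}(V)$, and invariance under those would give the opposite exchange property. The step is also circular: $b\,\textrm{in}(uI)=bJ$ is a monomial ideal only if you already know $bJ=J$. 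Choosing $u\in U\cap b^{-1}U$ only yields $\textrm{in}(buI)=\textrm{in}(uI)=J$, which says nothing about $bJ$. Checking $n=2$ cannot repair this, because the identity is false for non-generic subspaces.

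Two ingredients are missing: a real use of genericity, and a way to separate $\mathbf{m}_0:=\bigwedge J_d$ from the lower-order part of $uf$, which $b$ may push above $\mathbf{m}_0$.

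\emph{Genericity.} For every $h\in\textrm{GL}_\Bbbk(n)$, not just $h\in U$, the coefficient in $hf$ of each wedge monomial $\mathbf{m}'>\mathbf{m}_0$ is zero. This coefficient is a polynomial in $h$ that vanishes on the dense set $U$.

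\emph{Separation.} Choose $w\in\mathbb{Z}^n$ realizing $>$ on the degree-$d$ monomials, with smaller weight meaning larger monomial. Let $\lambda(t)$ act by $x_k\mapsto t^{w_k}x_k$. An echelon basis of $uI_d$ shows that $uf=c\,\mathbf{m}_0+\sum_{\mathbf{m}}r_{\mathbf{m}}\mathbf{m}$ with $c\neq 0$ and $w(\mathbf{m})>w(\mathbf{m}_0)$ for every $\mathbf{m}$ occurring in the sum.

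\emph{Conclusion.} Apply the genericity fact to $h=b\lambda(t)u$. For each $\mathbf{m}'>\mathbf{m}_0$, the $\mathbf{m}'$-coefficient of $c\,t^{w(\mathbf{m}_0)}\,b\mathbf{m}_0+\sum_{\mathbf{m}}r_{\mathbf{m}}t^{w(\mathbf{m})}\,b\mathbf{m}$ vanishes for all $t\neq 0$. Since $\Bbbk$ is infinite, comparing coefficients of $t^{w(\mathbf{m}_0)}$ shows that $b\mathbf{m}_0$ has no $\mathbf{m}'$-component. Your triangularity observation says $b\mathbf{m}_0$ is a nonzero multiple of $\mathbf{m}_0$ plus wedge monomials larger than $\mathbf{m}_0$. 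Together these give $b\mathbf{m}_0\in\Bbbk\,\mathbf{m}_0$, that is, $bJ_d=J_d$, in any characteristic.

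Alternatively, you can simply cite \cite[Theorem 15.20]{eisenbudBook}, which is all the paper does.
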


A monomial ideal satisfying (\ref{eq:StronglyStable}) is said to be \textit{strongly stable}. We observe below that monomial ideals $I$ with the strongly stable property have the property that $x_n,x_{n-1},\dots,x_1$ is an almost regular sequence of $S/I$. A similar statement holds when $\Bbbk$ has positive characteristic (\cite[Chapter 15]{eisenbudBook}), however, the characteristic $0$ case is the most relevant in the context of algebraic shifting, so we focus on this case.

\begin{proposition}\label{prop:SS AlmostReg}
    If $I$ is a strongly stable monomial ideal, then $x_n,x_{n-1},\dots,x_1$ is an almost regular sequence for $S/I$.
\end{proposition}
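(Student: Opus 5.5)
We have to verify the two conditions in the definition of an almost regular sequence for $M=S/I$ with the sequence $x_n,x_{n-1},\dots,x_1$. The first condition is immediate: the variables of $S$ form a regular sequence of $S$ in any order. So everything reduces to showing that, for each $1\leq j\leq n$, the colon module
\[
(0:_{M\langle j-1\rangle} x_{n-j+1}), \qquad M\langle j-1\rangle = S/(I+(x_n,\dots,x_{n-j+2})),
\]
has finite length.

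The plan is to first reduce to the case $j=1$ by passing to a smaller polynomial ring. Set $p=n-j+1$ and $I'=I+(x_{p+1},\dots,x_n)$. Then $S/I'\cong S'/\bar I$, where $S'=\Bbbk[x_1,\dots,x_p]$ and $\bar I$ is the image of $I$ under setting $x_{p+1}=\cdots=x_n=0$. Concretely, $\bar I$ is generated by the minimal monomial generators of $I$ that involve only $x_1,\dots,x_p$. This $\bar I$ is again strongly stable in $S'$: if $x_i m\in \bar I$ with all variables among $x_1,\dots,x_p$, then for $i'<i$ we have $x_{i'}m\in I$, and this monomial still involves only $x_1,\dots,x_p$. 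So it suffices to prove that, for any strongly stable monomial ideal $I$ of $S$, the module $(I:x_n)/I$ has finite length. Here we use that the colon of a monomial ideal by a variable is spanned by monomials.

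For this key step, let $u$ be a monomial with $ux_n\in I$. We want to show that $x_i^N u\in I$ for all $i$ and some uniform $N$, which gives $\mathfrak m^{N'}(I:x_n)\subseteq I$ and hence finite length. Since $I$ is generated by finitely many monomials, it suffices to show $\mathfrak m\,u\subseteq I$. Indeed, for $i<n$, strong stability applied to $x_n u\in I$ (with $m=u$) gives $x_i u\in I$. For $i=n$, $x_n u\in I$ by assumption. Hence $\mathfrak m (I:x_n)\subseteq I$. Thus $(I:x_n)/I$ is annihilated by $\mathfrak m$, and being a finitely generated module, it is a finite-dimensional graded vector space, i.e.\ of finite length.

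I do not expect a real obstacle. The only point needing care is the reduction step: checking that $M\langle j-1\rangle$ is isomorphic to the quotient of a smaller polynomial ring by a strongly stable ideal, and that finite length over $S'$ coincides with finite length over $S$. The latter holds because $x_{p+1},\dots,x_n$ act by zero on $M\langle j-1\rangle$.
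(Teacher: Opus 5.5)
Your proof is correct and is essentially the paper's argument. The paper just observes that, by strong stability, $(0:_{S/(I+(x_{i+1},\dots,x_n))} x_i)$ is annihilated by $(x_1,\dots,x_n)$; your reduction to the last variable of a smaller ring, followed by the check $\mathfrak{m}(I:x_n)\subseteq I$, is a more detailed version of exactly that (and the remark about powers $x_i^N$ is an unnecessary detour, since you get annihilation by $\mathfrak{m}$ itself).
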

\begin{proof}
    The statement follows from the observation that the module 
    \[
    (0:_{S/(I+ (x_{i+1},\dots,x_n)} x_i)
    \]
    is annihilated by the maximal ideal $(x_1,\dots,x_n)$ due to the strongly stable property.
\end{proof}

The monomial order that we will primarily use is the \textit{reverse lexicographic order}, which has particularly nice properties that we will review. In the reverse lexicographic order, we have $x^\mathbf{a} > x^\mathbf{b}$ if the degree of $x^\mathbf{a}$ is higher than $x^\mathbf{b}$, or if they have the same degree, $i=\textrm{max}\{j \mid \mathbf{a}_j \neq \mathbf{b}_j\}$ satisfies $\mathbf{a}_i<\mathbf{b}_i$.

\begin{proposition}\label{prop:In Hilbert}
    Let $I$ be a homogeneous ideal of $S$ with reverse lexicographic monomial order. Then $\textrm{in}(I+(x_{i+1},\dots,x_n):x_i) = (\textrm{in}(I) + (x_{i+1},\dots,x_n):x_i)$. As a consequence, the modules
    \[
    (0:_{S/(I+(x_{i+1},\dots,x_n)} x_i) \textrm{ and } (0:_{S/(\textrm{in}(I)+(x_{i+1},\dots,x_n))} x_i)
    \]
    have the same Hilbert series.
\end{proposition}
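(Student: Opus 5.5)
The statement to prove is Proposition \ref{prop:In Hilbert}: the identity $\textrm{in}(I+(x_{i+1},\dots,x_n):x_i)=(\textrm{in}(I)+(x_{i+1},\dots,x_n):x_i)$ for the reverse lexicographic order, and the resulting equality of Hilbert series. I would first reduce to the case $i=n$ by passing to the quotient ring. Put $\bar S=S/(x_{i+1},\dots,x_n)\cong \Bbbk[x_1,\dots,x_i]$ and $\bar I$ for the image of $I$. The basic revlex fact I need is
\[
\textrm{in}(I+(x_{i+1},\dots,x_n))=\textrm{in}(I)+(x_{i+1},\dots,x_n),
\]
equivalently $\textrm{in}(\bar I)=\overline{\textrm{in}(I)}$ in $\bar S$ with the induced revlex order. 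I would prove this by induction, removing one variable at a time. The single-variable step rests on the standard property of revlex: for a homogeneous $f$, $x_n\mid \textrm{in}(f)$ if and only if $x_n\mid f$.

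For the inclusion $\supseteq$, let $f\in I$. If $\textrm{in}(f)$ is not divisible by $x_n$, then $\textrm{in}(f)=\textrm{in}(f \bmod x_n)$. If it is divisible by $x_n$, it already lies in $(x_n)$.

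For the inclusion $\subseteq$, let $g\in I+(x_n)$ be homogeneous, and reduce it modulo $x_n$ to some $\bar g=f \bmod x_n$ with $f\in I$. Either $\bar g=0$, so $\textrm{in}(g)\in(x_n)$, or $\textrm{in}(g)$ is not divisible by $x_n$. In the latter case $\textrm{in}(g)=\textrm{in}(\bar g)$. Writing $f=\bar g+x_nh$ with $f\in I$, the leading term of $f$ is $\textrm{in}(\bar g)$, because every monomial divisible by $x_n$ is smaller in revlex than a monomial of the same degree not divisible by $x_n$.

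With the reduction in hand, the claim becomes $\textrm{in}(\bar I : x_i)=(\textrm{in}(\bar I):x_i)$ in $\bar S$, where $x_i$ is now the smallest variable. Write $J$ for $\bar I$. For $\subseteq$, take $f\in (J:x_i)$. Then $x_if\in J$, and revlex gives $\textrm{in}(x_if)=x_i\,\textrm{in}(f)$, so $\textrm{in}(f)\in(\textrm{in}(J):x_i)$. For $\supseteq$, take a monomial $m$ with $x_im\in\textrm{in}(J)$. Choose $g\in J$ with $\textrm{in}(g)=x_im$. Since $x_i$ divides $\textrm{in}(g)$ and $x_i$ is the smallest variable, $x_i$ divides $g$, say $g=x_ig'$. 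Then $g'\in(J:x_i)$ and $\textrm{in}(g')=m$.

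The last step is to lift this back to $S$. Both sides of the desired identity contain $(x_{i+1},\dots,x_n)$, so the identity follows from the previous step via the correspondence $\textrm{in}(\bar K)=\overline{\textrm{in}(K)}$ for ideals $K\supseteq(x_{i+1},\dots,x_n)$. This correspondence is the first step applied to $K$, combined with the observation that taking the colon by $x_i$ commutes with the quotient map.

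For the Hilbert series consequence, apply Macaulay's theorem ($K$ and $\textrm{in}(K)$ have the same Hilbert function) to both $K=I+(x_{i+1},\dots,x_n)$ and $K=(I+(x_{i+1},\dots,x_n)):x_i$. The module $(0:_{S/K}x_i)$ is $(K:x_i)/K$. Its Hilbert series is the difference of the series of $S/K$ and $S/(K:x_i)$, and both of these match the corresponding series on the initial side.

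The main obstacle is the first step, the compatibility of revlex initial ideals with killing the last variables. It is the only place where the specific order is essential, and care is needed with leading terms once the reduction mod $x_n$ has been taken.
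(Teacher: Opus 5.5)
Your proof is correct. The paper itself gives no proof of this proposition; it quotes it as a standard fact, deferring to Eisenbud, Chapter 15, and ultimately to Bayer--Stillman. Your route is the standard one there:
\begin{itemize}
\item the revlex property that $x_n\mid \textrm{in}(f)$ forces $x_n\mid f$ for homogeneous $f$ gives $\textrm{in}(I+(x_n))=\textrm{in}(I)+(x_n)$ and $\textrm{in}(I:x_n)=\textrm{in}(I):x_n$;
\item one then iterates inside $S/(x_{i+1},\dots,x_n)\cong \Bbbk[x_1,\dots,x_i]$, where $x_i$ is the last variable;
\item the Hilbert series statement follows from Macaulay's theorem applied to $K$ and $K:x_i$, exactly as you wrote.
\end{itemize}

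Two small points are worth making explicit.

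First, the elements $f\in I$ with $g=f+x_nh$, and $g\in J$ with $\textrm{in}(g)=x_im$, must be chosen homogeneous of degree $\deg g$, respectively $\deg(x_im)$. You can always arrange this by passing to the corresponding homogeneous component, which stays in the homogeneous ideal and keeps the same initial term. This matters because the divisibility property fails for inhomogeneous elements: for $x_n^2+x_1$ the initial term is divisible by $x_n$, but the polynomial is not.

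Second, your lifting step is independent of the monomial order. Both ingredients hold for every monomial order: the identity $\textrm{in}(LS+(x_{i+1},\dots,x_n))=\textrm{in}(L)S+(x_{i+1},\dots,x_n)$ for $L\subseteq\Bbbk[x_1,\dots,x_i]$, and the fact that colon by $x_i$ commutes with the quotient map. So the divisibility property of revlex is the only order-specific input, used once in the variable-killing step and once in the colon step.
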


\noindent There are analogous statements when some of the variables are taken to a power.

\begin{proposition}\label{prop:SS Pow AlmostReg}
    If $I$ is a strongly stable monomial ideal, then $x_n^{d_n},x_{n-1}^{d_{n-1}},\dots,x_1^{d_1}$ where $d_i\geq 1$ for each $i$ is an almost regular sequence for $S/I$.
\end{proposition}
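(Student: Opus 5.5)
To prove Proposition~\ref{prop:SS Pow AlmostReg}, I will check the two conditions in the definition of an almost regular sequence directly, following the argument of Proposition~\ref{prop:SS AlmostReg}.

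\emph{Regular sequence on $S$.} The sequence $x_n^{d_n},\dots,x_1^{d_1}$ is a regular sequence of $S$. The elements are powers of distinct variables, so $S/(x_n^{d_n},\dots,x_{i+1}^{d_{i+1}})$ is a free $\Bbbk[x_1,\dots,x_i]$-module, and $x_i^{d_i}$ is a nonzerodivisor on it.

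\emph{Finite length of the colon modules.} For each $i$ set $M_i=S/(I+(x_{i+1}^{d_{i+1}},\dots,x_n^{d_n}))$. The ideal $I+(x_{i+1}^{d_{i+1}},\dots,x_n^{d_n})$ is a monomial ideal, so the module $(0:_{M_i}x_i^{d_i})$ is spanned by the images of monomials $m\notin I+(x_{i+1}^{d_{i+1}},\dots,x_n^{d_n})$ with $x_i^{d_i}m\in I+(x_{i+1}^{d_{i+1}},\dots,x_n^{d_n})$. It suffices to show that some power of each variable $x_k$ annihilates such an $m$, since a finitely generated module killed by a power of $\mathfrak m$ has finite length. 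I will in fact aim for the cleaner claim that $x_k^{d_i}m$ lies in the ideal for every $k\le i$.

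The key step is as follows. Because $m$ is not in $(x_{i+1}^{d_{i+1}},\dots,x_n^{d_n})$ and $x_i^{d_i}$ does not involve $x_{i+1},\dots,x_n$, multiplying by $x_i^{d_i}$ does not change the exponents of $x_{i+1},\dots,x_n$. Hence $x_i^{d_i}m$ is still not divisible by any $x_t^{d_t}$ with $t>i$, and therefore $x_i^{d_i}m\in I$. Now take $k<i$. Applying the strongly stable property (\ref{eq:StronglyStable}) $d_i$ times, each time replacing one factor $x_i$ by $x_k$, gives $x_k^{d_i}m\in I$. So $x_1^{d_i},\dots,x_i^{d_i}$ all annihilate the image of $m$.

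For $k>i$ I use $x_k^{d_k}m\in(x_k^{d_k})$, so $x_k^{d_k}$ annihilates the image of $m$ as well. Thus the colon module is killed by the $\mathfrak m$-primary ideal $(x_1^{d_i},\dots,x_i^{d_i},x_{i+1}^{d_{i+1}},\dots,x_n^{d_n})$. It is also finitely generated, and so it has finite length.

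The only delicate point is the key step. There I need the monomial structure to guarantee that the witness of $x_i^{d_i}m$ being in the ideal actually comes from $I$ and not from the added powers, and I have justified this by the exponent argument above. Everything else is routine.
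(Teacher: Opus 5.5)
Your proof is correct and is essentially the argument the paper intends: the paper gives no separate proof, presenting the statement as the analogue of Proposition~\ref{prop:SS AlmostReg}, whose proof is that the strongly stable property forces the colon module to be killed by $\mathfrak m$. Your version correctly weakens this to annihilation by the $\mathfrak m$-primary ideal $(x_1^{d_i},\dots,x_i^{d_i},x_{i+1}^{d_{i+1}},\dots,x_n^{d_n})$, which is genuinely needed (e.g.\ for $I=(x_1^2)\subset\Bbbk[x_1,x_2]$ and $d_1=2$ the colon module for $i=1$ is all of $S/(x_1^2,x_2^{d_2})$, which $\mathfrak m$ does not kill). Your exponent argument showing $x_i^{d_i}m\in I$ is exactly what makes the strongly stable property applicable.
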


\begin{proposition}\label{prop:Pow In Hilbert}
    Let $I$ be a homogeneous ideal of $S$ with reverse lexicographic monomial order. Then $\textrm{in}(I+(x_{i+1},\dots,x_n):x_i^{d_i}) = (\textrm{in}(I) + (x_{i+1},\dots,x_n):x_i^{d_i})$ for $d_i \geq 1$, and $\textrm{in}(I+(x_{i+1}^{d_{i+1}},\dots,x_n^{d_n}):x_i^{d_i}) \subset (\textrm{in}(I) + (x_{i+1}^{d_{i+1}},\dots,x_n^{d_n}):x_i^{d_i})$ for $d_i,\dots,d_n\geq 1$. As a consequence, the modules
    \[
    (0:_{S/(I+(x_{i+1},\dots,x_n)} x_i^{d_i}) \textrm{ and } (0:_{S/(\textrm{in}(I)+(x_{i+1},\dots,x_n)} x_i^{d_i})
    \]
    have the same Hilbert series, and the Hilbert series of $(0:_{S/(I+(x_{i+1}^{d_{i+1}},\dots,x_n^{d_n})} x_i^{d_i})$ is term-wise bounded above by the Hilbert series of $(0:_{S/(\textrm{in}(I)+(x_{i+1}^{d_{i+1}},\dots,x_n^{d_n}))} x_i^{d_i})$.
\end{proposition}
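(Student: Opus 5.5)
The plan is to reduce everything to two standard facts about the reverse lexicographic order (Eisenbud, Chapter 15). \emph{Fact A}: for a homogeneous ideal $I$, $\textrm{in}(I+(x_n))=\textrm{in}(I)+(x_n)$. This holds because a homogeneous $f$ with $x_n\mid \textrm{in}(f)$ is divisible by $x_n$, so reducing $f$ modulo $x_n$ commutes with taking initial terms. \emph{Fact B}: for every $d\geq 1$, $\textrm{in}(I:x_n^{d})=\textrm{in}(I):x_n^{d}$. The containment $\subseteq$ is immediate from $\textrm{in}(fx_n^d)=\textrm{in}(f)x_n^d$. For $\supseteq$, take a monomial $m$ with $mx_n^d=\textrm{in}(g)$ for some homogeneous $g\in I$. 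Every term of $g$ has the same degree and is revlex-smaller than $\textrm{in}(g)$, so each term is divisible by $x_n^d$; hence $g=x_n^d f$ with $f\in I:x_n^d$ and $\textrm{in}(f)=m$.

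For the equality part, I set $P=(x_{i+1},\dots,x_n)$. Applying Fact A iteratively to $x_n,x_{n-1},\dots,x_{i+1}$ gives $\textrm{in}(I+P)=\textrm{in}(I)+P$; each step is justified because the image of $I$ in $S/(x_n)\cong \Bbbk[x_1,\dots,x_{n-1}]$ again carries the revlex order. Both ideals in the claimed equality contain $P$, so I pass to $\bar S=S/P\cong\Bbbk[x_1,\dots,x_i]$. There $x_i$ is the last variable, and Fact B applied to $\bar I=(I+P)/P$ gives $\textrm{in}(\bar I:x_i^{d_i})=\textrm{in}(\bar I):x_i^{d_i}=(\overline{\textrm{in}(I)+P}):x_i^{d_i}$. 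Lifting back to $S$ gives $\textrm{in}(I+P:x_i^{d_i})=(\textrm{in}(I)+P):x_i^{d_i}$.

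For the Hilbert series of $(0:_{S/(I+P)}x_i^{d_i})=((I+P):x_i^{d_i})/(I+P)$, I will use that an ideal and its initial ideal share a Hilbert function. This applies both to the pair $I+P$ and $\textrm{in}(I)+P$, and to the colon ideal and the right-hand side above. Subtracting the two Hilbert functions gives equality of the Hilbert series.

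For the powers statement, I set $Q=(x_{i+1}^{d_{i+1}},\dots,x_n^{d_n})$. My plan is to take a homogeneous $f\in (I+Q):x_i^{d_i}$, write $fx_i^{d_i}=g+\sum_k h_kx_k^{d_k}$ with $g\in I$, and reduce $f$ by monomials of $(\textrm{in}(I)+Q):x_i^{d_i}$. The aim is to show that $\textrm{in}(f)x_i^{d_i}$ is divisible by some $x_k^{d_k}$, or equals the initial term of some element of $I$. The key tool is the revlex property that terms not divisible by a power of a small variable dominate those that are. I expect this inclusion to be the main obstacle, since $\textrm{in}(I+Q)$ is in general strictly larger than $\textrm{in}(I)+Q$. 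If a direct argument fails, I would instead deform: replace $Q$ by $(x_{i+1}^{d_{i+1}},\dots)$ in new variables, or use a flat family from $I$ to $\textrm{in}(I)$ combined with upper semicontinuity of $\dim_\Bbbk(0:_{S/(\cdot+Q)}x_i^{d_i})_q$. That fallback yields the termwise Hilbert bound directly, without passing through the ideal inclusion.
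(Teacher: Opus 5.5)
Your argument for the first half is correct, and it is the standard one. Iterating Fact A gives $\textrm{in}(I+P)=\textrm{in}(I)+P$. Fact B applies in $S/P\cong\Bbbk[x_1,\dots,x_i]$ because $x_i$ is the last variable there. Lifting back is legitimate since $\textrm{in}(K)=P+\textrm{in}(K\cap\Bbbk[x_1,\dots,x_i])S$ for every ideal $K\supseteq P$. Subtracting Hilbert functions then gives equality of Hilbert series. The paper states this proposition without proof, so there is nothing to compare against. The genuine gap is the second half. You only sketch a plan there, and no plan can succeed: the inclusion and the termwise bound are false for a general homogeneous $I$. Take $S=\Bbbk[x_1,x_2,x_3]$, $I=(x_2^2-x_1x_3)$, $i=1$ and $d_1=d_2=d_3=2$, so $Q=(x_2^2,x_3^2)$. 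Since $x_2^2>x_1x_3$ in revlex, $\textrm{in}(I)=(x_2^2)$. But $I+Q=(x_1x_3,x_2^2,x_3^2)$, so $(I+Q):x_1^2=(x_3,x_2^2)$, which is a monomial ideal equal to its own initial ideal. Meanwhile $(\textrm{in}(I)+Q):x_1^2=(x_2^2,x_3^2)$, so $x_3$ violates the inclusion. Likewise $(0:_{S/(I+Q)}x_1^2)$ is spanned by the images of $x_3$ and $x_2x_3$ and has Hilbert series $t+t^2$. On the other side, $x_1$ is a nonzerodivisor on $S/(\textrm{in}(I)+Q)$, so that module is $0$.

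This is exactly the obstruction you anticipated: $x_1x_3\in\textrm{in}(I+Q)\setminus(\textrm{in}(I)+Q)$. Since $\textrm{in}((I+Q):x_i^{d_i})\subseteq\textrm{in}(I+Q):x_i^{d_i}$ always holds, this excess propagates into the colon. Your deformation fallback fails on the same example. Along $I_t=(x_2^2-tx_1x_3)$, $\dim_\Bbbk(0:_{S/(I_t+Q)}x_1^2)_1$ equals $1$ for $t\neq 0$ and $0$ at $t=0$. The family $I_t+Q$ is not flat: $\dim_\Bbbk(S/(I_t+Q))_3$ jumps from $2$ to $4$ at $t=0$. So the kernel of multiplication by $x_1^2$ can shrink at the special fiber, and no upper semicontinuity is available. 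What does hold is the inclusion, and hence the Hilbert bound, whenever $\textrm{in}(I+Q)=\textrm{in}(I)+Q$. One such case is $d_{i+2}=\cdots=d_n=1$. There you reduce modulo $x_{i+2},\dots,x_n$ and use $\textrm{in}(I+(x_n^d))=\textrm{in}(I)+(x_n^d)$ for the last variable. That equality follows from your Fact B together with the exact sequence $0\to (S/(I:x_n^d))(-d)\to S/I\to S/(I+(x_n^d))\to 0$ and its analogue for $\textrm{in}(I)$. Beyond such cases the second half needs an additional hypothesis (the paper only invokes it after a generic change of coordinates), and as stated it cannot be proved.
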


By combining Proposition \ref{prop:SS AlmostReg} and Proposition \ref{prop:In Hilbert}, one can deduce the following result, see \cite{aramova2000Almost} for more details using this proof method.

\begin{theorem}[\cite{bayer1999extremal}]\label{thm:GINextremalBettEq}
    The extremal Betti numbers and the extremal Betti positions of $S/I$ and $S/\textrm{GIN}(I)$ are the same.
\end{theorem}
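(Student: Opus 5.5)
The plan is to compare $S/I$ and $S/\textrm{GIN}(I)$ through a single almost regular sequence, the variables in reverse order $x_n,x_{n-1},\dots,x_1$. We then read off both sets of extremal Betti numbers from the Corollary of \cite{aramova2000Almost} stated above. We work in characteristic $0$, as in Theorem \ref{thm:GIN StronglyStable}.

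First we reduce to a generic change of coordinates. By Theorem \ref{thm:GIN Open} there is $u$ in a Zariski open set with $\textrm{in}(uI)=J:=\textrm{GIN}(I)$. A linear change of coordinates is a graded automorphism of $S$, so $S/I$ and $S/uI$ have the same graded Betti numbers. It therefore suffices to compare $M=S/uI$ with $M'=S/J$.

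Second, we show $x_n,\dots,x_1$ is an almost regular sequence for both modules. It is trivially a regular sequence of $S$, and it consists of linear forms. By Theorem \ref{thm:GIN StronglyStable}, $J$ is strongly stable, so Proposition \ref{prop:SS AlmostReg} gives that each module
\[
(0:_{S/(J+(x_{i+1},\dots,x_n))} x_i)
\]
has finite length. Applying Proposition \ref{prop:In Hilbert} to the ideal $uI$, whose initial ideal is $J$, the module
\[
(0:_{S/(uI+(x_{i+1},\dots,x_n))} x_i)
\]
has the same Hilbert series. In particular it is finite-dimensional, hence of finite length. So the sequence is almost regular for $M$ as well.

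Third, we transfer the invariants. The equality of Hilbert series for every $i$ gives the following.
\begin{itemize}
\item The top degrees $s_1,\dots,s_n$ attached to the sequence in Theorem \ref{thm:Ext Betti Gen} coincide for $M$ and $M'$. Hence the jump indices $i_1<\cdots<i_p$ coincide too.
\item The dimensions of the relevant annihilator modules in their top degrees coincide.
\end{itemize}
The Corollary of \cite{aramova2000Almost} expresses the extremal Betti positions $(i,i+j)$ with $i\geq 1$ and their values purely in terms of these data. For linear forms, $H_i(n)=\textrm{Tor}_i(\Bbbk,M)$, so the Koszul homology in Theorem \ref{thm:Ext Betti Gen} computes Betti numbers. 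The positions and values therefore agree for $S/I$ and $S/J$.

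It remains to handle homological degree $0$. For both modules $\beta_{0,j}$ is $1$ for $j=0$ and $0$ otherwise. The position $(0,0)$ is extremal exactly when all higher Betti numbers vanish, i.e. when the ideal is $0$; this happens for $I$ if and only if it happens for $J$. Hence that case agrees as well.

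The main point needing care is the second step: checking that the generic $u$ supplied by Theorem \ref{thm:GIN Open} is exactly what makes Proposition \ref{prop:In Hilbert} apply with $\textrm{in}(uI)=J$. This is immediate once the reverse lexicographic order is fixed. After that, the Corollary does all the remaining work.
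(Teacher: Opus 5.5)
Your proposal is correct and follows exactly the route the paper indicates. You make $x_n,\dots,x_1$ an almost regular sequence for $S/\textrm{GIN}(I)$ via strong stability (Proposition \ref{prop:SS AlmostReg}), transfer it to $S/uI$ through the equality of Hilbert series of the colon modules (Proposition \ref{prop:In Hilbert}), and read off the extremal Betti positions and values from the Aramova--Herzog corollary, which depend only on the $s_i$ and the dimensions of those colon modules; your separate check of homological degree $0$, which that corollary does not cover, is a correct small addition.
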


\subsection{The multi-graded generic initial ideal}\label{sec:multi GIN}

Let $X_1,\dots,X_c$ be sets of variables where $X_j=\{x_{1,j},\dots,x_{n_j,j}\}$. Define the polynomial ring $R=\Bbbk [\bigcup_{j=1}^c X_j]$. We endow $R$ with the $\mathbb{N}^{c}$ multi-grading where the grading of the variable $x_{i,j}$ is the $c$-tuple with a 1 in the $j$'th position and zeros elsewhere.

Throughout, we always assume in any monomial order that 
\begin{align}\label{eq:Ordering}
    & x_{1,j}>\cdots>x_{n_j,j}
\end{align}
for every $1\leq j\leq c$.

Let $I$ be a multi-homogeneous ideal of $R$ with the reverse lexicographic monomial ordering induced by an ordering of variables satisfying (\ref{eq:Ordering}). Let $G= \textrm{GL}_\Bbbk (n_1)\times \cdots \times\textrm{GL}_\Bbbk (n_c)$. The \textit{$G$-generic initial ideal} of $I$ is the monomial ideal $\textrm{in}(uI)$ for a sufficiently generic matrix $u\in G$. The meaning of ``sufficiently generic'' is made precise by the following observation of Babson and Novak \cite{Babson2004Face}.

\begin{theorem}[\cite{Babson2004Face}]\label{thm:color zariski open}
    Let $I\subset R$ be a multi-homogeneous ideal. There is a Zariski open subset $U$ of $G$ and multi-homogeneous ideal $J=\textrm{$G$-GIN}(I)$ such that $\textrm{in}(uI)=J$ for all $u\in U$.
\end{theorem}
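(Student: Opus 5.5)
The plan is to adapt the standard proof of Theorem \ref{thm:GIN Open} (as in \cite[Chapter 15]{eisenbudBook}), working one multidegree at a time and using only that $G$ is an irreducible variety: it is a nonempty Zariski open subset of an affine space. Two observations make the multi-graded setting no harder than the graded one. Since $I$ is multi-homogeneous and $u\in G$ acts block-diagonally, $uI$ is again multi-homogeneous and $(uI)_{\mathbf{a}}=u(I_{\mathbf{a}})$ for every $\mathbf{a}\in\mathbb{N}^c$. Moreover, the initial ideal of a multi-homogeneous ideal is multi-homogeneous, and in each multidegree it is spanned by monomials of that multidegree.

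First, fix a multidegree $\mathbf{a}$ and let $r=\dim_\Bbbk I_{\mathbf{a}}$. Choose a basis $f_1,\dots,f_r$ of $I_{\mathbf{a}}$. For $u\in G$, write the coefficient matrix of $uf_1,\dots,uf_r$ with respect to the monomial basis of $R_{\mathbf{a}}$. Its entries are polynomials in the entries of $u$. By Gaussian elimination, $\textrm{in}(uI)_{\mathbf{a}}$ is spanned by the $r$ monomials indexing the largest $r$-subset of monomials, with respect to the order on $r$-subsets induced by the monomial order, whose $r\times r$ minor is nonzero.

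Let $\mathcal{J}_{\mathbf{a}}$ be the largest $r$-subset whose minor is not identically zero as a polynomial function on $G$. Such a subset exists because $uf_1,\dots,uf_r$ are linearly independent for every $u$. Let $U_{\mathbf{a}}\subset G$ be the locus where this minor is nonzero. Then $U_{\mathbf{a}}$ is Zariski open, and it is nonempty because $\Bbbk$ is infinite and $G$ is irreducible. For every $u\in U_{\mathbf{a}}$, all larger subsets have vanishing minors, so $\textrm{in}(uI)_{\mathbf{a}}=J_{\mathbf{a}}:=\textrm{span}(\mathcal{J}_{\mathbf{a}})$.

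Next, set $J=\bigoplus_{\mathbf{a}} J_{\mathbf{a}}$ and check that $J$ is an ideal. For a variable $x$ of multidegree $\mathbf{e}$, pick $u\in U_{\mathbf{a}}\cap U_{\mathbf{a}+\mathbf{e}}$, which is possible since a finite intersection of nonempty opens in an irreducible space is nonempty. Then $xJ_{\mathbf{a}}=x\,\textrm{in}(uI)_{\mathbf{a}}\subset \textrm{in}(uI)_{\mathbf{a}+\mathbf{e}}=J_{\mathbf{a}+\mathbf{e}}$. So $J$ is a multi-homogeneous monomial ideal, and by Noetherianity it is generated in finitely many multidegrees $\mathbf{a}_1,\dots,\mathbf{a}_p$.

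Finally, put $U=U_{\mathbf{a}_1}\cap\cdots\cap U_{\mathbf{a}_p}$, which is nonempty and open. For $u\in U$, the generators of $J$ lie in $\textrm{in}(uI)$, so $J\subset \textrm{in}(uI)$. To get equality, compare dimensions in each multidegree $\mathbf{b}$: $\dim \textrm{in}(uI)_{\mathbf{b}}=\dim (uI)_{\mathbf{b}}=\dim I_{\mathbf{b}}=|\mathcal{J}_{\mathbf{b}}|=\dim J_{\mathbf{b}}$, and therefore $J=\textrm{in}(uI)$.

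The main obstacle is this last step. The sets $U_{\mathbf{a}}$ are infinitely many, so one cannot simply intersect all of them. The key is to control $\textrm{in}(uI)$ through finitely many degrees, using the ideal property of $J$ together with the equality of Hilbert functions. Everything else is routine linear algebra over the infinite field $\Bbbk$.
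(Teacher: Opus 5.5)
Your proof is correct. It is the standard generic-initial-ideal argument from Eisenbud's Chapter 15, run multidegree by multidegree:
\begin{itemize}
\item non-vanishing minors give a nonempty open set in each multidegree;
\item the ideal property of $J$ comes from intersecting two such open sets, using that $G$ is irreducible;
\item finitely many generating multidegrees, together with equality of Hilbert functions, reduce everything to a single open set $U$.
\end{itemize}
The paper gives no proof of this statement and only cites Babson--Novak, so there is no proof in the paper to compare your route against.
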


A useful fact is that the Zariski open condition in Theorems \ref{thm:GIN Open} and \ref{thm:color zariski open} imply that $J_j=\textrm{GIN}(I_j)$ for all $j$. 

Another necessary observation is that taking the $G$-generic initial ideal yields a \textit{strongly color-stable ideal}, which is an ideal satisfying property (\ref{eq:ColorStable}) below. It is analogous to the aforementioned strongly stable property.

\begin{theorem}[\cite{Babson2004Face}]\label{thm:GIN-ColorStronglyStable}
    Let $I\subset R$ be a multi-homogeneous ideal and assume $\Bbbk$ has characteristic $0$. Then $J=\textrm{$G$-GIN}(I)$ has the following property:
    \begin{equation}\label{eq:ColorStable}
    \textrm{if $mx_{i,j}\in J$ is a monomial, then $mx_{i',j}\in J$ for any $i'<i$.}
    \end{equation}
\end{theorem}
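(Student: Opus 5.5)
The plan is to follow the classical proof that generic initial ideals are Borel-fixed, as in \cite[Theorem 15.20]{eisenbudBook}, and run it blockwise inside $G$. The proof has two parts: first show that $J$ is fixed by the Borel subgroup of $G$, then deduce the exchange property (\ref{eq:ColorStable}) in characteristic $0$.

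\emph{Step 1 (Borel-fixedness).} Let $B=B_1\times\cdots\times B_c\subset G$, where $B_j\subset \textrm{GL}_\Bbbk(n_j)$ is the upper triangular Borel subgroup. It is oriented so that $x_{i,j}\mapsto \sum_{i'\le i} b_{i'i}x_{i',j}$, compatible with $x_{1,j}>\cdots>x_{n_j,j}$. I want to show $bJ=J$ for every $b\in B$, where $J=\textrm{in}(uI)$ for $u$ in the open set $U$ of Theorem \ref{thm:color zariski open}.
\begin{itemize}
\item Work one multidegree $\mathbf{a}\in\mathbb{N}^c$ at a time. Set $V=(uI)_{\mathbf{a}}$, a subspace of the finite-dimensional space $R_{\mathbf{a}}$, and pass to the exterior power $\bigwedge^{\dim V}R_{\mathbf{a}}$.
\item Order wedges of monomials by the order induced from the monomial order. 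Then $\textrm{in}(V)$ is read off from the largest wedge of monomials whose Plücker coordinate is nonzero on $\bigwedge V$.
\item Genericity of $u$ means this coordinate is the largest one that is not identically zero as a polynomial function of $g\in G$ on $\bigwedge(gI)_{\mathbf{a}}$.
\item Since $B$ is a product of the one-block Borels and $G$ is a product, Eisenbud's Bruhat-type argument applies verbatim. Pick $u$ with both $u\in U$ and $bu\in U$; this is possible because $U\cap b^{-1}U$ is a nonempty open set.
\item The upper triangular action of $b$ sends each wedge of monomials to a nonzero multiple of itself plus terms that are larger in the order. Combined with the maximality coming from genericity, this gives $\textrm{in}(b\,uI)_{\mathbf{a}}=b\cdot\textrm{in}(uI)_{\mathbf{a}}$ as spans. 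Since $bu\in U$, the left side is $J_{\mathbf{a}}$, so $bJ=J$.
\end{itemize}

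I expect this bookkeeping to be the main obstacle. One has to check that only the within-block order (\ref{eq:Ordering}) is used, and not the interleaving of variables from different blocks. The reason it should work is that $B$ never mixes blocks, so each variable is only ever replaced by a combination of larger variables from its own block.

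\emph{Step 2 (from Borel-fixed to (\ref{eq:ColorStable})).} Let $mx_{i,j}\in J$ and $i'<i$. Write $mx_{i,j}=x_{i,j}^a m'$ with $a\ge1$ and $x_{i,j}\nmid m'$.
\begin{itemize}
\item Apply the elementary matrix $b\in B_j$ with $x_{i,j}\mapsto x_{i,j}+t\,x_{i',j}$, fixing all other variables. Then $b(mx_{i,j})=\sum_k \binom{a}{k}t^k x_{i',j}^k x_{i,j}^{a-k} b(m')$.
\item Here $b(m')=m'$ because $x_{i,j}\nmid m'$, so the displayed terms are distinct monomials.
\item Since $J$ is a monomial ideal fixed by $b$, every monomial appearing with nonzero coefficient lies in $J$.
\item The term $k=1$ has coefficient $at$, which is nonzero in characteristic $0$ (take $t=1$). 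Hence $x_{i',j}x_{i,j}^{a-1}m'=mx_{i',j}\in J$, which is (\ref{eq:ColorStable}).
\end{itemize}
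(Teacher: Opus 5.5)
Your Step 2 is correct. Your overall route is the standard one: Borel-fixedness for $B=B_1\times\cdots\times B_c\subset G$, then the characteristic-$0$ binomial expansion. The paper gives no proof of its own; it just cites \cite{Babson2004Face}.

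The gap is the decisive inference of Step 1. Let $w_0$ be the wedge of the monomials spanning $J_{\mathbf a}$. You use four facts: $u\in U$, $bu\in U$, $\bigwedge^t b$ is upper triangular, and no wedge above $w_0$ ever has a nonzero Plücker coordinate. These do not give $\textrm{in}(buI)_{\mathbf a}=b\cdot\textrm{in}(uI)_{\mathbf a}$. Write $\bigwedge^t(uV)=p_{w_0}w_0+\sum_{w<w_0}p_w w$. The images $\bigwedge^t b(w)$ of the lower wedges also have components above $w_0$, and these can cancel those of $p_{w_0}\bigwedge^t b(w_0)$. Maximality constrains only the sum, not $\bigwedge^t b(w_0)$ itself.

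A one-block example shows that this data is insufficient. Take $x_1>x_2$, $V'=\Bbbk(x_1x_2-x_2^2)$ and $b\colon x_2\mapsto x_2+x_1$. Then $bV'=\Bbbk(x_1x_2+x_2^2)$, so $\textrm{in}(bV')=\textrm{in}(V')=\Bbbk x_1x_2$. Yet $b(\Bbbk x_1x_2)=\Bbbk(x_1^2+x_1x_2)\neq\Bbbk x_1x_2$.

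The block bookkeeping you single out as the main obstacle is harmless. The property $b(m)\in\Bbbk^{\times}m+(\text{larger monomials})$ only needs each variable to map to itself plus larger variables of its own block, and (\ref{eq:Ordering}) guarantees this.

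The missing idea is a step that isolates $w_0$, and the diagonal torus $T\subset G$ supplies it.
\begin{enumerate}
\item Put $V=I_{\mathbf a}$, $t=\dim V$, and let $M$ be the span of $\{\bigwedge^t(gV):g\in G\}$. It is $G$-stable, and by maximality it lies in the span of the wedges $w\le w_0$.
\item Since $\Bbbk$ is infinite, $M$ is spanned by $T$-weight vectors. The weight of $n_1\wedge\cdots\wedge n_t$ is given by the monomial $n_1\cdots n_t$.
\item For $u\in U$, choose a basis $f_i=m_i+(\text{smaller terms})$ of $uV$ with $m_1\wedge\cdots\wedge m_t=w_0$. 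Every wedge occurring in $f_1\wedge\cdots\wedge f_t$ has the form $\pm n_1\wedge\cdots\wedge n_t$ with $n_i\le m_i$. By multiplicativity of the order, $n_1\cdots n_t\le m_1\cdots m_t$, with equality only for $w_0$.
\item Hence the weight-$(m_1\cdots m_t)$ component of $\bigwedge^t(uV)$ is a nonzero multiple of $w_0$, so $w_0\in M$.
\item Therefore $\bigwedge^t b(w_0)\in M$ has no terms above $w_0$. Upper triangularity then forces $\bigwedge^t b(w_0)\in\Bbbk w_0$, that is, $bJ_{\mathbf a}=J_{\mathbf a}$.
\end{enumerate}
Equivalently, you can degenerate $uV$ along a one-parameter subgroup of $T$ whose weights refine the monomial order. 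With this step inserted, the rest of your argument goes through.
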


\subsection{Colored algebraic shifting}\label{sec:Alg Shifting}

In this section, we define colored algebraic shifting and present some of its useful properties. From here on, we assume that $\Bbbk$ has characteristic $0$. This ensures that the conclusion of Theorem \ref{thm:GIN-ColorStronglyStable} holds, which translates to the shifted property in Theorem \ref{thm:shifted props}. We note that the contents of this section recover the definitions and properties of the usual (uncolored) algebraic shifting when $c=1$. Let $\Delta$ be a simplicial complex whose vertex set is a disjoint union $V_1 \dot{\cup} \cdots \dot{\cup} V_c$. We may define the \textit{flag face numbers} of $\Delta$, denoted $f_{\mathbf{t}}$ for a tuple $\mathbf{t}\in \mathbb{N}^c$, as the number of faces with $\mathbf{t}_j$ vertices in $V_j$ for each $j$.

Let $I_\Delta$ be the Stanley-Reisner ideal of $\Delta$, and let $J=\textrm{$G$-GIN}(I_\Delta)$. In this special case (i.e. taking the $G$-generic initial ideal of a square-free monomial ideal), the minimal generators of $J$ satisfy a crucial property that we now describe.

\begin{definition}\label{def:A def}
    For a monomial $m$ of $R$, let $\textrm{max}_j(m)$ be the largest index $i$ such that $x_{i,j} | m$ (if there is no such $i$, we take $\textrm{max}_j(m)=0$), and let deg$_j(m)$ be the $j$'th entry of the multi-degree of $m$. Let $\mathscr{A}$ be the set of monomials of $R$ such that $\textrm{max}_j(m)+\textrm{deg}_j(m) \leq n_j+1$ for all $j$.
\end{definition}

\begin{theorem}[\cite{Murai2008Betti}]
    Let $I\subset R$ be a square-free monomial ideal. Then the minimal generators of $G$-GIN$(I)$ are contained in $\mathscr{A}$.
\end{theorem}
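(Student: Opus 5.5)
The plan is to adapt the standard argument that, for a squarefree ideal $I\subset S$, every minimal generator $u$ of $\textrm{GIN}(I)$ satisfies $\textrm{max}(u)+\deg(u)\le n+1$, applying it one color at a time. Fix a color $j$ and a minimal generator $m$ of $J=G\textrm{-GIN}(I)$. Write $i=\textrm{max}_j(m)$ and $d=\deg_j(m)$, and assume $i\ge 1$, since otherwise there is nothing to prove. Then $m=x_{i,j}m'$ where $m'\notin J$, and by the strongly color-stable property (Theorem \ref{thm:GIN-ColorStronglyStable}) $m'\notin J+(x_{i+1,j},\dots,x_{n_j,j})$. Hence $\bar m'$ is a nonzero element of
\[
(0:_{R/(J+(x_{i+1,j},\dots,x_{n_j,j}))} x_{i,j}),
\]
and its $X_j$-degree is $d-1$. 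The aim is to show that this colon module vanishes in $X_j$-degrees $\ge n_j-i+1$. It then follows that $d-1\le n_j-i$, which is the inequality $\textrm{max}_j(m)+\deg_j(m)\le n_j+1$.

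\textbf{Step 1: pass back to $I$.} By an analogue of Proposition \ref{prop:In Hilbert}, the colon module for $J$ has the same multigraded Hilbert function as the corresponding colon module for $uI$ with $u\in U$ generic. Undoing $u$, this is the colon module $(0:_{R/(I+(\ell_{i+1},\dots,\ell_{n_j}))}\ell_i)$, where $\ell_i,\dots,\ell_{n_j}$ are generic linear forms in the variables of $X_j$ only. The identity $\textrm{in}(I+(x_{i+1,j},\dots):x_{i,j})=\textrm{in}(I)+(x_{i+1,j},\dots):x_{i,j}$ is the step I expect to be the main obstacle. In Proposition \ref{prop:In Hilbert} the variables killed are the globally smallest ones, whereas here the order interleaves the blocks. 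I would first handle this by splitting into multidegrees: fix the exponents in all other colors. Each multigraded piece of $I$, $R/I$, and so on is then a module over $\Bbbk[X_j]$. Within a fixed multidegree the revlex comparison of the $X_j$-parts should still make $x_{n_j,j}$-divisibility behave as in the standard proof (the key revlex property: $\textrm{in}(f)\in(x_{k,j},\dots,x_{n_j,j})$ iff $f\in(x_{k,j},\dots,x_{n_j,j})$ for $f$ multihomogeneous). I would check this carefully, and if needed use the ordering freedom noted in Section \ref{sec:multi GIN}.

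\textbf{Step 2: local cohomology of $R/I$ as an $R_j$-module.} Regard $M=R/I$ as a graded module over $R_j=\Bbbk[X_j]$, with the other colors as extra grading. Each piece of fixed multidegree in the other colors, with those exponents squarefree, is a direct sum of Stanley--Reisner rings of links or induced subcomplexes in the variables $X_j$; pieces with non-squarefree exponents are isomorphic to squarefree ones. By Hochster's formula for local cohomology, $H^p_{\mathfrak m_j}(M)_a=0$ for all $a>0$ and all $p$, i.e. every $a$-invariant satisfies $a_p(M)\le 0$.

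\textbf{Step 3: induction on generic linear forms.} For a generic linear form $\ell$ with $0:_M\ell$ of finite length, the exact sequences $0\to (0:_M\ell)\to M(-1)\to M\to M/\ell M\to0$ give
\[
a_p(M/\ell M)\le\max\{a_p(M),\,a_{p+1}(M)+1\}.
\]
Iterating over the $n_j-i$ forms $\ell_{n_j},\dots,\ell_{i+1}$ gives $a_p\big(M/(\ell_{i+1},\dots,\ell_{n_j})M\big)\le n_j-i$ for all $p$. The module $(0:\ell_i)$ on this quotient has finite length, so it lies inside $H^0_{\mathfrak m_j}$ of the quotient. Therefore it vanishes in $X_j$-degrees $>n_j-i$, which is exactly what is needed in the first paragraph and finishes the argument.
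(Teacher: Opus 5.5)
The paper does not prove this statement; it is quoted from \cite{Murai2008Betti}, so there is no internal proof to compare with. Your Steps 2 and 3 are sound. Working fibrewise over monomials in the other colours, they reprove the uncoloured fact that $(0:_{R/(I+(\ell_{i+1},\dots,\ell_{n_j}))}\ell_i)$ vanishes in $X_j$-degrees $>n_j-i$ for generic linear forms $\ell_k$ of colour $j$.

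\textbf{The gap is Step 1.} The identity $\textrm{in}(uI+(x_{i+1,j},\dots,x_{n_j,j}):x_{i,j})=J+(x_{i+1,j},\dots,x_{n_j,j}):x_{i,j}$ is the revlex property for the \emph{globally} smallest variables. It holds when $x_{i,j},\dots,x_{n_j,j}$ are the smallest variables of $R$ (e.g.\ $c=1$, or the last block of a block order). It is false in general. Since $x_{n_j,j}$ can be the smallest variable for only one colour, no choice of order rescues it for all colours, and the paper applies the theorem with orders that interleave all colours.

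Here is an explicit failure. Take $c=2$, $n_1=n_2=2$, the order $x_{1,1}>x_{1,2}>x_{2,1}>x_{2,2}$, and $I=(x_{1,1}x_{1,2},\,x_{2,1}x_{2,2})$.
\begin{itemize}
\item In degree $(1,1)$, revlex gives $x_{1,1}x_{1,2}>x_{2,1}x_{1,2}>x_{1,1}x_{2,2}>x_{2,1}x_{2,2}$.
\item Write $uI=(\ell_1\ell_1',\ell_2\ell_2')$ with $\ell_k=\alpha_kx_{1,1}+\beta_kx_{2,1}$ and $\ell_k'=\gamma_kx_{1,2}+\delta_kx_{2,2}$. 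The minor on the two top monomials is $\gamma_1\gamma_2(\alpha_1\beta_2-\alpha_2\beta_1)\neq 0$, so $J_{(1,1)}=\langle x_{1,1}x_{1,2},\,x_{2,1}x_{1,2}\rangle$, while $J_{(0,1)}=0$.
\item Take $j=1$, $i=2$, so nothing is killed. Then $x_{1,2}$ is a nonzero element of $(0:_{R/J}x_{2,1})_{(0,1)}$.
\item On the other side, $(0:_{R/uI}x_{2,1})_{(0,1)}\cong(0:_{R/I}\ell)_{(0,1)}=0$ for generic $\ell=\alpha x_{1,1}+\beta x_{2,1}$. Indeed $\ell(\gamma x_{1,2}+\delta x_{2,2})\equiv\alpha\delta\, x_{1,1}x_{2,2}+\beta\gamma\, x_{2,1}x_{1,2}$ modulo $I$, which vanishes only for $\gamma=\delta=0$.
\end{itemize}
So the two colon modules have different Hilbert functions. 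The $J$-side is strictly larger, which is exactly the direction that blocks transferring vanishing from $I$ to $J$.

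Your proposed repairs do not work.
\begin{itemize}
\item The ``key revlex property'' fails for multihomogeneous $f$. With the same order, $f=x_{1,1}x_{2,2}+x_{2,1}x_{1,2}$ has $\textrm{in}(f)=x_{2,1}x_{1,2}\in(x_{2,1})$, but $f\notin(x_{2,1})$.
\item Splitting by the exponents of the other colours is not available for $uI$. It is only $\mathbb{N}^c$-graded, because $u$ mixes the variables within each colour.
\item The ordering freedom does not help. On every multidegree the order above agrees with the block order in which all of $X_1$ exceeds all of $X_2$, so even block orders break Step 1 for every colour except the last block.
\end{itemize}
In this example the discrepancy sits in $X_1$-degree $0$, so the theorem itself is not contradicted. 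However, nothing in your argument controls the $X_j$-degrees in which such discrepancies occur.

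What is missing is a way to handle the variables of other colours that lie below $x_{i,j}$. If you kill them as well, the revlex identity becomes valid, but two new problems appear. First, $m/x_{i,j}$ may lie in the killed ideal. Second, on the $I$-side you must control $R/I$ modulo generic forms of several colours, which is no longer a sum of Stanley--Reisner rings over $R_j$. Alternatively, you can filter $uI$ by leading monomials in the smaller variables. For the block order with $X_1$ above $X_2$, this shows that each fibre $(J:q)\cap R_1$, with $q$ a monomial of $R_2$, is an ordinary revlex GIN of a squarefree monomial ideal of $R_1$. Some argument of this kind, adapted to arbitrary interleavings, is needed, and the proposal does not supply it.
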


Now we define the \textit{colored square-free operator} $\tilde{\Phi}$ on monomial ideals of $R$ whose minimal generators are contained in $\mathscr{A}$. Let $x_{i_1,j}\cdots x_{i_k,j}$ be a monomial of degree $k$ where $i_1\leq i_2\leq \cdots \leq i_k$ and $i_k+k\leq n_j+1$. Then we define 
\[
\tilde{\Phi}(x_{i_1,j}\cdots x_{i_k,j})=x_{i_1,j}x_{i_2+1,j}\cdots x_{i_k+k-1,j}.
\]
The assumption that $i_k+k\leq n_j+1$ makes this well-defined. In general, given a monomial $m\in \mathscr{A}$, write $m=m_1\cdots m_c$ where $m_j$ consists of the variables in $X_j$ that divide $m$. Then we define 
\[
\tilde{\Phi}(m)=\tilde{\Phi}(m_1)\cdots \tilde{\Phi}(m_c).
\]
Finally, if $I$ is a monomial ideal whose minimal generators are contained in $\mathscr{A}$, then we define $\tilde{\Phi}(I)$ to be the square-free monomial ideal whose minimal generators are $\tilde{\Phi}(m)$ where $m$ is a minimal generator of $I$.
We can now define the \textit{colored algebraic shifting} of a simplicial complex.

\begin{definition}\label{def:algShift}
    Let $\Delta$ be a simplicial complex with vertex set $V=V_1\dot{\cup} \cdots \dot{\cup} V_c$. Then the simplicial complex $\Delta^{cs}$ whose Stanley-Reisner ideal is $\tilde{\Phi}(G\textrm{-GIN}(I_\Delta$)) is the \textit{colored algebraic shifting} of $\Delta$. 
\end{definition}

When $c=1$, Definition \ref{def:algShift} recovers the usual (uncolored) algebraic shifting, $\Delta^s$, of $\Delta$. Definition \ref{def:algShift} differs in notation from Definition 5.5 in \cite{Babson2004Face}. There, any monomial ordering such that $x_{1,j}>\cdots>x_{n_j,j}$ can be used, hence, the original notation includes the monomial ordering $>$. Here, we specify a  particular monomial ordering satisfying (\ref{eq:Ordering}) when needed, so we drop $>$ from the notation.

We now state the relevant useful properties of the colored algebraic shifting of a simplicial complex. We often associate the vertices of a simplicial complex $\Delta$ with the variables of $R$ for notational ease.

\begin{theorem}[\cite{Babson2004Face}]\label{thm:shifted props}

Let $\Delta$ be a simplicial complex whose vertex set is the disjoint union $V_1\dot{\cup} \cdots \dot{\cup} V_c$. Then the colored shifted complex $\Delta^{cs}$ satisfies the following properties:
\begin{enumerate}
    \item $\Delta^{cs}$ is color shifted: if $x_{i,j}$ is a vertex of a face $F$ in $\Delta^{cs}$, then $(F\setminus \{x_{i,j}\}) \cup \{x_{i',j}\}$ is a face of $\Delta^{cs}$ for every $i'>i$.
    \item $\Delta$ and $\Delta^{cs}$ have the same flag $f$-vector; $f_{\mathbf{t}}(\Delta)=f_{\mathbf{t}}(\Delta^{cs})$ for every $\mathbf{t}\in \mathbb{N}^c$.
\end{enumerate}
    
\end{theorem}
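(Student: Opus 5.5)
The plan is to reduce both properties to statements about $J=G\textrm{-GIN}(I_\Delta)$, which is strongly color-stable by Theorem \ref{thm:GIN-ColorStronglyStable} and has its minimal generators in $\mathscr{A}$ by Murai's theorem. Throughout I identify a set $F\subseteq V$ with the squarefree monomial $x_F$. Then $F\in\Delta^{cs}$ exactly when $x_F\notin\tilde{\Phi}(J)$.

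\emph{Property (1).} First I show that $\tilde{\Phi}(J)$ is itself strongly color-stable in the squarefree sense. That is, if $x_{i,j}m$ is a squarefree monomial in $\tilde{\Phi}(J)$ and $i'<i$ with $x_{i',j}\nmid m$, then $x_{i',j}m\in\tilde{\Phi}(J)$. It suffices to check this on a minimal generator $\tilde{\Phi}(g)$ dividing $x_{i,j}m$. Restricted to the block $X_j$, the map $\tilde{\Phi}$ is the usual shift $i_s\mapsto i_s+s-1$. Lowering one index of $\tilde{\Phi}(g_j)$ while keeping the indices distinct corresponds to lowering an index of $g_j$, possibly after re-sorting the indices. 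By (\ref{eq:ColorStable}) the monomial so obtained from $g$ lies in $J$. If it has left $\mathscr{A}$, I replace it by a minimal generator of $J$ dividing it. Its image under $\tilde{\Phi}$ divides the desired monomial, by a monotonicity check on the shift.

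Taking complements gives property (1). If $F\in\Delta^{cs}$, $x_{i,j}\in F$, $i'>i$ and $x_{i',j}\notin F$, then $(F\setminus\{x_{i,j}\})\cup\{x_{i',j}\}$ is a face. Otherwise, applying the stability just proved with the index $i'$ lowered to $i$ would force $F$ to be a non-face.

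\emph{Property (2).} I argue through multigraded Hilbert series in three steps.
\begin{itemize}
\item Since $u\in G$ is block diagonal, $uI_\Delta$ is multi-homogeneous with the same multigraded Hilbert function as $I_\Delta$. Passing to the initial ideal preserves the Hilbert function in each multidegree, because standard monomials of a given multidegree form a basis there. Hence $R/I_\Delta$ and $R/J$ have the same $\mathbb{N}^c$-graded Hilbert series.
\item The Stanley–Reisner decomposition gives
\[
H(R/I_\Delta;\mathbf{y})=\sum_{\mathbf{t}} f_{\mathbf{t}}(\Delta)\prod_j\Big(\frac{y_j}{1-y_j}\Big)^{\mathbf{t}_j},
\]
so the flag $f$-vector is determined by this series.
\item It therefore remains to show that $R/J$ and $R/\tilde{\Phi}(J)$ have the same multigraded Hilbert series.
\end{itemize}

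For the third step I would build a colored analogue of the standard Stanley decomposition for stable ideals. Every monomial $w\notin J$ should factor uniquely, block by block, as $w=m\cdot w'$. Here $m\in\mathscr{A}\setminus J$, and $w'$ is a monomial in the variables $x_{i,j}$ with $i\ge \max_j(m)$ (the usual Eliahou–Kervaire style splitting). In each block, $\deg_j(m)$ is the number of such free variables. This would give
\[
H(R/J;\mathbf{y})=\sum_{m\in\mathscr{A}\setminus J}\ \prod_j\Big(\frac{y_j}{1-y_j}\Big)^{\deg_j(m)}.
\]
Combined with the claim that for $m\in\mathscr{A}$ we have $m\in J$ if and only if $\tilde{\Phi}(m)\in\tilde{\Phi}(J)$, the bijection $\tilde{\Phi}$ between $\mathscr{A}$ and squarefree monomials would give
\[
\#\{m\in\mathscr{A}\setminus J \text{ of multidegree }\mathbf{t}\}=f_{\mathbf{t}}(\Delta^{cs}),
\]
and comparing with the formula above finishes the proof.

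The main obstacle is making this decomposition rigorous. One must check that the splitting $w=m\cdot w'$ is well defined and unique in each block independently, and that membership in $J$ can be tested blockwise. This works because color-stability acts on each block separately while the other blocks are fixed. The equivalence $m\in J\iff\tilde{\Phi}(m)\in\tilde{\Phi}(J)$ needs the same Borel-type monotonicity of $\tilde{\Phi}$ used for property (1). I expect to handle both by reducing to the uncolored case, one block at a time.
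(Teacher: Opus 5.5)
The paper does not prove this statement. It is quoted from Babson--Novik, so your proposal has to stand on its own.

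\textbf{Part (2): the gap is in step (iii).} Steps (i) and (ii) are correct, and so is the reduction to $H(R/J;\mathbf{y})=H(R/\tilde{\Phi}(J);\mathbf{y})$. But step (iii) carries the whole content of (2), and the decomposition you propose for it is false as stated.

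\begin{itemize}
\item If $w'$ may be any monomial in the $x_{i,j}$ with $i\ge\max_j(m)$, then block $j$ has $n_j-\max_j(m)+1$ free variables (all $n_j$ if $\max_j(m)=0$), not $\deg_j(m)$.
\item The splitting is also not unique. For $c=1$, $n=2$, $J=0$ one has $\mathscr{A}\setminus J=\{1,x_1,x_2,x_1^2\}$. Then $x_1^2=1\cdot x_1^2=x_1\cdot x_1=x_1^2\cdot 1$ are all admissible, and the summand for $m=1$ alone is already the whole series $1/(1-y)^2$.
\end{itemize}

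So $H(R/J;\mathbf{y})=\sum_{m\in\mathscr{A}\setminus J}\prod_j(y_j/(1-y_j))^{\deg_j(m)}$ does not follow from what you wrote. Your remark that membership in $J$ can be tested blockwise also needs to be made precise, since $J$ is not a product of block ideals.

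\textbf{How to repair it.} The formula is true, but the free variables must be the last $\deg_j(m)$ variables of each block.
\begin{itemize}
\item For a monomial $w_j$ in block $j$, let $D(p)$ be its degree in $x_{1,j},\dots,x_{p,j}$. Since $D(n_j-k)-k$ is strictly decreasing in $k$, there is a unique $k$ with $D(n_j-k)\le k\le D(n_j-k+1)$.
\item This gives a unique splitting $w_j=m_jw_j'$ with $\deg m_j=k$, $m_j\in\Bbbk[x_{1,j},\dots,x_{n_j-k+1,j}]$ and $w_j'\in\Bbbk[x_{n_j-k+1,j},\dots,x_{n_j,j}]$.
\item Doing this in every block yields $\textrm{Mon}(R)=\bigsqcup_{m\in\mathscr{A}} m\cdot\textrm{Mon}(\Bbbk[x_{i,j}\mid i\ge n_j-\deg_j(m)+1])$.
\end{itemize}

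You then need the lemma $w\in J\iff m\in J$, which does not appear in your sketch. This is exactly where the hypothesis that the minimal generators of $J$ lie in $\mathscr{A}$ is used. Suppose a minimal generator $g$ divides $w$.
\begin{itemize}
\item The condition $\max_j(g)+\deg_j(g)\le n_j+1$ forces $\deg_j(g)\le k_j$. Otherwise $g_j$ would divide the part of $w_j$ in $x_{1,j},\dots,x_{n_j-k_j,j}$, which has degree at most $k_j$.
\item The factors of $g_j$ lying in $w_j'$ have index at least $n_j-k_j+1$. Every variable of $m_j$ has index at most $n_j-k_j+1$, and $m_j$ has enough unused factors to receive them.
\item So blockwise Borel moves, allowed by strong color-stability, carry $g$ to a divisor of $m$.
\end{itemize}

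\textbf{A smaller point in (1).} The Borel-moved monomial never leaves $\mathscr{A}$, because componentwise lowering does not raise $\max_j$. The real issue is that it need not be a minimal generator. An arbitrary minimal generator $h$ dividing it will not do: take $J=(x_1,x_2)\subset\Bbbk[x_1,x_2,x_3]$, $w=x_1x_2$, $h=x_2$; then $\tilde{\Phi}(h)=x_2\nmid x_1x_3=\tilde{\Phi}(w)$. You must choose $h$ whose block-$j$ part is an initial segment of the sorted $w_j$ for every $j$. Such an $h$ exists: take a minimal tuple of initial-segment lengths whose product lies in $J$.
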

Additionally, the following result shows that the graded Betti numbers of a strongly color-stable monomial ideal $I$ whose minimal generators are contained in $\mathscr{A}$ coincide with the graded Betti numbers of $\tilde{\Phi}(I)$. The original uncolored version of this result is proven in \cite{aramova2000shifting}.

In particular, when computing the graded Betti numbers of $R/I_{\Delta^{cs}}$, one can work with $R/G\textrm{-GIN}(I_\Delta)$ instead.

\begin{theorem}[\cite{Murai2008Betti}]\label{thm:Murai shifted betti}
    Let $I\subset R$ be a strongly color-stable monomial ideal whose minimal generators are contained in $\mathscr{A}$. Then $\beta_{i,j}(I)=\beta_{i,j}(\tilde{\Phi}(I))$ for all $i$ and $j$.
\end{theorem}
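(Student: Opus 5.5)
The plan is to compute the graded Betti numbers of both $I$ and $\tilde{\Phi}(I)$ through linear quotients, in the style of the Eliahou--Kervaire formula and the uncolored argument of \cite{aramova2000shifting}, and then match the two formulas generator by generator. Recall the general fact: if the minimal generators $u_1,\dots,u_m$ of a monomial ideal can be ordered so that each colon ideal $(u_1,\dots,u_{r-1}):u_r$ is generated by a set $\textrm{set}(u_r)$ of variables, then the iterated mapping cones give a minimal resolution, and
\[
\beta_{i,i+k}(I)=\sum_{\deg u_r=k}\binom{|\textrm{set}(u_r)|}{i}.
\]
So it suffices to show that both ideals have linear quotients, and that the sizes of the quotient sets correspond under $\tilde{\Phi}$.

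\emph{Step 1 (the ideal $I$).} Order the minimal generators by total degree, and within a fixed degree by the reverse lexicographic order, with larger monomials first. Let $u$ be a minimal generator. For every $j$ with $\deg_j(u)>0$ and every $i'<\max_j(u)$, property (\ref{eq:ColorStable}) gives
\[
x_{i',j}\,u/x_{\max_j(u),j}\in I,
\]
and this monomial is divisible by an earlier generator. Hence $x_{i',j}\in (\textrm{earlier}):u$. For the converse, take a monomial $w$ in the colon ideal that is not divisible by any of these variables. Then $wu$ is divisible by an earlier generator $v$. Using the strongly color-stable property together with the choice of order, one shows that $v$ must divide $u$, which contradicts minimality. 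This is the standard Eliahou--Kervaire argument, carried out separately in each block. The outcome is
\[
|\textrm{set}(u)|=\sum_{j:\ \deg_j(u)>0}\bigl(\max_j(u)-1\bigr).
\]

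\emph{Step 2 (the ideal $\tilde{\Phi}(I)$).} I first check that $\tilde{\Phi}(I)$ is squarefree strongly color-stable: replacing $x_{k,j}$ in $\tilde{\Phi}(u)$ by $x_{k',j}$, with $k'<k$ and $x_{k',j}\nmid \tilde{\Phi}(u)$, gives $\tilde{\Phi}$ of a monomial obtained from $u$ by lowering one index. Next I check that $\tilde{\Phi}$ is a bijection between the minimal generators of $I$ and those of $\tilde{\Phi}(I)$, i.e.\ that no $\tilde{\Phi}(v)$ divides another $\tilde{\Phi}(u)$. This uses the hypothesis that the generators lie in $\mathscr{A}$, and the fact that $\tilde{\Phi}$ preserves multidegree and the componentwise order on sorted index sequences. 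Ordering the generators $\tilde{\Phi}(u)$ as in Step 1, the same argument in its squarefree version gives linear quotients, with quotient set
\[
\textrm{set}(\tilde{\Phi}(u))=\{x_{i',j}: i'<\max_j(\tilde{\Phi}(u)),\ x_{i',j}\nmid\tilde{\Phi}(u)\}.
\]
Since $\max_j(\tilde{\Phi}(u))=\max_j(u)+\deg_j(u)-1$ and $\deg_j(u)-1$ of the smaller indices already lie in the support, the $j$-th block contributes exactly $\max_j(u)-1$ variables.

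\emph{Step 3 (comparison).} Degrees are preserved by $\tilde{\Phi}$, and $|\textrm{set}(u)|=|\textrm{set}(\tilde{\Phi}(u))|$ for every minimal generator $u$. The formula above therefore gives $\beta_{i,j}(I)=\beta_{i,j}(\tilde{\Phi}(I))$ for all $i$ and $j$.

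The main obstacle is the converse inclusion in Step 2, together with the claim that $\tilde{\Phi}$ respects minimality. One has to show that a squarefree monomial in the colon ideal avoiding the listed variables cannot occur, which involves tracking the index shifts $i_s\mapsto i_s+s-1$. I would handle this by working block by block: in each block the operator is the classical Kalai square-free operator, and the uncolored argument of \cite{aramova2000shifting} applies verbatim in each block, because the blocks interact only through the product structure.
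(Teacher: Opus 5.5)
The paper gives no proof of this statement; it is quoted from \cite{Murai2008Betti}. Your argument therefore has to stand on its own, and it does not. The linear-quotient claim in Step 1 is false once $c\ge 2$, and the same failure occurs in Step 2.

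\emph{First counterexample.} Take $c=2$ and $I=(x_{1,1},x_{1,2})$. This ideal is strongly color-stable, its generators lie in $\mathscr{A}$, and $\tilde{\Phi}(I)=I$. The colon ideal $(x_{1,1}):x_{1,2}=(x_{1,1})$ is generated by a variable of the \emph{other} block. Your formula predicts $|\textrm{set}(u)|=\max_j(u)-1=0$ for both generators, hence $\beta_{1,2}(I)=0$, whereas in fact $\beta_{1,2}(I)=1$.

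\emph{Second counterexample.} Take $n_1=n_2=2$ and $I=(x_{1,1}^2,x_{1,2}^2)$. It is strongly color-stable, and its generators lie in $\mathscr{A}$ because $1+2\le n_j+1$. It is a complete intersection, so $\beta_{1,4}(I)=1$. In either ordering the colon ideal is $(x_{1,1}^2)$ or $(x_{1,2}^2)$, so $I$ has no linear quotients at all. The same holds for $\tilde{\Phi}(I)=(x_{1,1}x_{2,1},x_{1,2}x_{2,2})$. The theorem is true here, but no formula of the form $\sum_u\binom{|\textrm{set}(u)|}{i}$ can compute these Betti numbers.

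The precise failing step is your claim that ``one shows that $v$ must divide $u$''. Take $u=x_{1,2}$, $w=x_{1,1}$, $v=x_{1,1}$: then $w$ avoids all the listed variables and $v\mid wu$, yet $v\nmid u$. The one-block Eliahou--Kervaire argument rests on the unique decomposition $w=u(w)\cdot w'$ with $\max(u(w))\le\min(w')$, and that decomposition uses exchanges between arbitrary pairs of variables. Strong color-stability only allows exchanges inside a block, so no such decomposition exists across blocks; for example, $x_{1,1}x_{1,2}$ factors both as $x_{1,1}\cdot x_{1,2}$ and as $x_{1,2}\cdot x_{1,1}$.

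For the same reason your closing remark that the blocks ``interact only through the product structure'' is wrong. A strongly color-stable ideal is not a product or tensor product of one-block strongly stable ideals. It can have nonlinear cross-block syzygies, such as the Koszul relation between $x_{1,1}^2$ and $x_{1,2}^2$. So the uncolored argument of \cite{aramova2000shifting} cannot be applied verbatim in each block, and sharper index bookkeeping will not repair the approach.

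What is missing is a way to compare the Betti numbers of $I$ and $\tilde{\Phi}(I)$ that does not depend on an explicit linear-quotient resolution. One possible route is to realize $R/I$ and $R/\tilde{\Phi}(I)$ as specializations of a common squarefree, polarization-type ideal by regular sequences of linear forms, with regularity of those sequences checked block by block using strong color-stability. This is where the real content of Murai's theorem lies.
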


\section{Proofs of the main results}
\subsection{Proof of Theorem \ref{thm:Alg Main}}

Recall that $R=\Bbbk [\bigcup_{j=1}^c X_j]$ is a multi-graded polynomial ring, and for a multi-homogeneous ideal $I\subset R$ and an index $1\leq j\leq c$, we set $I_j=I\cap \Bbbk[X_j]$.

We first prove a couple of supporting lemmas. Lemma \ref{lem:Koszul Build} shows how to build a cycle in the Koszul complex for $R/I$ given cycles in the Koszul complexes for the $R_j/I_j$'s. Let $\ell_1^{(j)},\dots,\ell_{n_j}^{(j)}$ be a regular sequence for $R_j$ for all $1\leq j\leq c$. Define $W_j=\{(1,j),\dots,(n_j,j) \}$ and $W=\bigcup_{j=1}^c W_j$. We denote a standard basis element in the Koszul complex $K_\bullet (\ell_1^{(j)},\dots,\ell_{n_j}^{(j)};R_j/I_j)$ by $\mathbf{e}_A$ where $A\subset W_j$. Similarly, we denote a standard basis element in the Koszul complex $K_\bullet (\ell_1^{(1)},\dots,\ell_{n_1}^{(1)},\dots,\ell_1^{(c)},\dots,\ell_{n_c}^{(c)};R/I)$ by $\mathbf{e}_A$ where $A\subset W$. Note also that for elements $\alpha_t\in R_{j_t}/I_{j_t}$, $1\leq t\leq k$, there is a well-defined multiplication $\alpha_1\cdots \alpha_k \in R/I$. 

\begin{lemma}\label{lem:Koszul Build}
    Let $z_p=\sum_t \alpha_t^{(p)}\mathbf{e}_{A_t^{p}}$ be a cycle in the Koszul complex $K_\bullet (\ell_1^{(j_p)},\dots,\ell_{n_{j_p}}^{(j_p)};R_{j_p}/I_{j_p})$ for $1\leq p\leq k$. Then the element
    \[
    z=\sum_{(t_1,\dots,t_k)}  \alpha_{t_1}^{(1)}\cdots \alpha_{t_k}^{(k)} \mathbf{e}_{A_{t_1}^{1}\cup \cdots \cup A_{t_k}^{k}}
    \]
    is a cycle in the Koszul complex $K_\bullet (\ell_1^{(1)},\dots,\ell_{n_1}^{(1)},\dots,\ell_1^{(c)},\dots,\ell_{n_c}^{(c)};R/I)$.
\end{lemma}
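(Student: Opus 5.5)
The plan is to identify $K_\bullet(\ell^{(1)}_1,\dots,\ell^{(c)}_{n_c};R/I)$ with a tensor product of the smaller Koszul complexes, and then check the cycle condition by a graded Leibniz computation. I order $W$ lexicographically by block, so that within $W$ all indices of $W_{j}$ precede those of $W_{j'}$ whenever $j<j'$. I take the standard Koszul differential
\[
\partial(\mathbf{e}_A)=\sum_{a\in A}(-1)^{\sigma_A(a)}\ell_a\mathbf{e}_{A\setminus\{a\}},
\]
where $\sigma_A(a)$ is the number of elements of $A$ preceding $a$. I implicitly assume the blocks $j_1<\cdots<j_k$ are distinct, as they are in Theorem \ref{thm:Alg Main}; this is what makes $A^{1}_{t_1}\cup\cdots\cup A^{k}_{t_k}$ a disjoint union with the $A^p$ occurring consecutively in this order.

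First I reduce to the case where each $z_p$ is homogeneous in homological degree. Since $\partial$ lowers homological degree by exactly one, each homological component of a cycle is again a cycle. Moreover $z$ is multilinear in $(z_1,\dots,z_k)$, so it is the sum of the elements built from these components. Hence I may assume $|A^p_t|=h_p$ for all $t$.

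Next I record that multiplication of coefficients makes sense. Since $I_{j_p}=I\cap R_{j_p}\subset I$, there are ring maps $R_{j_p}/I_{j_p}\to R/I$, and the product $\alpha^{(1)}_{t_1}\cdots\alpha^{(k)}_{t_k}$ is their product in $R/I$. In particular, any relation holding in $R_{j_p}/I_{j_p}$ remains valid after mapping into $R/I$.

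Now I compute $\partial z$. For $a\in A^p_{t_p}$, inside $A=A^1_{t_1}\cup\cdots\cup A^k_{t_k}$ we have
\[
\sigma_A(a)=h_1+\cdots+h_{p-1}+\sigma_{A^p_{t_p}}(a).
\]
Grouping the terms of $\partial z$ according to which block $p$ the removed index belongs to therefore gives
\[
\partial z=\sum_{p=1}^k(-1)^{h_1+\cdots+h_{p-1}}\sum_{(t_1,\dots,t_k)}\Big(\prod_{q\neq p}\alpha^{(q)}_{t_q}\Big)\Big[\sum_{a\in A^p_{t_p}}(-1)^{\sigma(a)}\alpha^{(p)}_{t_p}\ell^{(j_p)}_a\,\mathbf{e}_{(\cdots)\cup (A^p_{t_p}\setminus a)\cup(\cdots)}\Big].
\]
Fix $p$ and the indices $t_q$ for $q\neq p$. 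The inner sum over $t_p$ and $a$ is exactly the image of $\partial z_p$ under two operations: the map $R_{j_p}/I_{j_p}\to R/I$ on coefficients, and the map $\mathbf{e}_B\mapsto \mathbf{e}_{A^1_{t_1}\cup\cdots\cup B\cup\cdots\cup A^k_{t_k}}$ on basis elements. The second map is injective on basis elements because the blocks $W_{j_q}$ are disjoint. Thus the coefficient of each basis element of $\partial z$ is a sum of terms of the form (fixed element of $R/I$) $\times$ (image of a coefficient of $\partial z_p$). Since $\partial z_p=0$ in $K_\bullet(\ell^{(j_p)};R_{j_p}/I_{j_p})$, each such coefficient vanishes. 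Hence $\partial z=0$.

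The main point of care is the sign bookkeeping: one must check that the sign from the global ordering on $W$ factors as $(-1)^{h_1+\cdots+h_{p-1}}$ times the local sign. This is exactly where the block ordering of $W$ and the homological homogeneity reduction are used. Everything else is the formal identity $K(\ell;R)\otimes K(\ell';R')\cong K(\ell,\ell';R\otimes R')$, pushed forward along the map to $R/I$.
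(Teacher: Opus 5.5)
Your proof is correct and is the same argument as the paper's. You expand $\partial z$ by the graded Leibniz rule as a signed sum over $p$ of terms built from $\partial z_p$ (pushed into $R/I$ and multiplied by the remaining factors), and each of these terms vanishes. Your block ordering of $W$, the reduction to homologically homogeneous $z_p$, and the use of distinct blocks $j_1<\cdots<j_k$ make explicit the sign bookkeeping that the paper compresses into its $\pm$.
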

\begin{proof}
    The differential of $z$ is given by
    \[
    \partial(z) = \pm\partial(z_1)\left(\sum_{(t_2,\dots,t_k)}  \alpha_{t_2}^{(2)}\cdots \alpha_{t_k}^{(k)} \mathbf{e}_{A_{t_2}^{2}\cup \cdots \cup A_{t_k}^{k}}\right) \pm \cdots \pm \partial(z_k)\left(\sum_{(t_1,\dots,t_{k-1})}  \alpha_{t_1}^{(1)}\cdots \alpha_{t_{k-1}}^{(k-1)} \mathbf{e}_{A_{t_1}^{1}\cup \cdots \cup A_{t_{k-1}}^{k-1}}\right).
    \]
    Since $\partial(z_1)=\cdots =\partial(z_k)=0$, we have that $\partial(z)=0$ and hence $z$ is a cycle.
\end{proof}

The following lemma is a consequence of the reverse lexicographic monomial order. 

\begin{lemma}\label{lem:ordering lem}
    Let $f \in R$ be multi-homogeneous, and suppose that we take the reverse lexicographic monomial order where the variables $\{x_{1,j},\dots,x_{i_j,j} \mid 1\leq j \leq c\}$ are greater than any variable in $\{x_{i_j+1,j},\dots,x_{n_j,j} \mid 1\leq j \leq c\}$. If $\textrm{in}(f)\in \Bbbk[x_{1,j},\dots,x_{i_j,j} \mid 1\leq j \leq c]$, then for every $h\in (x_{i_j+1,j},\dots,x_{n_j,j} \mid 1\leq j \leq c)$ with the same multi-degree as $f$, $\textrm{in}(f+h)=\textrm{in}(f)$.
\end{lemma}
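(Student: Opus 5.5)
We prove Lemma \ref{lem:ordering lem} directly from the definition of the reverse lexicographic order. Write $Y=\{x_{i_j+1,j},\dots,x_{n_j,j}\mid 1\leq j\leq c\}$ for the set of ``small'' variables and $X'$ for the remaining ones. By hypothesis, every variable of $X'$ is greater than every variable of $Y$. The statement is about initial terms, so it suffices to show that every monomial appearing in $h$ is strictly smaller than $\textrm{in}(f)$. Then $\textrm{in}(f)$ is not cancelled in $f+h$: its coefficient is unchanged, since $\textrm{in}(f)$ itself does not occur in $h$, and every other monomial of $f+h$ is either a monomial of $f$ below $\textrm{in}(f)$ or a monomial of $h$, which lies below $\textrm{in}(f)$ by this claim.

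The first step is a reduction. Since $h$ lies in the monomial ideal generated by $Y$ and is multi-homogeneous of the same multi-degree as $f$, we may take $h$ to have this multi-degree, so that every monomial $m$ in the support of $h$ is divisible by some variable of $Y$. In particular $m$ has the same total degree as $\textrm{in}(f)$, so the degree comparison in the revlex order gives no information and the comparison falls to the reverse lexicographic tie-break.

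The key step is then the following revlex fact. Let $m$ and $m'$ have the same degree, with $m'\in\Bbbk[X']$ and $m$ divisible by some variable of $Y$; we claim $m<m'$. Revlex compares two monomials by looking at the smallest variable $y$ in which their exponents differ, and the monomial with the larger exponent of $y$ is the smaller one. Take $y$ to be the smallest variable dividing $m$ overall. Then $y\in Y$, because $m$ is divisible by some variable of $Y$ and every variable of $Y$ is below every variable of $X'$. Every variable smaller than $y$ has exponent $0$ in $m$, and also exponent $0$ in $m'$, since such variables lie in $Y$ and $m'$ involves only $X'$. The exponent of $y$ itself is positive in $m$ and $0$ in $m'$. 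So $y$ is exactly the smallest variable where the exponents differ, and $m$ has the larger exponent there. Hence $m<m'$.

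Applying this fact with $m'=\textrm{in}(f)$ and $m$ any monomial of $h$ gives the claim, and hence $\textrm{in}(f+h)=\textrm{in}(f)$. The only point needing care is the reduction to the homogeneous component of $h$ of the right multi-degree (equal total degree), which is what makes the revlex tie-break apply. It is also worth noting why the ordering assumption on the variables is essential. Without it, a variable of $Y$ could be larger than some variable of $X'$. The smallest variable dividing $m$ could then belong to $X'$, and the argument would break down.
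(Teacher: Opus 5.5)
Your proof is correct and takes the same approach as the paper, which only remarks that the lemma follows directly from the definition of the reverse lexicographic order. You have written out that argument in full: each monomial of $h$ has the same total degree as $\textrm{in}(f)$ and is divisible by a small variable, so the last variable in which it differs from $\textrm{in}(f)$ is small and has positive exponent in it, which makes it smaller than $\textrm{in}(f)$ (your ``reduction'' step is unnecessary, since $h$ is assumed to have the multi-degree of $f$).
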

\begin{proof}
    This follows directly from the definition of the reverse lexicographic ordering.
\end{proof}

\AlgMain*
\begin{proof}
    First note, it follows from the Zariski open condition in Theorems \ref{thm:GIN Open} and \ref{thm:color zariski open}, that $J_j=\textrm{GIN}(I_j)$ for each $j$. After replacing $I$ with $uI$ for $G$-generic $u$, we may assume simply that $J=\textrm{in}(I)$ and $J_j=\textrm{in}(I_j)$. 

    We assume that $j_1=1,\dots,j_k = k$ for simplicity. We also set $d_{i_j+1,j}=\cdots =d_{n_j,j}=1$ for all $1\leq j\leq k$. By Proposition \ref{prop:SS Pow AlmostReg}, $x_{n_{j},j}^{d_{n_{j},j}},\dots,x_{1,j}^{d_{1,j}}$ is an almost regular sequence of $R_{j}/J_{j}$ and hence of $R_{j}/I_{j}$ as well by Proposition \ref{prop:Pow In Hilbert}. For the rest of the proof, we set $\ell_1^{(j)} = x_{n_{j},j}^{d_{n_{j},j}},\dots,\ell_{n_{j}}^{(j)} = x_{1,j}^{d_{1,j}}$. By Proposition \ref{prop:Pow In Hilbert}, there exists an element $f_j \in \left(I_{j} + (\ell_1^{(j)},\dots,\ell_{n_{j}-i_j}^{(j)}):_{R_{j}} \ell_{n_{j}-i_j+1}^{(j)}\right)_{q_j}$ such that $\textrm{in}(f_j)=a_j$ for all $1\leq j\leq k$. Also, the fact that $a_1\cdots a_k\notin J+\left(\ell_1^{(j)},\dots,\ell_{n_{j}-i_j}^{(j)} \mid 1\leq j\leq k\right)$ implies that $a_j \notin J_{j}+\left(\ell_1^{(j)},\dots,\ell_{n_{j}-i_j}^{(j)} \right)$ for all $j$. Therefore, $f_j \notin I_{j}+\left(\ell_1^{(j)},\dots,\ell_{n_{j}-i_j}^{(j)} \right)$ as well. Proposition \ref{prop:Kosz Hom rep} can then be applied with $M=R_{j}/I_{j}$: there exists a nonzero element of $H_{i_j}(\ell_1^{(j)},\dots,\ell_{n_{j}}^{(j)};R_{j}/I_{j})$ represented by an element of the form $(f_j\mathbf{e}_{A_1^j}+\cdots)\in K_{i_j}\left(\ell_1^{(j)},\dots,\ell_{n_{j}}^{(j)};R_{j}/I_{j}\right)$ where $A_1^j=\{(n_{j}-i_j+1,j),\dots,(n_{j},j) \}$. For now, write this element as $\sum_t \alpha_t^{(j)}\mathbf{e}_{A_t^j}$. By Lemma \ref{lem:Koszul Build}, we have the cycle 
    \[
    z=\sum_{(t_1,\dots,t_k)}  \alpha_{t_1}^{(1)}\cdots \alpha_{t_k}^{(k)} \mathbf{e}_{A_{t_1}^{1}\cup \cdots \cup A_{t_k}^{k}} \in K_{\sum_ji_j}\left(\ell_1^{(1)},\dots,\ell_{n_{1}}^{(1)},\dots,\ell_1^{(k)},\dots,\ell_{n_{k}}^{(k)}; R/I \right).
    \]
    To see that it is a nontrivial cycle, note that $f_1\cdots f_k \mathbf{e}_{A_1^1\cup \cdots \cup A_1^k}$ is a summand of $z$, and observe that $$\textrm{in}(f_1\cdots f_k) = a_1\cdots a_k.$$ 
    Therefore, if $z$ is a boundary, then $f_1\cdots f_k\in I+\left(\ell_1^{(j)},\dots,\ell_{n_{j}-i_j}^{(j)} \mid 1\leq j\leq k\right)$. In other words, there exists some $h\in \left(x_{i_j+1,j},\dots,x_{n_{j},j} \mid 1\leq j\leq k\right)$ such that $f_1\cdots f_k+h\in I$. However, by Lemma \ref{lem:ordering lem}, $\textrm{in}(f_1\cdots f_k+h)=a_1\cdots a_k$, and hence $a_1\cdots a_k \in \textrm{in}(I)=J$, a contradiction. Thus, $z$ represents a nontrivial cycle.

    Observe that the (total) degree of $z$ is $q_1+\cdots+q_k+\sum_{j=1}^k \sum_{p=1}^{i_j} d_{p,j}$ and that the multi-grading of $z$ is a $c$-tuple that is only nonzero in positions $1,\dots,k$. The top (total) degree of $R/(x_{1,j}^{d_{1,j}},\dots,x_{n_{j},j}^{d_{n_{j},j}} \mid 1\leq j\leq k)$ in such multi-gradings is $\sum_{j=1}^k \sum_{p=1}^{n_{j}} (d_{p,j}-1)$. Now, by a similar argument as in Proposition \ref{prop:Reg Lower} and its proof, the regularity of $R/I$ must be at least 
    \[
    q_1+\cdots+q_k+\sum_{j=1}^k \sum_{p=1}^{i_j} d_{p,j} -\sum_{j=1}^ki_j - \sum_{j=1}^k \sum_{p=1}^{n_{j}} (d_{p,j}-1) = q_1+\cdots+q_k-\sum_{j=1}^k\sum_{p=i_j+1}^{n_{j}} (d_{p,j}-1)=q_1+\cdots+q_k. 
    \]
\end{proof}

\subsection{Proof of Theorem \ref{thm:comb main}}\label{sec:Comb main sec}

For a $c$-tuple $\mathbf{t}\in \mathbb{N}^c$, let $X_{\mathbf{t},j}=\{x_{n_j-r_j+\min\{\mathbf{t}_j,d+1\},j},\dots,x_{n_j,j}\}$. For a subset $F$, define 
\[
g_{\mathbf{t},j}(F)=\begin{cases}
|F\cap (X_j\setminus{X_{\mathbf{t},j}})| & \textrm{if } |F\cap (X_j\setminus{X_{\mathbf{t},j}})|\geq \min\{\mathbf{t}_j,d+1\},\\
 0 &\textrm {otherwise.}
\end{cases}
\]

As a supporting lemma, we first prove the uncolored version of Theorem \ref{thm:comb main} using algebraic shifting. In fact, Theorem \ref{thm:d-leray-upbound} is a direct consequence of Lemma \ref{lem: 1- color comb main}.

\begin{lemma}\label{lem: 1- color comb main}
    Let $\Delta$ be a $d$-Leray simplicial complex with $n$ vertices, and let $\Delta^s$ be its algebraic shifting. If the largest face of $\Delta$ has size $r$, then every face of $\Delta^s$ contains at most $d$ vertices outside of $\{n-(r-d)+1,\dots,n\}$.
\end{lemma}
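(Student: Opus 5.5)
We work with the uncolored case $c=1$, with the usual reverse lexicographic order $x_1>\cdots>x_n$. Let $J=\textrm{GIN}(I_\Delta)$. The largest face of $\Delta$ has size $r$, so every monomial of degree $r+1$ in $S/I_\Delta$ vanishes modulo the squares. In the generic setting the relevant fact is that $\Delta^s$ also has largest face of size $r$, since algebraic shifting preserves face numbers. Color shiftedness then makes $\{n-r+1,\dots,n\}$ a face of $\Delta^s$, while no face of $\Delta^s$ meets $\{1,\dots,n-r\}$, as that would give a face of size $r+1$ after shifting right.

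The face-number argument is not enough to control faces with $d+1$ vertices outside $\{n-r+d+1,\dots,n\}$. For that we use regularity. Since $\Delta$ is $d$-Leray, $S/I_\Delta$ has regularity at most $d$, so by Theorem \ref{thm:BayerStillman} we get $\textrm{reg}(S/J)\le d$. By Theorem \ref{thm:Murai shifted betti}, also $\textrm{reg}(S/I_{\Delta^s})\le d$, so $\Delta^s$ is $d$-Leray by Hochster's formula (Theorem \ref{thm:HochstersFormula}).

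Suppose for contradiction that $F\in\Delta^s$ has $d+1$ vertices $v_1<\cdots<v_{d+1}$ in $\{1,\dots,n-r+d\}$. Since $\Delta^s$ is shifted, we may push these vertices to the right to obtain the face $F'=\{n-r+1,\dots,n-r+d+1\}$. Together with the remaining vertices this needs care, so we choose the pushed positions in a controlled way, as follows.

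\textbf{Main step.} We show that $F'\cup\{n-r+d+2,\dots,n\}$, and variants of it, lead to the contradiction. Take $W=\{v_1,\dots,v_{d+1}\}\cup\{n-r+d+1,\dots,n\}$, which has $r+1$ elements, and consider $\Delta^s[W]$. Every proper subset of $W$ is a face. Indeed, removing some $v_i$ leaves $r$ vertices, and this set is obtained from $\{n-r+1,\dots,n\}$ by shifting left. So we need the opposite direction of shiftedness; we handle it as follows.

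We use the shifted property in the other direction: each $v_i$ lies left of all the $w$'s. The set $W\setminus\{w\}$ for $w\in\{n-r+d+1,\dots,n\}$ contains $F$, so its faces are obtained from $F$. To make this work we choose $F$ maximal and argue instead through the link. Set $L=\mathrm{lk}_{\Delta^s}$ of an appropriate set and apply the standard shifted-complex fact: if $\sigma\in\Delta^s$ and $n\notin\sigma$ with $\sigma\cup\{n\}\notin\Delta^s$, then $\sigma$ contributes to $\tilde H_{|\sigma|-1}$ of the induced subcomplex on $[\max\sigma]\cup\{n\}$.

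Using $d$-Leray-ness, every face $\sigma$ of size at least $d+1$ satisfies $\sigma\cup\{m\}\in\Delta^s$ on the relevant induced subcomplexes, where $m$ is the largest vertex outside $\sigma$ in the window. This is the near-cone/shifted homology criterion: in a shifted complex restricted to $[m]$, $\tilde H_{k}$ has a basis indexed by faces $\sigma$ with $|\sigma|=k+1$, $m\notin\sigma$, and $\sigma\cup\{m\}\notin$. We iterate this for $k\ge d$, starting from $F$, and add the vertices $n,n-1,\dots$ one at a time. Each addition is allowed because the vertex exceeds $\max F$ or $F$ can be shifted below it. This grows $F$ to $F\cup\{n-r+d+1,\dots,n\}$, a face of size $r+1$, which is a contradiction.

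We expect the delicate point to be the ordering argument: ensuring that at each step the added vertex is the largest vertex in the induced window. We would handle this by restricting to $W_m=[m]$ and adding $m$ in decreasing order. Because every current face has at least $d+1$ vertices below $n-r+d+1$, the homology-vanishing criterion applies at each step.
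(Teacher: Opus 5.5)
Your ingredients are the right ones: $\Delta^s$ is $d$-Leray by Bayer--Stillman, Murai's theorem and Hochster's formula, and you grow a face one vertex at a time using shiftedness plus vanishing of $\tilde H_k$ for $k\ge d$. But the argument as written has genuine errors.

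First, two intermediate claims are false. The claim that no face of $\Delta^s$ meets $\{1,\dots,n-r\}$ fails as soon as $r<n$. Since $f_0(\Delta^s)=f_0(\Delta)=n$, the singleton $\{1\}$ is a face, and shifting right preserves cardinality, so it never produces an $(r+1)$-face. (If that claim were true, the lemma would hold with $0$ in place of $d$.) Pushing the $d+1$ low vertices to $F'=\{n-r+1,\dots,n-r+d+1\}$ overshoots. Indeed $F'\cup\{n-r+d+2,\dots,n\}=\{n-r+1,\dots,n\}$ has only $r$ elements, so no contradiction can come from it. The correct target is $\{n-r,\dots,n-r+d\}$, the top $d+1$ positions of $[n-r+d]$. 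The claim that every proper subset of $W$ is a face is, as you noticed yourself, unjustified.

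The decisive problem is the growth step. The criterion you cite says that in the shifted complex $\Delta^s[[m]]$, a face $\sigma$ with $m\notin\sigma$, $|\sigma|\ge d+1$ and $\sigma\cup\{m\}\notin\Delta^s$ gives a nonzero class in $\tilde H_{|\sigma|-1}$. It only applies to $\sigma\subseteq[m-1]$. If you adjoin $n$ first and then $n-1,n-2,\dots$, the current face already contains larger vertices and does not lie in $[m]$. ``Shifting $F$ below'' $m$ would move vertices to smaller indices, which shiftedness does not permit. Your reformulation with ``$m$ the largest vertex outside $\sigma$ in the window'' is false in general. The complex on $\{1,2,3\}$ with facets $\{1,3\}$ and $\{2,3\}$ is shifted and $1$-Leray, yet $\{2,3\}\cup\{1\}$ is not a face. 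Starting from all of $F$ has the same defect, since $F$ may already contain vertices of $\{n-r+d+1,\dots,n\}$.

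The repair is as follows:
\begin{itemize}
\item Start from $\sigma_0=\{v_1,\dots,v_{d+1}\}$ and adjoin $m=n-r+d+1,\dots,n$ in \emph{increasing} order, so the current face $\sigma$ always lies in $[m-1]$.
\item Each $(\sigma\setminus\{v\})\cup\{m\}$ is a face by shifting. So if $\sigma\cup\{m\}\notin\Delta^s$, the induced complex on $\sigma\cup\{m\}$ is the boundary of a simplex, with $\tilde H_{|\sigma|-1}\neq 0$ and $|\sigma|-1\ge d$. This contradicts $d$-Leray.
\item After $r-d$ steps you reach a face of size $r+1$, which is impossible because $\Delta^s$ has the same face numbers as $\Delta$.
\end{itemize}
No near-cone homology basis is needed. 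This is essentially the paper's argument. The paper pushes to the consecutive block $\{n-r,\dots,n-r+d\}$, then repeatedly notes that all maximal proper subsets of the next consecutive block are faces by shifting, so the $d$-Leray property fills the block in, until $\{n-r,\dots,n\}$ is reached.
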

\begin{proof}
    First, we observe that $\Delta^s$ is $d$-Leray. This follows from Theorem \ref{thm:BayerStillman} together with \ref{thm:Murai shifted betti}.
    
    Assume some face of $\Delta^s$ has $d+1$ vertices outside of $\{n-(r-d)+1,\dots,n\}$. Then, by the shifted property of $\Delta^s$, $\{n-r,\dots,n-(r-d) \}$ is a face of $\Delta^s$. Again by the shifted property, every $(d+1)$-set of $\{n-r,\dots,n-(r-d)+1 \}$ is a face of $\Delta^s$. The $d$-Leray property then implies that $\{n-r,\dots,n-(r-d)+1 \}$ is a face of $\Delta^s$. Continuing in this manner, we get that $\{n-r,\dots,n \}$ is a face of $\Delta^s$. This is a contradiction since the largest face of $\Delta^s$ has size  $r$.
    \end{proof}

\CombMain*
\begin{proof}
    We first observe that it follows from the Zariski open condition in Theorems \ref{thm:GIN Open} and \ref{thm:color zariski open} that $\Delta^{cs}[V_j]=(\Delta[V_j])^s$ for all $j$.
    
    We assume for simplicity that $j=1,\dots,k$ is the set of indices such that $g_{\mathbf{t},j}(F) \neq 0$, and we assume for contradiction that $F$ is a face of $\Delta^{cs}$. By Lemma \ref{lem: 1- color comb main} and the fact that $\Delta^{cs}[V_j]=(\Delta[V_j])^s$, we must have that $x_{n_j-r_j,j}^{d+1}\in J=G\textrm{-GIN}(I_\Delta)$  and hence $\mathbf{t}_j\leq d$ for all $1\leq j\leq k$.  Since $\Delta^{cs}$ is color shifted, $\Delta^{cs}$ has the face $\{x_{n_1-r_1,1},\dots,x_{n_1-r_1+\mathbf{t}_1-1,1} \}\cup \cdots \cup \{ x_{n_k-r_k,k},\dots,x_{n_k-r_k+\mathbf{t}_k-1,k} \}$. In other words, $x_{n_1-r_1,1}^{\mathbf{t}_1}\cdots x_{n_k-r_k,k}^{\mathbf{t}_k}$ is not contained in $J$. 

    We will apply Theorem \ref{thm:Alg Main} to complete the proof. We take $d_{n_j,j}=\cdots = d_{n_j-r_j+1,j}=1$ and $d_{n_j-r_j,j}=\cdots =d_{1,j}=(d+1)-\mathbf{t}_j$ for $1\leq j\leq k$.
    We have that 
    \begin{align*}
    &x^{\mathbf{t}_j}_{n_j-r_j,j} \in (J_j+(x_{n_j,j},\dots,x_{n_j-r_j+1,j}):_{R_j} x^{(d+1)-\mathbf{t}_j}_{n_j-r_j,j}),\\
    &x_{n_1-r_1,1}^{\mathbf{t}_1}\cdots x_{n_k-r_k,k}^{\mathbf{t}_k}\notin J + (x_{n_j,j},\dots,x_{n_j-r_j+1,j}\mid 1\leq j\leq k), \textrm{ and}\\
    & \mathbf{t}_j + ((d+1) - \mathbf{t}_j) = d+1 > \textrm{reg}(R_j/(I_\Delta)_j).
    \end{align*}
Theorem \ref{thm:Alg Main} then implies $\textrm{reg}(R/I_\Delta) \geq \sum_{j=1}^k \mathbf{t}_j = g_{\mathbf{t},j}(F) > d$. This is a contradiction and completes the proof.
\end{proof}

A direct application of Theorem \ref{thm:comb main} gives a proof of Conjecture \ref{conj:Upper Main} and hence of Conjecture \ref{conj:col frac helly}.

\ConjecturesHold*
\begin{proof}
    By Theorem \ref{thm:comb main}, no face in ${V \choose \mathbf{1}}$ of $\Delta^{cs}$ is contained in 
    \[
    \{x_{1,1},\dots,x_{n_1-r_1,1} \}\cup \cdots \cup \{x_{1,d+1},\dots,x_{n_{d+1}-r_{d+1},d+1}\}.
    \] 
    Therefore, the number of faces in ${V \choose \mathbf{1}}$ of $\Delta^{cs}$ is at most $n_1\cdots n_{d+1} - (n_1-r_1)\cdots (n_{d+1}-r_{d+1})$.
\end{proof}
\noindent Finally, we show via Proposition \ref{prop:Upper Bound Tight} below that the more general upper bound in Corollary \ref{cor:gen Upper Bound} is tight. For this, we briefly recall the notion of $d$-collapsibility, see \cite{martin2013intersection} for more information. Notably, a $d$-collapsible simplicial complex is $d$-Leray.

A face $\sigma$ of a simplicial complex $\Delta$ such that  $\sigma$ is contained in a unique maximal face of $\Delta$ and $|\sigma| \leq d$ is called $d$-collapsible. The simplicial complex $\Delta' = \Delta \setminus \{\tau\in \Delta \mid \tau \supseteq \sigma \}$ is an elementary $d$-collapse of $\Delta$. Finally, we say that $\Delta$ is $d$-collapsible if there is a sequence of $d$-collapses that reduce $\Delta$ to the empty complex.

\begin{proposition}\label{prop:Upper Bound Tight}
    Let $V=V_1\dot{\cup} \cdots \dot{\cup} V_c$, and let $\mathbf{n}=(n_1,\dots,n_c)$, $\mathbf{r}=(r_1,\dots,r_c)$ be tuples such that $r_i\leq n_i$ for all $i$. Then the simplicial complex
    \[
    \Delta = \left\{F\subseteq V \mid F \textrm{ contains no set in } \mathscr{F}_{\mathbf{k}}(\mathbf{n},d,\mathbf{r}) \textrm{ for any }\mathbf{k}\in \mathbb{N}^c  \right\}
    \]
    is a $d$-collapsible simplicial complex.
\end{proposition}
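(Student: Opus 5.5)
The plan is to induct on the number of vertices using the standard gluing criterion for $d$-collapsibility. If $v$ is a vertex of a complex $K$, $\mathrm{lk}_K(v)$ is $(d-1)$-collapsible and $K\setminus v$ (the deletion) is $d$-collapsible, then $K$ is $d$-collapsible. The reason is that each elementary $(d-1)$-collapse of the link at a face $\sigma$ lifts to an elementary $d$-collapse of $K$ at $\sigma\cup\{v\}$. This works because the maximal faces of $K$ containing $\sigma\cup\{v\}$ are exactly the joins of $v$ with the maximal faces of the link containing $\sigma$, and $|\sigma\cup\{v\}|\le d$. Once the link is exhausted, every face containing $v$ has been removed and we are left with $K\setminus v$. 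I would first record this lemma, with proof, since it is not stated earlier in the paper. I would also record the base cases: the empty complex, and the case $d=0$, where the hypothesis forces $\Delta$ to be a simplex. For $d=0$: a face $F\ni x_{i,j}$ with $i\le n_j-r_j$ and $\mathbf{k}=\mathbf{1}_j$ gives $g_{\mathbf{k},j}(\{x_{i,j}\})=1>0$, so the early vertices $x_{1,j},\dots,x_{n_j-r_j,j}$ are not vertices of $\Delta$ at all. I expect every remaining set to be a face, so $\Delta$ is a simplex.

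For the inductive step I would choose the vertex $v=x_{n_j,j}$ for some $j$ with $r_j\ge 1$. This vertex is the ``most free'' one: it lies in every $X_{\mathbf{k},j}$ as long as $\min\{\mathbf{k}_j,d+1\}\le r_j$. If every $r_j=0$, then $X_{\mathbf{k},j}$ contains only the indices $\ge n_j+\min\{\mathbf{k}_j,d+1\}$, so $g_{\mathbf{k},j}(F)=|F\cap V_j|$ whenever this is positive. Hence $\Delta$ is the $(d-1)$-skeleton of the full simplex on $V$, which is $d$-collapsible: collapse the top faces one at a time at themselves, since each has size $\le d$.

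The deletion $\Delta\setminus v$ should be exactly the complex of the same form with parameters $n_j-1$ and $r_j-1$. The thresholds $n_j-r_j+\min\{\mathbf{k}_j,d+1\}$ are unchanged when $n_j$ and $r_j$ both drop by one, and no $F\subseteq V\setminus\{v\}$ is affected by the index $n_j$. Induction then gives that the deletion is $d$-collapsible.

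The main obstacle is the link. I would aim to show that $\mathrm{lk}_\Delta(v)$ is contained in, and in fact equals, a complex of the same type on $V\setminus\{v\}$ with $d$ replaced by $d-1$, again with $n_j$ and $r_j$ both decreased by one. The heuristic is as follows. Adding $v$ to a set $F$ raises $\mathbf{k}_j$ by one and so pushes the threshold of $X_{\mathbf{k},j}$ one index later. A set that was ``restricted'' in color $j$ at level $\mathbf{k}_j$ therefore becomes restricted at level $\mathbf{k}_j+1$ in a window that is one larger, and this should be exactly the cost of one unit in the budget $d$. The delicate points will be two boundary effects. One is when $\mathbf{k}_j\ge d+1$, where the $\min\{\cdot,d+1\}$ truncation kicks in. The other is the all-or-nothing condition in the definition of $g_{\mathbf{t},j}$, namely $|F\cap(X_j\setminus X_{\mathbf{t},j})|\ge\min\{\mathbf{t}_j,d+1\}$. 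I would verify the identification of the link by checking minimal non-faces color by color. If exact equality fails, I would fall back on the weaker statement that the link is itself obtained from such a $(d-1)$-complex by collapses, or I would change the vertex choice so that the identification is clean.

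If the link computation resists, the alternative plan is a direct collapsing order. List the maximal faces $M$ of $\Delta$ in reverse lexicographic order of their ``restricted parts'' $M\cap\bigl(\bigcup_j (X_j\setminus X_{\mathbf{k},j})\bigr)$. The defining condition bounds the total size of each restricted part by $d$. I would then collapse each maximal face at its restricted part, checking that this part is contained in a unique surviving maximal face. This uniqueness is exactly the color-shifted structure of $\Delta$, which is inherited from the shifted sets defining $\mathscr{F}_{\mathbf{k}}$.
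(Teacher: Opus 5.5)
\textbf{There is a genuine gap: the link step fails for your chosen vertex.} For $v=x_{n_j,j}$ the link is neither equal to nor contained in the complex with parameters $(d-1,\,n_j-1,\,r_j-1)$. In general it is not $(d-1)$-collapsible at all, so the gluing lemma cannot be applied at this vertex.

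A small case already shows this. Take $c=1$, $n_1=3$, $r_1=2$, $d=1$. The only minimal non-face is $\{x_{1,1},x_{2,1}\}$, so $\Delta$ is the path with facets $\{x_{1,1},x_{3,1}\}$ and $\{x_{2,1},x_{3,1}\}$. The link of $x_{3,1}$ is two isolated points, which is not $0$-collapsible. Your predicted complex (parameters $2,1,0$) is just the single vertex $x_{2,1}$.

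The underlying reason is that adding the least constrained vertex $x_{n_j,j}$ to a face $F$ costs nothing unless $|F\cap V_j|\ge r_j$. In particular, if $r_j>d$ then $x_{n_j,j}$ is a cone point of $\Delta$, so its link is the whole deletion. When $n_j>r_j$, the deletion contains as an induced subcomplex the $(d-1)$-skeleton of the simplex on $\{x_{1,j},\dots,x_{n_j-r_j+d,j}\}$, so it is not even $(d-1)$-Leray. Neither of your fallbacks (exact equality, or collapsing the link onto a $(d-1)$-complex of the same type) can therefore work for this vertex.

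\textbf{The alternative plan also rests on a false claim.} Take $c=2$, $\mathbf{n}=(3,3)$, $\mathbf{r}=(2,2)$, $d=1$. Then $M=\{x_{2,1},x_{3,1},x_{2,2},x_{3,2}\}$ is a maximal face with $\mathbf{k}=(2,2)$, but $M\cap\bigcup_j(X_j\setminus X_{\mathbf{k},j})=\{x_{2,1},x_{2,2}\}$ has size $2>d$. The defining condition gives no bound here, because $g_{\mathbf{k},j}$ vanishes below its threshold. The uniqueness of the surviving maximal face is also only asserted, not proved.

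\textbf{How the paper argues.} It does not induct. It collapses in rounds $s=d,d-1,\dots,1$. In round $s$ it removes every face $F\in\binom{V}{\mathbf{t}}$ with $|F|=s$ and $\sum_j g_{\mathbf{t},j}(F)=s$, using its unique maximal face $F\cup\bigcup_j X_{\mathbf{t}+\mathbf{1},j}$; two such faces of the same round lie in no common face. What remains at the end is the simplex $\bigcup_j X_{\mathbf{1},j}$.

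\textbf{How to rescue your induction.} Use the opposite vertex. It helps to note the reformulation: $F\in\Delta$ if and only if $\sum_j w_j(F)\le d$, where $w_j(F)$ is the number of $x_{i,j}\in F$ such that fewer than $n_j-r_j$ of $x_{1,j},\dots,x_{i-1,j}$ lie outside $F$. Now take $v=x_{1,j}$ for some $j$ with $n_j>r_j$, and shift the indices in $V_j$ down by one.
\begin{itemize}
\item The link of $v$ is exactly the complex you wanted: budget $d-1$, with $n_j$ and $r_j$ both lowered by one (keep $r_j=0$ if it was $0$).
\item The deletion of $v$ is the complex with the same $d$ and $r_j$, and $n_j$ lowered by one.
\item The base case $n_j=r_j$ for all $j$ is a full simplex; together with your $d=0$ case, the gluing lemma then closes the induction.
\end{itemize}
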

\begin{proof}
    Let $F\in {V \choose \mathbf{t}}$ be a face of size $d$ such that $\sum g_{\mathbf{t},j}(F) = d$. Then it is easy to see that $F \cup (\bigcup_{j=1}^c X_{\mathbf{t}+\mathbf{1},j})$ is the unique maximal face containing $F$.  Thus, we may perform the elementary $d$-collapse at the face $F$. We may do so for all such faces satisfying $\sum g_{\mathbf{t},j}(F) = d$; this is because if $F\in {V \choose \mathbf{t}}$ and $F'\in {V \choose \mathbf{t}'}$ are distinct faces of size $d$ such that $\sum g_{\mathbf{t},j}(F) = g_{\mathbf{t}',j}(F') = d$, then there is no face that contains both of them. After these collapses, we consider faces $F\in {V \choose \mathbf{t}}$ of size $d-1$ such that $\sum g_{\mathbf{t},j}(F) = d-1$. Similarly, the unique maximal face containing $F$ is $F \cup (\bigcup_{j=1}^c X_{\mathbf{t}+\mathbf{1},j})$ and we collapse all such faces. Continuing in this way, we eventually collapse down to the simplex $\bigcup_{j=1}^c X_{\mathbf{1},j}$, which is of course $d$-collapsible. Therefore, $\Delta$ is $d$-collapsible, and this completes the proof.
\end{proof}

\subsection{Proof of Theorem \ref{thm:Betti lower bound}}\label{sec:ME-betti}

We first prove Theorem \ref{thm:Koszul containment} below from which Theorem \ref{thm:Betti lower bound} can be derived. The motivation comes from the operation in Lemma \ref{lem:Koszul Build} together with Proposition \ref{prop:Kosz Hom iso}.

\begin{theorem}\label{thm:Koszul containment}
    Let $1\leq j_1 <\cdots < j_k\leq c$ be integers, and take the reverse lexicographic monomial order where the variables $\{x_{1,{j_t}},\dots,x_{i_{j_t},{j_t}} \mid 1\leq t \leq k\}$ are greater than any variable in $\{x_{i_{j_t}+1,{j_t}},\dots,x_{n_{j_t},{j_t}} \mid 1\leq t \leq k\}$.
    Let $I\subset R$ be a multi-homogeneous ideal and $J=G\textrm{-GIN}(I)$. Suppose $$({i_{j_1},i_{j_1}+s_{j_1}}),\dots,({i_{j_k},i_{j_k}+s_{j_k}})$$ are extremal Betti positions of $R_{j_1}/J_{j_1},\dots,R_{j_k}/J_{j_k}$, respectively. Consider the vector space map
    \[
    \Psi:(0:_{M_{j_1}\langle n_{j_1}-i_{j_1} \rangle}x_{i_{j_1},j_1})_{s_{j_1}}\otimes \cdots \otimes (0:_{M_{j_k}\langle n_{j_k}-i_{j_k} \rangle }x_{i_{j_k},j_k})_{s_{j_k}} \longrightarrow (0:_{M} (x_{i_{j_1},j_1},\dots,x_{i_{j_k},j_k}))_{s_{j_1}+\cdots +s_{j_k}}
    \]
    given by $g_1 \otimes \cdots \otimes g_k \mapsto g_1\cdots g_k$, where $M_{j_t}\langle n_{j_t}-i_{j_t} \rangle=R_{j_t}/(J_{j_t} + (x_{n_{j_t},j_t},\dots,x_{i_{j_t}+1,j_t}))$ for $1\leq t\leq k$ and $M=R/(J+(x_{n_{j_t},j_t},\dots,x_{i_{j_t}+1,j_t}\mid 1\leq t\leq k))$. Then the subspace of 
    $$
    H_{\sum_t i_{j_t}}(x_{n_1,1},\dots,x_{1,1},\dots,x_{n_c,c},\dots,x_{1,c};R/I)
    $$
    with multi-grading $\mathbf{t}$, where $\mathbf{t}_p=i_{j_t}+s_{j_t}$ if $p=j_t$ and  $\mathbf{t}_p=0$ otherwise, is at least dim$_\Bbbk\textrm{Im}(\Psi)$-dimensional.
\end{theorem}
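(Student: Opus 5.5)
We will imitate the proof of Theorem \ref{thm:Alg Main}, but now build many independent cycles at once. As there, replace $I$ by $uI$ for $G$-generic $u$, so that $J=\textrm{in}(I)$ and $J_j=\textrm{in}(I_j)=\textrm{GIN}(I_j)$, and relabel so that $j_t=t$. For each block $j$ use the sequence $\ell^{(j)}_1=x_{n_j,j},\dots,\ell^{(j)}_{n_j}=x_{1,j}$. This is an almost regular sequence of linear forms for $R_j/J_j$ by Proposition \ref{prop:SS AlmostReg}, and also for $R_j/I_j$ by Proposition \ref{prop:In Hilbert}. Since $(i_j,i_j+s_j)$ is an extremal Betti position of $R_j/J_j$, the corollary on extremal Betti numbers identifies $s_j$ as a jump index. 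Concretely, the index $p=n_j-i_j+1$ satisfies $s_p>\max\{s_1,\dots,s_{p-1}\}$, and the same holds for $R_j/I_j$ because the Hilbert series of the relevant colon modules agree (Theorem \ref{thm:GINextremalBettEq}).

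Proposition \ref{prop:Kosz Hom iso} therefore applies to $M=R_j/I_j$. It gives an isomorphism from $(0:_{(R_j/I_j)\langle n_j-i_j\rangle}x_{i_j,j})_{s_j}$ onto $H_{i_j}(n_j)_{i_j+s_j}$, sending $\bar f$ to a cycle of the form $f\mathbf e_{A^j}+\cdots$ with $A^j=\{(n_j-i_j+1,j),\dots,(n_j,j)\}$. By Proposition \ref{prop:In Hilbert}, $\textrm{in}$ identifies this colon module for $I_j$ with the corresponding one for $J_j$, i.e. $(0:_{M_j\langle n_j-i_j\rangle}x_{i_j,j})_{s_j}$. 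So we fix a basis of the latter consisting of standard monomials $a$. Each $a$ lifts to some $f$ with $\textrm{in}(f)=a$, and we may take $f$ to be in normal form modulo $I_j+(x_{i_j+1,j},\dots)$, so that distinct monomials give linearly independent classes. For tuples of such lifts $f_1,\dots,f_k$, Lemma \ref{lem:Koszul Build} produces cycles $z(f_1,\dots,f_k)$ in the full Koszul complex on all variables with coefficients in $R/I$. The variables of blocks $j>k$ are appended trivially: the Koszul complex on all variables is a tensor product, and we place these cycles in the factor $\mathbf e_\emptyset$ for the remaining blocks. Each such cycle has leading summand $f_1\cdots f_k\,\mathbf e_{A^1\cup\cdots\cup A^k}$, sits in homological degree $\sum i_j$, and has multidegree $\mathbf t$.

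The main step, and the expected obstacle, is linear independence modulo boundaries. Take a basis $\{m_\lambda\}$ of $\textrm{Im}(\Psi)$, where each $m_\lambda$ is a product monomial $a_1\cdots a_k$ not in $J+(x_{i_j+1,j},\dots,x_{n_j,j}\mid j\le k)$; distinct such monomials are independent there. Choose the corresponding cycles $z_\lambda$. Suppose some combination $\sum c_\lambda z_\lambda$ is a boundary. Reading off the coefficient of $\mathbf e_{A^1\cup\cdots\cup A^k}$, exactly as in the proof of Theorem \ref{thm:Alg Main}, we get $\sum c_\lambda f^\lambda_1\cdots f^\lambda_k+h\in I$ for some $h$ in the ideal generated by the smaller variables $x_{i_j+1,j},\dots,x_{n_j,j}$. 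By Lemma \ref{lem:ordering lem}, together with the fact that initial terms of products are products of initial terms, the initial term of $\sum c_\lambda f^\lambda_1\cdots f^\lambda_k$ is one of the $m_\lambda$. If the leading monomials collide, we first perform a triangular change of basis. That initial term is then preserved after adding $h$, so it lies in $\textrm{in}(I)=J$, contradicting the choice of the $m_\lambda$.

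The delicate point is arranging the basis so that initial terms of combinations are again monomials not in $J+(\ldots)$, which is handled by taking the $m_\lambda$ as a monomial basis of the image. The cycles then give $\dim_\Bbbk\textrm{Im}(\Psi)$ independent homology classes in multidegree $\mathbf t$, as claimed.
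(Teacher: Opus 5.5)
Your proposal is correct and follows essentially the same route as the paper. Both pass to $uI$ so that $J=\textrm{in}(I)$, lift a monomial basis $a_1\cdots a_k$ of $\textrm{Im}(\Psi)$ blockwise via Proposition \ref{prop:In Hilbert}, build the block cycles with Proposition \ref{prop:Kosz Hom iso}, glue them with Lemma \ref{lem:Koszul Build}, and get linear independence from the $\mathbf{e}_{A}$-coefficient together with Lemma \ref{lem:ordering lem}. Your extra checks (that $p=n_j-i_j+1$ is a jump index for $R_j/I_j$, and the handling of the remaining blocks) are details the paper leaves implicit; the triangular change of basis is unnecessary, since distinct basis monomials already give the products $f_1\cdots f_k$ distinct initial terms.
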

\begin{proof}
    As usual, after replacing $I$ with $uI$ for $G$-generic $u$, we may assume that $J=\textrm{in}(I)$. Let $a_1,\dots,a_d$ be monomials representing a basis of $\textrm{Im}(\Psi).$ Then for each $a_t$, we have representative monomials $a_{1,t}\in (0:_{M_{j_1}\langle n_{j_1}-i_{j_1} \rangle}x_{i_{j_1},j_1})_{s_{j_1}},\dots,a_{k,t}\in (0:_{M_{j_k}\langle n_{j_k}-i_{j_k}\rangle }x_{i_{j_k},j_k})_{s_{j_k}}$ such that $a_{1,t}\cdots a_{k,t}=a_t$. By Proposition \ref{prop:In Hilbert}, there exists $f_{1,t}\in (0:_{N_{j_1}\langle n_{j_1}-i_{j_1} \rangle}x_{i_{j_1},j_1})_{s_{j_1}},\dots,f_{k,t}\in (0:_{N_{j_k}\langle n_{j_k}-i_{j_k}\rangle }x_{i_{j_k},j_k})_{s_{j_k}}$, where $N_{j_t}\langle n_{j_t}-i_{j_t} \rangle=R_{j_t}/(I_{j_t} + (x_{n_{j_t},j_t},\dots,x_{i_{j_t}+1,j_t}))$, such that $\textrm{in}(f_{1,t})=a_{1,t},\dots,\textrm{in}(f_{k,t})=a_{k,t}$.

    By Proposition \ref{prop:Kosz Hom iso}, there exist cycles of the form $(f_{1,t}\mathbf{e}_{A_1^{1}}+\cdots),\dots,(f_{k,t}\mathbf{e}_{A_1^{k}}+\cdots)$ of the respective Koszul complexes, where $A^t_1 = \{(n_{j_t}-i_{j_t}+1,j_t),\dots,(n_{j_t},j_t) \}$. By Lemma \ref{lem:Koszul Build}, there is a cycle of the form
    \[
    z_t=(f_{1,t}\cdots f_{k,t}\mathbf{e}_{A_1^1\cup \cdots \cup A_1^k}+\cdots)\in K_{\sum_t i_{j_t}}(x_{n_1,1},\dots,x_{1,1},\dots,x_{n_c,c},\dots,x_{1,c};R/I).
    \]
    We show that $z_1,\dots,z_d$ represent linearly independent elements in Koszul homology $$H_{\sum_t i_{j_t}}(x_{n_1,1},\dots,x_{1,1},\dots,x_{n_c,c},\dots,x_{1,c};R/I).$$ If $\sum_{t}\alpha_t z_t$ is a boundary, then $f=\sum_{t}\alpha_t f_{1,t}\cdots f_{k,t}$ lies in $I+(x_{n_{j_t},j_t},\dots,x_{i_{j_t}+1,j_t}\mid 1\leq t\leq k)$, in other words, there exists $h\in (x_{n_{j_t},j_t},\dots,x_{i_{j_t}+1,j_t}\mid 1\leq t\leq k)$ such that $f+h\in I$. Lemma \ref{lem:ordering lem} implies that $\textrm{in}(f+h) = \textrm{in}(f) \in \{a_1,\dots,a_d\}$, which is a contradiction. Therefore, the dimension of $$H_{\sum_t i_{j_t}}(x_{n_1,1},\dots,x_{1,1},\dots,x_{n_c,c},\dots,x_{1,c};R/I)_{\mathbf{t}},$$ where $\mathbf{t}$ is as in the theorem statement, is at least $d$.
    
\end{proof}

Recall for a tuple $\mathbf{t}\in \mathbb{N}^c$, we impose further the following condition on the ordering of the variables: the variables in $\{x_{1,j},\dots,x_{n_j-\mathbf{t}_j,j} \}$ are greater than each variable in $\{x_{n_j-\mathbf{t}_j+1,j},\dots,x_{n_j,j} \}$ for each $j$. We fix a monomial order $>_\mathbf{t}$ that satisfies this ordering condition on the variables, and we define $\Delta_{\mathbf{t}}^{cs}$ to be the colored algebraic shifting with respect to this order.

\bettiLowBound*
\begin{proof}
    Fix a tuple $\mathbf{t}\in \mathbb{N}^c$ such that $\sum_{j=1}^c\mathbf{t}_j=i$ and a face $F\in \Delta_{\mathbf{t}}^{cs}$ such that $|F\cap V_j| = \mathbf{t}_j \textrm{ and } (F\cap V_j)\cup \{x_{n_j,j}\}\notin \Delta_{\mathbf{t}}^{cs}$ for all $j$. Take the reverse lexicographic monomial ordering where the variables in $\{x_{1,j},\dots,x_{n_j-\mathbf{t}_j,j} \}$ are greater than each variable in $\{x_{n_j-\mathbf{t}_j+1,j},\dots,x_{n_j,j} \}$ for each $j$. We apply Theorem \ref{thm:Koszul containment} with $I=I_\Delta$ and $J_{\mathbf{t}}=G\textrm{-GIN}(I_\Delta)$ with respect to the ordering $>_{\mathbf{t}}$. It follows from the Zariski open condition in Theorem \ref{thm:color zariski open} that that there exists $u$ such that $\textrm{in}_{>_\mathbf{t}}(uI) = J_{\mathbf{t}}$ for each such tuple $\mathbf{t}$. Hence, we may replace $I$ with $uI$, and it follows that $J_{\mathbf{t}} = \textrm{in}_{>_{\mathbf{t}}}(I)$ for all $\mathbf{t}$. 
    
    For each $j$, let $a_j$ be the monomial of degree $\mathbf{t}_j$ such that the support of $\tilde{\Phi}(a_j)$ is $F\cap V_j$. Since $(F\cap V_j)\cup \{x_{n_j,j}\}\notin \Delta_{\mathbf{t}}^{cs}$, we have that $a_jx_{n_j-\mathbf{t}_j,j}\in J_{\mathbf{t}}$. In other words, $a_j\in (0:_{M_{j}\langle \mathbf{t}_j \rangle}x_{n_j-\mathbf{t}_j,j})_{\mathbf{t}_{j}}$. It follows from Hochster's Formula that $(n_j-\mathbf{t}_j,n_j)$ is an extremal Betti position of $R_j/(I_\Delta)_j$ and hence of $R_j/(J_{\mathbf{t}})_j$ by Theorem \ref{thm:GINextremalBettEq}. Additionally, $a_1\dots a_c$ represents a nonzero element of $(0:_{M} (x_{n_1-\mathbf{t}_1,1},\dots,x_{n_c-\mathbf{t}_c,c}))_{\mathbf{t}_{1}+\cdots +\mathbf{t}_{c}}$ since $F\in \Delta_{\mathbf{t}}^{cs}$. Therefore, the image of the map
    \[
    (0:_{M_{1}\langle \mathbf{t}_1 \rangle}x_{n_1-\mathbf{t}_1,1})_{\mathbf{t}_{1}}\otimes \cdots \otimes (0:_{M_{c}\langle \mathbf{t}_c \rangle}x_{n_c-\mathbf{t}_c,c})_{\mathbf{t}_{c}} \longrightarrow (0:_{M} (x_{n_1-\mathbf{t}_1,1},\dots,x_{n_c-\mathbf{t}_c,c}))_{\mathbf{t}_{1}+\cdots +\mathbf{t}_{c}}
    \]
    has dimension at least $\left|\{ F\in \Delta_{\mathbf{t}}^{cs}\cap {V \choose \mathbf{t}} \mid (F\cap V_j)\cup \{x_{n_j,j}\}\notin \Delta_{\mathbf{t}}^{cs} \textrm{ for all } j \} \right|$ and lies in the multi-grading $\mathbf{t}$. As in the proof of Theorem \ref{thm:Koszul containment}, $H_{n-i}(x_{n_1,1},\dots,x_{1,1},\dots,x_{n_c,c},\dots,x_{1,c};R/I)_{\mathbf{n}}$ contains a subspace of dimension at least $\left|\{ F\in \Delta_{\mathbf{t}}^{cs}\cap {V \choose \mathbf{t}} \mid (F\cap V_j)\cup \{x_{n_j,j}\}\notin \Delta_{\mathbf{t}}^{cs} \textrm{ for all } j \} \right|$ represented by elements of the form $(f_{\mathbf{t}}\mathbf{e}_{A_{\mathbf{t}}}+\cdots)$, where $A_{\mathbf{t}}=\bigcup_{j=1}^c \{ (\mathbf{t}_j,j),\dots,(n_j,j)\}$ and $f_{\mathbf{t}}\notin I + (x_{n_j,j},\dots,x_{n_j-\mathbf{t}_j+1,j}\mid 1\leq j\leq c)$. We claim that the direct sum of these subspaces is at least $\sum_{\mathbf{t}}\left|\{ F\in \Delta_{\mathbf{t}}^{cs}\cap {V \choose \mathbf{t}} \mid (F\cap V_j)\cup \{x_{n_j,j}\}\notin \Delta_{\mathbf{t}}^{cs} \textrm{ for all } j \} \right|$-dimensional. Indeed, if an element $\sum_{\mathbf{t}}\alpha_{\mathbf{t}}(f_{\mathbf{t}}\mathbf{e}_{A_{\mathbf{t}}}+\cdots)$ is a boundary, then this directly implies that $f_{\mathbf{t}}\in I + (x_{n_j,j},\dots,x_{n_j-\mathbf{t}_j+1,j}\mid 1\leq j\leq c)$, which is of course a contradiction. Since $H_{n-i}(x_{n_1,1},\dots,x_{1,1},\dots,x_{n_c,c},\dots,x_{1,c};R/I)_{\mathbf{n}}$ has dimension $\beta_{i-1}(\Delta)$ by Hochster's Formula, this completes the proof.
\end{proof}

\section{Acknowledgments}
This work was supported by the Institute for Basic Science (IBS-R029-C1). It was also partially supported by the National Science Foundation (NSF) under award no. 2402145.

We are grateful to Jason McCullough for helpful discussions in the early stages of this project. We are also thankful to Andreas Holmsen and Nikola Sadovek for comments and suggestions on an earlier version of this paper.

\bibliographystyle{plain}
\bibliography{bibliography.bib}

\end{document}